\theoremstyle{plain}
\newtheorem{thm}{Theorem}[section]
\newtheorem{prp}[thm]{Proposition}
\newtheorem{lem}[thm]{Lemma}
\newtheorem{cor}[thm]{Corollary}
\newtheorem{clm}{Claim}
\newtheorem*{thm-nn}{Theorem}
\newtheorem*{prp-nn}{Proposition}
\newtheorem*{lem-nn}{Lemma}
\newtheorem*{cor-nn}{Corollary}
\newtheorem*{clm-nn}{Claim}
\newtheorem*{cnj-nn}{Conjecture}
\newtheorem*{prb-nn}{Problem}
\theoremstyle{definition}
\newtheorem{dfn}[thm]{Definition}
\newtheorem{exm}[thm]{Example}
\theoremstyle{remark}
\newtheorem{rmk}[thm]{Remark}
\newtheorem{ntn}[thm]{Notation}
\newcommand{\thmref}[1]{Theorem~\ref{#1}}
\newcommand{\prpref}[1]{Proposition~\ref{#1}}
\def\al{\alpha}
\def\be{\beta}
\def\ga{\gamma}
\def\de{\delta}
\def\ep{\varepsilon}
\def\ze{\zeta}
\def\et{\eta}
\def\th{\theta}
\def\la{\lambda}
\def\ro{\rho}
\def\si{\sigma}
\def\ta{\tau}
\def\up{\upsilon}
\def\ph{\phi}
\def\ps{\psi}
\def\Ga{\Gamma}
\def\La{\Lambda}
\def\om{\omega}
\def\Ph{\Phi}
\def\Ps{\Psi}
\def\top{\operatorname{top}} 
\def\soc{\operatorname{soc}}
\def\Im{\operatorname{Im}}
\def\Hom{\operatorname{Hom}}
\def\Aut{\operatorname{Aut}}
\def\End{\operatorname{End}}
\def\mod{\operatorname{mod}}
\def\Mod{\operatorname{Mod}}
\def\obj{\operatorname{obj}}
\def\prj{\operatorname{prj}}
\def\Prj{\operatorname{Prj}}
\def\thick{\operatorname{thick}}
\def\add{\operatorname{add}}
\def\chr{\operatorname{char}}
\def\Rep{\operatorname{Rep}}
\def\Kb{{\mathcal K}^{\text{\rm b}}}
\def\Db{{\mathcal D}^{\text{\rm b}}}
\def\calB{{\mathcal B}}
\def\calC{{\mathcal C}}
\def\calD{{\mathcal D}}
\def\calI{{\mathcal I}}
\def\calK{{\mathcal K}}
\def\calM{{\mathcal M}}
\def\calP{{\mathcal P}}
\def\calS{{\mathcal S}}
\def\bbZ{{\mathbb Z}}
\def\bbP{{\mathbb P}}
\def\k{\Bbbk}
\def\op{^{\text{op}}}
\def\inv{^{-1}}
\def\implies{\text{$\Rightarrow$}\ }
\def\equivalent{\text{$\Leftrightarrow$}\ }
\def\iso{\cong}
\def\ds{\oplus}
\def\udl{\underline}
\def\ovl{\overline}
\def\Ds{\bigoplus}
\def\dsm#1,#2..#3{\bigoplus_{{#1}={#2}}^{#3}}
\def\sm#1,#2..#3{\sum_{{#1}={#2}}^{#3}}
\def\id{1\kern-.25em{\text{{\rm l}}}} 
\def\isoto{\ \raise.8ex\hbox{$^{\sim}$}\kern-.7em\hbox{$\to$}\ } 
\def\Cdot{\centerdot}
\def\down{_{\Cdot}}
\def\up{^{\Cdot}}
\def\ang#1{{\langle #1 \rangle}}
\def\ya#1{\overset{#1}{\longrightarrow}}
\def\blank{\operatorname{-}}
\def\bg{%
\family{cmr}\size{20}{12pt}\selectfont}
\def\bigzerou{%
\smash{\lower1.7ex\hbox{\bg 0}}}
\def\repr[#1;#2;#3;#4;#5]{
\left(
\begin{matrix}#1\\#2\end{matrix}
#3
\begin{matrix}#4\\#5\end{matrix}
\right)}
\def\pmat#1{\begin{pmatrix} #1 \end{pmatrix}}
\def\colim{\varinjlim}
\numberwithin{equation}{section}
\numberwithin{figure}{section}
\def\C{\mathcal{C}}
\def\Aeq{\operatorname{Aeq}}
\def\sic{\operatorname{sic}}
\def\bas{\operatorname{bas}}
\def\fstorbit{/\!_{_{1}}}
\def\sndorbit{/\!_{_{2}}}
\def\freeorbit{/\!_{_{f}}}
\def\oorbit{/\!_{_{o}}}
\def\Fun{\operatorname{Fun}}
\def\Inv{\operatorname{Inv}}
\def\Fgt{\mathrm{Fgt}}
\begin{document}
\title[A generalization of covering functors
and derived equivalences]{A generalization of Gabriel's Galois covering functors
and derived equivalences}
\author{Hideto Asashiba}

\begin{abstract}
Let $G$ be a group acting on a category $\C$.
We give a definition for a functor $F\colon \C\to \C'$
to be a $G$-covering and three constructions of the orbit category $\C/G$,
which generalizes the notion of a Galois covering
of locally finite-dimensional categories
with group $G$ whose action on $\C$ is free and locally bonded defined by Gabriel.
Here $\C/G$ is defined for any category $\C$ and we do not require that
the action of $G$ is free or locally bounded.
We show that a $G$-covering is a universal ``$G$-invariant''
functor and
is essentially given by the canonical functor $\C\to \C/G$.
By using this we improve a covering technique for derived equivalence.
Also we prove theorems describing the relationships between
smash product construction and the orbit category construction
by Cibils and Marcos (2006)
without the assumption that the $G$-action is free.
The orbit category construction by a cyclic group generated by
an auto-equivalence modulo natural isomorphisms
(e.g., the construction of cluster categories) is justified by
a notion of the ``colimit orbit category''.
In addition, we give a presentation of the orbit category of a category
with a monoid action
by a quiver with relations, which enables us to calculate many
examples.
\end{abstract}

\subjclass[2000]{18A32, 16B50, 16G20}

\thanks{This work is partially supported by
Grant-in-Aid for Scientific Research (C) 17540036 from JSPS}

\maketitle

\section*{Introduction}

Throughout this paper $G$ is a group (except for sections 8, 9) and $\k$ is a commutative ring,
and all categories, functors and algebras are assumed to be $\k$-linear unless otherwise stated.
(Here a category is called a $\k$-linear category (or  a $\k$-category for short)
if its morphism sets are $\k$-modules
and its compositions are $\k$-bilinear, and we do not assume that it is additive.)
A pair $(\calC, A)$ of a category $\calC$ and a group homomorphism
$A\colon G \to \Aut(\calC)$ is called a category with a $G$-{\em action}
or a $G$-{\em category}, where $\Aut(\C)$ is the group of automorphisms
of $\calC$
(not the group of auto-equivalences of $\C$ modulo natural isomorphisms).
We set $A_{\al}:=A(\al)$ for all $\al\in G$. If there is no confusion
we always (except for sections 8, 9) denote $G$-actions
by the same letter $A$, and simply write
$\calC = (\calC, A)$, and further we usually write
$\al x:= A_{\al}x$, $\al f:= A_{\al}f$ for all $x\in \calC$ and
all morphisms $f$ in $\calC$. 

\subsection*{Classical covering technique}
Let $F\colon \calC \to \calC'$ be a functor with $\calC$ a $G$-category.
The classical setting of covering technique (see e.g., \cite{Ga}) required the following conditions:
\begin{enumerate}
\item
$\C$ is {\em basic} (i.e., $x \ne y\ \implies x\not\iso y$); 
\item
$\C$ is {\em semiperfect}
(i.e., $\C(x,x)$ is a local algebra, $\forall x \in \C$);
\item
$G$-action is {\em free} (i.e.,$1\ne \forall \al \in G, \forall x\in \C, \al x \ne x$); and
\item
$G$-action is {\em locally bounded} (i.e., $\forall x, y \in \C, \{\al\in G\mid \C(\al x, y)\ne 0\}$ is finite).
\end{enumerate}
But these assumptions made it very inconvenient to apply
the covering technique to usual additive categories such as
the bounded homotopy category $\Kb(\prj R)$
of finitely generated projective modules
over a ring $R$ or even the module category $\Mod R$ of $R$
because these categories do not satisfy the condition (2)
and hence we have to construct the full subcategory of
indecomposable objects, which destroys additional structures
like a structure of a triangulated category; and
to satisfy the condition (1) we have to choose a complete set
of representatives of isoclasses of objects that should be
stable under the $G$-action, which is not so easy in practice;
and also the condition (3) is difficult to check in many cases,
e.g., even in the case when we use $G$-actions on the category
$\Kb(\prj R)$ or on $\Mod R$ induced from that on $R$.
These made the proof of the main theorem of a covering technique
for derived equivalences in \cite{Asa97} unnecessarily complicated
and prevented wider applications.
The first purpose of this paper is to generalize the covering technique to remove all these
assumptions.

\subsection*{Orbit categories and covering functors}
Recall that to define a so-called ``root category'' $\Db(\mod H)/[2]$ of a hereditary
algebra $H$ over a field in Happel \cite{Ha} or in Peng-Xiao \cite{PX}
we needed a generalization that removes at least conditions (1) and (2).
It seems, however, even such a simple generalization was not found explicitly in the literature
for a long time.
The definition of root categories given in \cite{PX} works only for itself, and does not give
a general definition of orbit categories.
Nevertheless,
their definition was useful to show that the obtained orbit category
is a triangulated category.
This gave us one of the motivations to start this work.
Recently general definitions of orbit categories was given
in \cite{C-M} by Cibils and Marcos
(let us denote it by $\C \fstorbit G$) and
in \cite{Ke} by Keller (in the case that $G$ is cyclic,
let us denote it by $\C \sndorbit G$).
But we still did not understand the relationship
between the notion of covering functors by Gabriel \cite{Ga}
and the orbit categories defined by them.
We wanted to generalize Gabriel's covering technique as much as possible.
To this end it was necessary to generalize the definition of a covering functor.
In the classical setting the first condition
for a functor $F$ to be
a (Galois) covering functor (with group $G$) is that
$F=FA_{\al}$ for all $\al \in G$.
This leads us naturally to a definition of an {\em invariance adjuster},
a family of natural isomorphisms
$\ph:=(\ph_{\al} \colon F \to FA_{\al})_{\al\in G}$
(see Definition \ref{admissible}).
The pair $(F, \ph)$ is called a ({\em right}) 
$G$-{\em invariant}
functor, further which is called
a $G$-{\em covering} functor if $F$ is a dense functor such that
both
$$
\begin{aligned}
F^{(1)}_{x,y}&\colon \Ds_{\al\in G}\calC(\al x, y) \to \calC'(Fx, Fy),
\quad (f_{\al})_{\al\in G}\mapsto\sum_{\al\in G}F(f_{\al})\cdot\ph_{\al,x}, \text{ and}\\
F^{(2)}_{x,y}&\colon \Ds_{\be\in G}\calC(x, \be y) \to \calC'(Fx, Fy),
\quad (f_{\be})_{\be\in G}\mapsto\sum_{\be\in G}\ph_{\be, y}^{-1}\cdot F(f_{\be})
\end{aligned}
$$
are isomorphisms of $\k$-modules for all $x, y \in \calC$.
In fact, it is enough to require that
either $F^{(1)}_{x,y}$ or $F^{(2)}_{x,y}$ is an isomorphism
for each $x, y \in \calC$.
Roughly speaking the definition of $\calC':= \calC \fstorbit G$ (resp.~$\calC':= \calC \sndorbit G$)
yields by setting all the $F^{(1)}_{x,y}$ (resp.~$F^{(2)}_{x,y}$) to be the identities.
In this paper we give a ``left-right symmetric'' construction
of the orbit category $\C/G$ of $\calC$ by $G$,
which is a direct modification of Gabriel's in \cite{Ga},
and give explicit isomorphisms between $\C/G$, $\C \fstorbit G$ and $\C \sndorbit G$
(Proposition \ref{three-orbit-cats}).
If $F$ has the same property
but it is not necessarily a dense functor,
then $F$ is called a $G$-{\em precovering} functor, which is
useful to induce $G$-covering functors by restricting
the target category $\C'$.
Our characterization (Theorem \ref{chrtrz-cov}) of
$G$-covering functors $F \colon \C \to \C'$
combines the universality among  $G$-invariant functors
and an explicit form of $F$ as the canonical functor
$P\colon \C \to \C/G$ up to equivalences.
We will show that the pushdown (defined as in \cite{Ga}) of
a $G$-covering functor induces $G$-precovering functors
between categories of finitely generated modules
(Theorem \ref{pushdown-precov1})
and between homotopy categories of bounded complexes of finitely generated projective modules (Theorem \ref{pushdown-precov}).
This property will be used to show derived equivalences.

\subsection*{Free action assumption and a categorical generalization of
CM-duality}
Now, in \cite{C-M} Cibils and Marcos gave two definitions of orbit categories.
The first one (let us denote it by $\C \freeorbit G$) is defined only if
the $G$-action is free, and the second one is the orbit category
$\C \fstorbit G$ stated above, called the {\em skew category},
which is defined without the free action assumption.
These two constructions coincide up to categorical equivalences if the $G$-action is free.
But they mainly used $\C \freeorbit G$ and treated only the free action case
in their main discussions in \cite[sections 3, 4]{C-M}, where they
recovered Cohen-Montgomery duality (\cite{Co-Mo}) in the categorical setting (section 3),
and described the module category of $\calC$ by that of $\calC/G$,
which generalizes \cite[Theorem 3.2]{Gr} of Green, and conversely
the module category of $\calC/G$ by that of $\calC$ (section 4).
The second purpose of this paper is to show that all the corresponding statements in
\cite[sections 3, 4]{C-M} hold without the free action assumption.
Namely, (a) we show by elementary proofs that the orbit category construction and
the smash product construction are mutual inverses.
This gives us a full categorical generalization of Cohen-Montgomery duality,
and is regarded as a categorical version of
\cite[Theorems 1.3, 2.2]{Be-dual} of Beattie.
In particular, this gives us a way to make $G$-actions free
up to ``$G$-equivariant equivalences'' (liberalization).
This liberalization can be seen as a special type of the formation
of inflated categories defined by Cibils-Solotar \cite{C-S}.
Further (b) we will show again by elementary proofs that the pullup functor
$P\up\colon \Mod(\calC/G) \to \Mod\calC$ (see section 4 for definition)
induces an {\em isomorphism}
from $\Mod (\calC/G)$ to the category $\Mod^{G} \calC$ of
``$G$-invariant modules''
(see Definition \ref{G-inv}),
and the pushdown functor $P\down\colon \Mod\calC \to \Mod(\calC/G)$
(see section 4 for definition) induces an equivalence from
$\Mod \calC$ to the category $\Mod_{G} (\calC/G)$
of $G$-graded modules and degree-preserving morphisms
(see Definition \ref{graded-mod-hmg}).
The latter gives a generalization of a categorical version of \cite[Theorem 2.6]{Be-gen} of Beattie.
We note that the definition of smash products given in \cite{C-M} is
easy to handle and very useful, and
that we can regard it as a categorical version of the definition of smash products by Quinn \cite{Qu} (when the group is finite), and
it enables us to formulate the covering construction by Green \cite{Gr}, and
recovers the usual smash product of a $\k$-algebra and the $\k$-dual of a group algebra.

In this paper we formulated the categorical version of Cohen-Montgomery duality
as much as possible in the scope of categories.
A more precise investigation needs the notions of
2-categories, 2-functors, etc.,
which will be done in the subsequent paper \cite{Asa09}.

\subsection*{Lax action of a cyclic group}
In \cite{Ke} Keller defined the orbit category $\C \sndorbit G$
only when $G$ is cyclic.
This seems to be mainly because he only needed to construct
an orbit category by a cyclic group generated by an {\em auto-equivalence} $S$ of $\calC$
modulo natural isomorphisms.
As he remarked there, by replacing both $\calC$ and $S$ in a standard way
by a category $\calC'$ and an {\em automorphism} $S'$ of $\calC'$, respectively, we
can form the orbit category $\calC' \sndorbit \ang{S'}$, which he denoted by
$\calC/S$ by abuse of notation and call it the orbit category of
$\calC$ by $S$.
It seems dangerous to forget this remark 
to identify simply as $\calC = \calC'$ and $S = S'$, and
to use the same formula for the definition of $\calC/S$
as if $S$ were an automorphism of $\calC$,
which is not well-defined.
The third purpose of this paper is to give a definition of the orbit category $\calC/S$
directly by replacing neither $\calC$ nor $S$.
More precisely, 
it is known that there are at least two standard ways of replacing the pair $(\calC, S)$.
One way is to replace $\calC$
by a full subcategory consisting of a complete list of
representatives of isoclasses of objects in $\calC$.
Another way is to replace $\calC$ by a category containing more
objects as done in Keller and Vossieck \cite{K-V}.
We realized that the second construction has a form $\calC_{/S}\# \bbZ$ of the
smash product of a $\bbZ$-graded category $\calC_{/S}$ (called
the ``colimit orbit category'' of $\calC$ by $S$)
and the group $\bbZ$.
Applying the generalization of Cohen-Montgomery duality above
we see that the orbit category $\calC/S$
is justified by using the colimit orbit category $\calC_{/S}$. 
When $S$ is an automorphism, of course we have $\calC_{/S}=\calC/\ang{S}$.

The treatment above seems to be the most handy one for the cyclic group case.
For an arbitrary group, we remark that there is another approach to this problem which can be
used even for all categories (not necessarily $\k$-linear ones) instead of groups. 
Namely, when $S$ is an isomorphism, $S$ gives us a functor
$X\colon \bbZ \to \k\text{-}\mathbf{Cat}$ with
$X(*) = \calC$ and $X(1) = S$,
where $\bbZ$ is regarded as a category
with a single object $*$ and with a composition given by the addition,
and $\k\text{-}\mathbf{Cat}$ is the category of all small $\k$-categories.
Even when $S$ is an auto-equivalence, we can define a so-called lax functor
$X\colon \bbZ \to \k\text{-}\mathbf{Cat}$ generalizing the construction above
using the 2-categorical structure of $\k\text{-}\mathbf{Cat}$,
and we can define $\calC/S$ to be the $\k$-linear version of
the so-called Grothendieck construction of $X$ (take direct sums of $\k$-modules
instead of disjoint unions).
This point of view is essential in the forthcoming paper \cite{Asa10}.

\subsection*{Computation by quivers with relations}
Finally, we give a way to compute the first orbit category
$\C \fstorbit G$ using a quiver with relations to apply
theorems in preceding sections.
We generalize it to the monoid case to include a computation of preprojective algebras,
with a hope to have wider applications.

\subsection*{Contents}
The paper is organized as follows.
In section 1, we give a definition of $G$-covering functors
as  $G$-invariant functors with some isomorphism conditions.
In section 2, we construct orbit categories and canonical functors.
Using their universality we prove Theorem \ref{chrtrz-cov},
which will be used to prove the fundamental theorem of
a covering technique for derived equivalences (Theorem \ref{fundamental-thm}) in section 4.
In section 3, we introduce skew group categories in a general setting
as done in the finite group case by Reiten and Riedtmann \cite{RR}.
In section 4, we develop a covering technique for derived equivalences
in our general setting.
In section 5, we prove results in \cite[section 3]{C-M}
without the assumption that the $G$-action is free.
In section 6, we prove the results in \cite{C-M} (Theorems 4.3 and 4.5)
without this free action assumption.
In section 7, we justify the orbit category construction of a category
by a cyclic group generated by an auto-equivalence modulo natural
isomorphisms by introducing a notion of a colimit orbit category.
In section 8, we give a way to compute the first orbit category
$\C \fstorbit G$ using a quiver with relations.
In section 9, we give some examples to illustrate the contents in previous sections,
and include a way to construct a self-injective algebra having a permutation $\si$
as its Nakayama permutation for any given $\si$, which answers a question posed by Oshiro.

In the sequel, the notation $\de_{\al,\be}$ stands for
the Kronecker delta, namely
it has the value 1 if $\al=\be$, and the value 0 otherwise.
By $\calC \simeq \calC'$ (resp.~$\calC \iso \calC'$)
we denote the fact that $\calC$ and $\calC'$
are equivalent (resp.~isomorphic).
\section{Covering functors}

Throughout this section $F\colon \calC \to \calC'$ is a functor
with $\calC$ a $G$-category.
\begin{dfn}
\label{admissible}
An {\em invariance adjuster} of $F$ is a family
$\ph:=(\ph_{\al})_{\al\in G}$ of natural isomorphisms
$\ph_{\al}\colon F\to FA_{\al}$ ($\al\in G$) such that
\begin{enumerate}
\item
$\ph_{1}=\id_{F}$; 
(in fact, this is superfluous, see Remark \ref{superfluous-cond}) and
\item
The following diagram is commutative
for each $\al,\be\in G$:
$$
\xymatrix{
F\ar[r]^{\phi_{\al}}\ar[rd]_{\phi_{\be\al}} & FA_{\al}\ar[d]^{\phi_{\be}A_{\al}}\\ & FA_{\be\al}=FA_{\be}A_{\al},
}
$$
\end{enumerate}
and the pair $(F,\ph)$ is called a
({\em right}) $G$-{\em invariant} functor.

For $G$-invariant functors $(F,\ph)\colon \calC \to \calC'$
and $(F',\ph')\colon \calC \to \calC'$,
a morphism $(F,\ph) \to (F',\ph')$ is a natural transformation
$\et\colon F \to F'$ such that for each $\al\in G$ the following
diagram commutes:
$$
\begin{CD}
F @>{\ph_{\al}}>> FA_{\al}\\
@V{\et}VV @VV{\et A_{\al}}V\\
F' @>>{\ph'_{\al}}> F'A_{\al}.
\end{CD}
$$
\end{dfn}

\begin{rmk}\label{superfluous-cond}
Assume that $\ph:=(\ph_{\al})_{\al\in G}$
in the definition satisfies the condition (2), and let
$x\in \C$ and $\al\in G$.
Then since $\ph_{1,x}:=\ph_{1}x$ is an isomorphism,
the equalities $\ph_{1,x}\ph_{1,x}=\ph_{1,x}$
and $\ph_{\al,x}\ph_{\al\inv,\al x}=\ph_{1,x}$ show the following:
$$\ph_{1,x}=\id_{Fx},
\quad\text{and}\quad \ph_{\al,x}^{-1}=\ph_{\al^{-1},\al x}.$$
Namely, the condition (1) automatically follows from (2).
\end{rmk}

\begin{ntn}
All $G$-invariant functors $\calC \to \calC'$ and all morphisms
between them form a category, which we denote by $\Inv(\calC, \calC')$.
\end{ntn}

\begin{lem}\label{invariant-compn}
Let $F=(F,\ph)$
be a $G$-invariant functor, and $H\colon \calC' \to \calC''$ a functor.
Then $(HF, H\ph)\colon \calC \to \calC''$ is a $G$-invariant functor,
where $H\ph:= (H\ph_{\al})_{\al\in G}$.
\end{lem}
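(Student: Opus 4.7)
The plan is to verify the two parts of the definition of a $G$-invariant functor for the pair $(HF, H\ph)$, where by Remark~\ref{superfluous-cond} condition (1) is redundant, so the only real task is to check that each $H\ph_{\al}$ is a natural isomorphism $HF \to HFA_{\al}$ and that the cocycle triangle commutes.

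First I would argue that each $H\ph_{\al}$ is a natural isomorphism. This is pure whiskering: for $\ph_{\al}\colon F \to FA_{\al}$ a natural transformation, $(H\ph_{\al})_{x} := H(\ph_{\al,x})$ is natural in $x$ because $H$ preserves composition, and it is an isomorphism at every object because any functor sends isomorphisms to isomorphisms (using $H(\ph_{\al,x})H(\ph_{\al,x}^{-1}) = H(\id) = \id$). Hence $H\ph_{\al}\colon HF \to HFA_{\al}$ is indeed a natural isomorphism for each $\al \in G$.

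Next I would verify condition (2). Fix $\al,\be \in G$ and an object $x \in \C$. The commutativity of the triangle in Definition~\ref{admissible} for $\ph$ evaluated at $x$ gives
$$
\ph_{\be\al, x} = (\ph_{\be} A_{\al})_{x} \circ \ph_{\al, x} = \ph_{\be, A_{\al}x}\circ \ph_{\al, x}.
$$
Applying the functor $H$ to this equality in $\calC'$ yields
$$
H\ph_{\be\al, x} = H(\ph_{\be, A_{\al}x}) \circ H(\ph_{\al, x}) = (H\ph_{\be})_{A_{\al}x}\circ (H\ph_{\al})_{x},
$$
which is precisely the commutativity of the corresponding triangle for $H\ph$ at $x$. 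The key, and only, identification used here is the equality of whiskerings $H(\ph_{\be}A_{\al}) = (H\ph_{\be})A_{\al}$ as natural transformations $HFA_{\al}\to HFA_{\be\al}$, which holds componentwise on the nose since both sides are $H(\ph_{\be, A_{\al}x})$ at $x$.

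There is essentially no serious obstacle: the whole statement is a functoriality exercise, and the only small point to be careful about is the identification of $H$ applied to a whiskered natural transformation with the whiskered transformation $H\ph_{\be}\cdot A_{\al}$, which is immediate from the componentwise formulas. Combining the two observations, $(HF, H\ph)$ satisfies (1) (automatically by Remark~\ref{superfluous-cond}) and (2) of Definition~\ref{admissible}, so it is a $G$-invariant functor as claimed.
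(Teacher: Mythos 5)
Your proof is correct and is exactly the routine verification the paper has in mind (its own proof is just the word ``Straightforward''): whiskering preserves natural isomorphisms, and applying $H$ to the componentwise cocycle identity for $\ph$ gives the one for $H\ph$.
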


\begin{proof}
Straightforward.
\end{proof}

\begin{ntn}
Let $F=(F,\ph)$
be a  $G$-invariant functor, and let $x,y\in \C$.
Then we define homomorphisms
$F_{x,y}^{(1)}$ and $F_{x,y}^{(2)}$
of $\k$-modules as follows:
\begin{align*}
F_{x,y}^{(1)}&\colon\bigoplus_{\al\in G}\C(\al x,y)\to \C'(Fx,Fy),\ (f_{\al})_{\al\in G}\mapsto\sum_{\al\in G}F(f_{\al})\cdot\ph_{\al,x};\\
F_{x,y}^{(2)}&\colon\bigoplus_{\be\in G}\C(x,\be y)\to \C'(Fx,Fy),\ (f_{\be})_{\be\in G}\mapsto\sum_{\be\in G}\ph_{\be^{-1},\be y}\cdot F(f_{\be}).
\end{align*}
\end{ntn}

\begin{prp}
Let $F=(F,\ph)$
be a  $G$-invariant functor, and let $x,y\in \C$.
Then $F_{x,y}^{(1)}$ is an isomorphism if and only if
$F_{x,y}^{(2)}$ is.
\end{prp}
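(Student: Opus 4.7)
The plan is to construct an explicit $\k$-linear isomorphism
$\Phi\colon \bigoplus_{\al\in G}\C(\al x,y)\to \bigoplus_{\be\in G}\C(x,\be y)$
such that $F^{(1)}_{x,y}=F^{(2)}_{x,y}\circ\Phi$; the equivalence of invertibilities will then be immediate. The natural candidate is obtained by using that each $A_{\be}\in\Aut(\C)$ restricts to a bijection $\C(\be\inv x,y)\to \C(x,\be y)$ sending $f\mapsto \be f$. Accordingly I define $\Phi$ componentwise by $(f_{\al})_{\al}\mapsto (g_{\be})_{\be}$ with $g_{\be}:=\be f_{\be\inv}$. Since each component of $\Phi$ is a $\k$-linear bijection, $\Phi$ itself is a $\k$-linear isomorphism (with inverse $(g_{\be})_{\be}\mapsto (\al\inv g_{\al\inv})_{\al}$).

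The core computation is to verify that $F^{(2)}_{x,y}\circ\Phi=F^{(1)}_{x,y}$ summand by summand, i.e.\ that for each $\al\in G$ (and $\be:=\al\inv$),
$$\ph_{\be\inv,\be y}\cdot F(\be f_{\be\inv})=F(f_{\al})\cdot\ph_{\al,x}.$$
The key step is to apply naturality of the transformation $\ph_{\be}\colon F\to FA_{\be}$ at the morphism $f_{\be\inv}\colon \be\inv x\to y$, giving
$\ph_{\be,y}\cdot F(f_{\be\inv})=F(\be f_{\be\inv})\cdot \ph_{\be,\be\inv x}$.
Rearranging and premultiplying by $\ph_{\be\inv,\be y}$, the product $\ph_{\be\inv,\be y}\ph_{\be,y}$ collapses to the identity via the formula $\ph_{\be,y}^{-1}=\ph_{\be\inv,\be y}$ from Remark \ref{superfluous-cond}, while the other trailing factor simplifies by the same remark: $\ph_{\be,\be\inv x}^{-1}=\ph_{\be\inv,x}=\ph_{\al,x}$. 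This yields exactly the desired identity.

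Summing over $\al\in G$ then gives $F^{(1)}_{x,y}=F^{(2)}_{x,y}\circ\Phi$, and since $\Phi$ is a $\k$-linear isomorphism, $F^{(1)}_{x,y}$ is an isomorphism iff $F^{(2)}_{x,y}$ is. The only nontrivial point is the bookkeeping with the inversion identities for the components of $\ph$; once those are in hand, the proof is a direct application of naturality.
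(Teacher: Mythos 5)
Your proof is correct and is essentially the paper's own argument: the paper establishes the same identity $F^{(1)}_{x,y}=F^{(2)}_{x,y}\circ\Phi$ via a commutative diagram whose left column is the composite of the relabelling $(f_{\al})_{\al}\mapsto(f_{\al\inv})_{\al}$ with the componentwise application of $A_{\al}$, which is exactly your $\Phi$. You merely spell out the naturality-plus-inversion computation that the paper leaves implicit in asserting the diagram commutes.
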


\begin{proof}
This follows from the following commutative diagram
$$
\xymatrix{
\Ds_{\al\in G}\C(\al x, y) \ar[r]^{F_{x,y}^{(1)}}
\ar[d]_{t}^{\wr}& \C'(Fx,Fy) \ar@{=}[dd]\\
\Ds_{\al\in G}\C(\al\inv x, y) \ar[d]_{(\al)_{\al\in G}}^{\wr}\\
\Ds_{\al\in G}\C(x, \al y) \ar[r]_{F_{x,y}^{(2)}} &\C'(Fx, Fy),
}
$$
where $t$ is defined by
$t((f_{\al})_{\al\in G}):= (f_{\al\inv})_{\al\in G}$,
which is clearly an isomorphism of $\k$-modules.
\end{proof}

\begin{dfn}
Let $F=(F, \ph)$ be a  $G$-invariant functor. Then

(1) $F=(F,\ph)$ is called a $G$-\emph{precovering} if for any
$x,y\in \C$ the $\k$-homomorphism $F_{x,y}^{(1)}$ is an isomorphism
(equivalently, if $F_{x,y}^{(2)}$ is an isomorphism).

(2) $F=(F,\ph)$ is called a $G$-\emph{covering} if $F$ is a
$G$-precovering and $F$ is \emph{dense}, in the sense that for any
$x'\in \C'$ there exists an $x\in \C$ such that $x'$ is isomorphic
to $Fx$ in $\C'$.
\end{dfn}

\section{Orbit categories}

\begin{dfn}
The \emph{orbit category} $\C/G$ of $\C$ by $G$ is defined as follows.

(1) The class of objects of $\C/G$ is equal to that of $\C$.

(2) For each $x,y\in \C/G$ we set\[
(\C/G)(x,y):=\left(\Pi'(x,y)\right)^{G},\]
where \[
\Pi'(x,y):=\{ f=(f_{\be,\al})_{(\al,\be)}\in\prod_{(\al,\be)\in G\times G}\C(\al x,\be y)\mid f\textrm{ is row finite and column finite}\},\]
and $(\textrm{-})^{G}$ stands for the set of $G$-invariant elements,
namely\[
\left(\Pi'(x,y)\right)^{G}:=\{(f_{\be,\al})_{(\al,\be)}\in\Pi'(x,y)\mid\forall\ga\in G,f_{\ga\be,\ga\al}=\ga(f_{\be,\al})\}.\]
In the above, $f$ is said to be \emph{row finite} (resp. \emph{column
finite}) if for any $\al\in G$ the set $\{\be\in G\mid f_{\al,\be}\ne 0\}$
(resp. $\{\be\in G\mid f_{\be,\al}\ne0\}$) is finite.

(3) For any composable morphisms $x\ya{f}y\ya{g}z$ in $\C/G$ we set\[
gf:=\left(\sum_{\ga\in G}g_{\be,\ga}\cdot f_{\ga,\al}\right)_{(\al,\be)\in G\times G}\in(\C/G)(x,z).\]
\end{dfn}

\begin{rmk}\label{oldorbit}
(1) In the usual definition of the orbit category,
one sets $\obj(\calC/G):= \{Gx\mid x\in \obj(\calC)\}$,
where $Gx:=\{\al x\mid \al\in G\}$ for all $x\in \calC$.
But this makes a trouble when $G$-action is not free.
This was changed as in (1) above, which enabled us to remove the
classical assumption that the $G$-action is free.
Nevertheless note that if the $G$-action is free, we can define another orbit category
$\calC\oorbit G$ by setting
$$
\obj(\calC\oorbit G):= \{Gx\mid x\in \obj(\calC)\}
$$
as usual, and for each $x, y \in \calC$
$$
(\calC\oorbit G)(Gx, Gy):=\{ f=(f_{b,a})\in\prod_{(a,b)\in Gx\times Gy}\C(a,b)\mid f\textrm{ is row finite and column finite}\}^{G}
$$
with the similar composition as above.

(2) As in (2) above, by considering only row finite and column finite
matrices we could remove the classical assumption
that the $G$-action is locally bounded.
But if we further require the condition that the Hom-spaces
$(\calC/G)(x,y)$ are finitely generated $\k$-modules,
we need this locally bounded action assumption again.
\end{rmk}

\begin{prp}
$\C/G$ is a $\k$-category.
\end{prp}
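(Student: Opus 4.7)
The plan is to check, in order, the four things needed to make $\C/G$ into a $\k$-linear category: (i) the composition formula in (3) is a well-defined element of $\prod_{(\al,\be)}\C(\al x,\be z)$; (ii) it is again row and column finite; (iii) it is $G$-invariant, so it lies in $(\C/G)(x,z)$; (iv) composition is $\k$-bilinear, associative, and admits identities.

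For (i), fix $(\al,\be)$. Since $f$ is column finite, the set $\{\ga\in G\mid f_{\ga,\al}\ne 0\}$ is finite, so the sum $\sum_{\ga\in G} g_{\be,\ga}\cdot f_{\ga,\al}$ has only finitely many nonzero terms and hence is a well-defined morphism in $\C(\al x,\be z)$. For (ii), fix $\al$; column finiteness of $f$ restricts $\ga$ to a finite set $S_\al$, and for each $\ga\in S_\al$ column finiteness of $g$ restricts $\be$ to a finite set, so $\{\be\mid (gf)_{\be,\al}\ne 0\}$ is finite. A symmetric argument using row finiteness of $g$ and $f$ shows $gf$ is row finite.

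For (iii), let $\ga'\in G$ and substitute $\ga=\ga'\de$:
\[
(gf)_{\ga'\be,\ga'\al}=\sum_{\ga\in G}g_{\ga'\be,\ga}f_{\ga,\ga'\al}=\sum_{\de\in G}g_{\ga'\be,\ga'\de}\cdot f_{\ga'\de,\ga'\al}=\sum_{\de\in G}\ga'(g_{\be,\de})\cdot\ga'(f_{\de,\al})=\ga'((gf)_{\be,\al}),
\]
using that each $A_{\ga'}$ is a $\k$-linear functor, hence preserves composition and finite sums. For (iv), define $\id_x\in(\C/G)(x,x)$ by $(\id_x)_{\be,\al}:=\de_{\al,\be}\id_{\al x}$; this is trivially row and column finite and $G$-invariant because $A_{\ga'}$ fixes identity morphisms. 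A direct check gives $\id_y\cdot f=f=f\cdot\id_x$. Associativity of composition follows from the identity
\[
((hg)f)_{\ga,\al}=\sum_{\ep,\de\in G} h_{\ga,\ep}\cdot g_{\ep,\de}\cdot f_{\de,\al}=(h(gf))_{\ga,\al},
\]
where the double sum is finite by the previous finiteness arguments and rearrangement is legal; $\k$-bilinearity of composition and the $\k$-module structure on $(\C/G)(x,y)$ inherited from the product are immediate.

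The only genuine subtlety is making sure that all the finiteness bookkeeping lines up so that the defining sums for composition and for associativity are actually finite, and that the substitution $\ga\mapsto\ga'\de$ in step (iii) is legitimate as a reindexing of a row and column finite family; everything else is routine transport of the ring/category axioms from $\C$ to the matrix construction.
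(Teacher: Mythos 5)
Your proof is correct and follows the only natural route, which is the same one the paper takes: it simply carries out in full the verifications (finiteness of the composition sums, closure under row/column finiteness, $G$-invariance of the composite, identities, associativity, bilinearity) that the paper's proof states the identity formula for and then explicitly leaves to the reader. The finiteness bookkeeping and the reindexing $\ga\mapsto\ga'\de$ are handled correctly.
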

\begin{proof}
For each $x\in \calC$ the identity $\id_{x}$ in $\C/G$ is given by
\begin{equation}\label{identity}
\id_{x}=(\delta_{\al,\be}\id_{\al x})_{\al,\be\in G}.
\end{equation}
The rest is easy to verify and is left to the reader.
\end{proof}

\begin{dfn}
The {\em canonical} functor $P\colon \C \to \C/G$ is defined by
$P(x):=x$, and $P(f):=(\de_{\al,\be}\,\al f)_{(\al,\be)}$ for all $x, y \in \C$ and all $f\in \C(x,y)$.
\end{dfn}

\begin{dfn}
For each $\mu\in G$ and each $x \in \C$ define
$\ph_{\mu, x}:= (\de_{\al, \be\mu}\id_{\al x})_{(\al,\be)}\in (\C/G)(P x, P \mu x)$, and set
$\ph_{\mu}:=(\ph_{\mu,x})_{x\in \C}\colon P \to P A_{\mu}$.
Then $\ph:= (\ph_{\mu})_{\mu\in G}$ is an invariance adjuster of $P$, and
hence $P=(P,\ph)$ is a  $G$-invariant functor.
\end{dfn}

\begin{prp}\label{pi-universal}
$P=(P, \ph) \colon \C \to \C/G$ has the following properties.
\begin{enumerate}
\item
$P=(P, \ph)$ is a $G$-covering functor;
\item
$P=(P, \ph)$ is {\em universal} among  $G$-invariant functors
from $\C$, namely,
for each  $G$-invariant functor $E=(E,\ps) \colon \calC \to \calC'$,
there exist a unique $($up to isomorphism$)$ functor
$H\colon \C/G \to \C'$ such that
$(E,\ps) \iso (HP,H\ph)$ as  $G$-invariant functors; and
\item
$P=(P, \ph)$ is {\em strictly universal} among  $G$-invariant functors
from $\C$, namely,
for each  $G$-invariant functor $E=(E,\ps) \colon \calC \to \calC'$,
there exist a $($really$)$ unique functor
$H\colon \C/G \to \C'$ such that $(E,\ps) = (HP,H\ph)$.
\end{enumerate}
\end{prp}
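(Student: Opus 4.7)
The plan is to prove (1) by exhibiting an inverse to $P^{(1)}_{x,y}$ and observing that density is automatic, then to prove (3) by explicitly constructing $H$, and finally to deduce (2) as a weakening of (3).

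\textbf{For part (1).} First, $P$ is the identity on the class of objects, so it is automatically dense, and only the precovering condition needs work. The key observation I would exploit is that the $G$-invariance condition $f_{\ga\be,\ga\al}=\ga(f_{\be,\al})$ together with row finiteness forces every $f\in(\C/G)(x,y)$ to be determined by its ``first row'' $(f_{1,\al})_{\al\in G}\in \bigoplus_{\al\in G}\C(\al x,y)$, via the reconstruction formula $f_{\be,\al}=\be(f_{1,\be^{-1}\al})$. A direct computation using the explicit formulas $P(g)_{\nu,\mu}=\de_{\mu,\nu}(\mu g)$ and $(\ph_{\al,x})_{\be,\al'}=\de_{\al',\be\al}\id_{\al' x}$ then shows that $P^{(1)}_{x,y}((f_\al)_\al)$ has $(\be,\al')$-entry equal to $\be(f_{\be^{-1}\al'})$, matching this formula. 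Hence ``read off the first row'' provides a two-sided inverse (after the easy check that the matrix so produced is automatically $G$-invariant and row/column-finite).

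\textbf{For part (3).} Part (1) furnishes the canonical decomposition $f=\sum_\al P(f_{1,\al})\cdot \ph_{\al,x}$, so the requirement $(E,\ps)=(HP,H\ph)$ leaves no choice but to set
\[
H(x):=E(x),\qquad H(f):=\sum_{\al\in G}E(f_{1,\al})\cdot \ps_{\al,x},
\]
a finite sum by row finiteness. This formula simultaneously supplies both the existence and the strict uniqueness of $H$; what remains is to verify that the prescription really is a functor and satisfies the two compatibilities $HP=E$ and $H\ph_\mu=\ps_\mu$. Preservation of identities follows from $(\id_x)_{1,\al}=\de_{\al,1}\id_x$ together with $\ps_{1,x}=\id_{Ex}$ (Remark~\ref{superfluous-cond}), while the two compatibilities reduce to plugging $P(g)_{1,\al}=\de_{\al,1}g$ and $(\ph_{\mu,x})_{1,\al}=\de_{\al,\mu}\id_{\mu x}$ into the defining formula.

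\textbf{For part (2) and the main obstacle.} Part (2) is a weakening of (3); alternatively, given any $H'$ with an isomorphism $\et\colon (E,\ps)\to(H'P,H'\ph)$, the formula $\tilde\et_x:=\et_x$ combined with the same decomposition $f=\sum_\al P(f_{1,\al})\ph_{\al,x}$, the naturality of $\et$, and its $\ph$-compatibility transports the strictly universal $H$ to $H'$. The one real calculation, which I expect to be the main obstacle, is showing that $H$ as constructed in (3) preserves composition. Starting from $(gf)_{1,\al}=\sum_\ga g_{1,\ga}\cdot\ga(f_{1,\ga^{-1}\al})$, I would expand $H(g)\cdot H(f)$, use the naturality of each $\ps_\de$ to rewrite $\ps_{\de,y}\cdot E(f_{1,\al})$ as $E(\de f_{1,\al})\cdot \ps_{\de,\al x}$, collapse $\ps_{\de,\al x}\cdot \ps_{\al,x}=\ps_{\de\al,x}$ via the cocycle identity in Definition~\ref{admissible}, and finally perform the substitution $\al'=\de\al$ to recognise the resulting double sum as $H(gf)$. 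The remaining details are routine bookkeeping with the matrix formulas for $\C/G$.
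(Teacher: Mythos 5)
Your proposal is correct and follows essentially the same route as the paper: density of $P$ is immediate, $P^{(1)}_{x,y}$ is inverted by reading off the first row (the paper's $S^{(1)}_{x,y}$, justified by the reconstruction formula $f_{\be,\al}=\be(f_{1,\be^{-1}\al})$), $H$ is defined by $H(f)=\sum_{\al}E(f_{1,\al})\ps_{\al,x}$, compositionality is checked via naturality of $\ps_{\de}$ plus the cocycle identity, and uniqueness up to isomorphism is obtained by transporting along the comparison isomorphism $\et$. The only cosmetic difference is that you get strict uniqueness in (3) directly from the forced decomposition $f=\sum_{\al}P(f_{1,\al})\ph_{\al,x}$ rather than, as the paper does, as the special case $\et_x=\id$ of the argument for (2); both are fine, and you correctly note that the up-to-isomorphism uniqueness in (2) still requires the separate transport argument rather than being a formal consequence of (3).
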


\begin{proof}
(1) By definition $P$ is dense.
Let $x, y \in \calC$. We have only to show that
$$
P^{(1)}_{x,y}\colon \Ds_{\al\in G}\calC(\al x,y)
\to (\calC/G)(x,y)
$$
is an isomorphism of $\k$-modules.
By definitions of $P$ and $\ph$ a direct calculation shows that
\begin{equation}
\label{form-pi1}
P^{(1)}_{x,y}((f_{\al})_{\al})= (\mu(f_{\mu\inv\la}))_{(\la,\mu)}
\end{equation}
for all $f=(f_{\al})_{\al}\in \Ds_{\al\in G}\calC(\al x,y)$.
Now define a $\k$-homomorphism
$$
S^{(1)}_{x,y}\colon (\calC/G)(x,y) \to \Ds_{\al\in G}\calC(\al x,y)
$$
by $S^{(1)}_{x,y}((f_{\be,\al})_{(\al,\be)}):=(f_{1,\al})_{\al}$,
which is easily seen to be the inverse of $P^{(1)}_{x,y}$
by using the equality \eqref{form-pi1},
and hence $P^{(1)}_{x,y}$ is an isomorphism.

(2) and (3)
Let $E=(E,\ps)\colon \calC \to \calC'$ be a  $G$-invariant functor.
Define a functor $H\colon \calC/G \to \calC'$ as follows.
For each $x, y \in \calC/G$ and
each $f=(f_{\be,\al})_{(\al,\be)}\in (\calC/G)(x,y)$,
let $H(x):=E(x)$ and
$H(f):=(E^{(1)}_{x,y}S^{(1)}_{x,y})(f)
=\sum_{\al\in G} E(f_{1,\al})\ps_{\al,x}$.
Then we have a commutative diagram
$$
\xymatrix{
\Ds_{\al\in G}\calC(\al x,y) \ar[rr]^{E^{(1)}_{x,y}}
\ar[rd]_{P^{(1)}_{x,y}}&&\calC'(Ex, Ey)\\
&(\calC/G)(x,y).\ar[ru]_H
}
$$
We show that $H$ is a functor.
First for each $x\in \calC/G$,
using \eqref{identity} and the definition of $H$,
a direct calculation shows that
$H(\id_x)=E(\id_x)$.
Next, let $x\ya{f} y \ya{g} z$ be composable morphisms in $\calC/G$.
Then using the naturality of $\ps_{\be}$ ($\be\in G$)
and the fact that $\ps$ is an invariance adjuster,
we have
$H(g)H(f)=\sum_{\al,\be\in G}E(g_{1,\be})E(f_{\be,\be\al})\ps_{\be\al,x}$,
the right hand side of which is easily seen to be equal to $H(gf)$.
Therefore $H(gf)=H(g)H(f)$.
Further, the $\k$-linearity of $H$ is clear from definition, and hence
$H$ is a functor.

Next let $\si\colon \calC(x,y) \to \Ds_{\al\in G}\calC(\al x, y)$
be the inclusion (more precisely, it is defined by
$\si(f):=(\de_{1,\al}f)_{\al}$ for all $f\in \calC(x,y)$).
Then as easily seen $P = P^{(1)}_{x,y}\si$ and
$E =E^{(1)}_{x,y}\si$.  Thus the commutative diagram above shows that
$E=HP$ (the equality on objects is clear from definitions).
Further the definitions of $H$ and $\ph$ also show that $H\ph=\ps$.
Hence $(E,\ps) = (HP,H\ph)$.
This shows the existence of $H$ in both (2) and (3).

Finally, we show the uniqueness of $H$ in the sense of (2).
Assume that there is a functor $H'\colon \calC/G \to \calC'$
such that $(E,\ps) \iso (H'P,H'\ph)$.
Then there is a natural isomorphism $\et\colon E \to H'P$ such that
for each $\al \in G$ the following diagram commutes: 
\begin{equation}\label{compati-et}
\begin{CD}
E @>{\ps_{\al}}>> E A_{\al}\\
@V{\et}VV @VV{\et A_{\al}}V\\
H'P @>>{H'\ph_{\al}}> H'PA_{\al}
\end{CD}
\end{equation}
We have to show that there is a natural isomorphism between
$H$ and $H'$.
Now for each $x\in \calC$ we have an isomorphism
$\et_x\colon Hx=Ex \to H'P x = H'x$.
Using this define a family $\ze$ of isomorphisms by $\ze:=(\et_x)_x$.
Then this gives a desired natural isomorphism $\ze\colon H \to H'$.
Indeed, let $f:= (f_{\be,\al})_{(\al,\be)}\colon x \to y$ be in $\calC/G$.
It is enough to show the commutativity of the following diagram:
\begin{equation}\label{necessary-comm}
\begin{CD}
Hx @>{\et_x}>> H'x\\
 @V{H(f)}VV  @VV{H'(f)}V\\
Hy @>>{\et_y}> H'y
\end{CD}
\end{equation}
First, for each $\al \in G$ the naturality of $\et$ gives us the following:
$$
\et_yE(f_{1,\al})=H'P(f_{1,\al})\et_{\al x}
$$
Next, \eqref{compati-et} shows the following.
$$
\et_{\al x}\ps_{\al,x}=H'(\ph_{\al,x})\et_x
$$
Using these equalities in this order we have
$$
\begin{aligned}
\et_y H(f)&= \sum_{\al\in G}\et_y E(f_{1,x})\ps_{\al,x}\\
&= \sum_{\al\in G}H'P(f_{1,\al})\et_{\al x}\ps_{\al,x}\\
&= \sum_{\al\in G}H'P(f_{1,\al})H'(\ph_{\al,x})\et_x\\
&= H'(\sum_{\al\in G}P(f_{1,\al})\ph_{\al,x})\et_x\\
&=H'(P^{(1)}_{x,y}S^{(1)}_{x,y}(f))\et_x \\
&=H'(f)\et_x,
\end{aligned}
$$
which shows the commutativity of \eqref{necessary-comm}.

The uniqueness of $H$ in the sense of (3) follows
from the argument above as a special case that
$\et_{x} = \id_{Hx}$ for all $x \in \calC/G$. 
\end{proof}

The following was pointed out by B.~Keller as a comment
about the proposition above,
which will be used in section 6 (Theorem \ref{pullup-iso}).
\begin{cor}\label{2-universality}
The canonical functor $(P,\ph) \colon \calC \to \calC/G$ is
2-universal among $G$-invariant functors from $\calC$, i.e.,
the induced functor
$$
(P,\ph)^*\colon \Fun(\calC/G, \calC') \to \Inv(\calC, \calC')
$$
is an isomorphism of categories for all categories $\calC'$, where
$\Fun(\calC/G, \calC')$ is the category of functors from
$\calC/G$ to $\calC'$.
\end{cor}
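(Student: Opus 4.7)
The plan is to unpack the statement of the corollary, observe that it repackages Proposition~\ref{pi-universal} essentially verbatim, and then carry out the few extra checks needed to upgrade ``strict universality'' to an isomorphism of categories.

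First I would verify that $(P,\ph)^*$ is a well-defined functor. On objects, Lemma~\ref{invariant-compn} shows that $(HP, H\ph)$ is a $G$-invariant functor whenever $H \colon \calC/G \to \calC'$ is a functor. On morphisms, given a natural transformation $\ta\colon H\to H'$, I would whisker to form $\ta P\colon HP\to H'P$; the required compatibility with the invariance adjusters, i.e.~commutativity of
$$
\begin{CD}
HP @>{H\ph_{\al}}>> HPA_{\al}\\
@V{\ta P}VV @VV{\ta PA_{\al}}V\\
H'P @>>{H'\ph_{\al}}> H'PA_{\al},
\end{CD}
$$
evaluated at any $x\in\calC$, amounts to naturality of $\ta$ at the morphism $\ph_{\al,x}$ of $\calC/G$, so it is automatic. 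Functoriality of $(P,\ph)^*$ under vertical composition of natural transformations is then immediate.

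Next I would check bijectivity on objects. This is exactly the statement of part~(3) of Proposition~\ref{pi-universal}: every  $G$-invariant functor $(E,\ps)\colon\calC\to\calC'$ is of the form $(HP,H\ph)$ for a uniquely determined functor $H\colon\calC/G\to\calC'$. Thus $(P,\ph)^*$ is a bijection on the object classes.

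Finally I would handle bijectivity on morphisms. Injectivity is essentially free: since $P$ is the identity on object classes, if $\ta P=\ta'P$ then $\ta_x=\ta'_x$ for every $x\in\calC=\obj(\calC/G)$, so $\ta=\ta'$. Surjectivity is the main (and only) real point: given a morphism $\et\colon (HP,H\ph)\to(H'P,H'\ph)$ in $\Inv(\calC,\calC')$, I would define $\ze_x:=\et_x$ for $x\in\calC/G$ and verify that $\ze$ is natural with respect to every morphism $f=(f_{\be,\al})_{(\al,\be)}$ of $\calC/G$, not merely those coming from $P$. This is precisely the computation carried out at the end of the proof of Proposition~\ref{pi-universal}: writing $f=P^{(1)}_{x,y}S^{(1)}_{x,y}(f)=\sum_{\al\in G}P(f_{1,\al})\ph_{\al,x}$ in $\calC/G$, the naturality of $\et$ at each $f_{1,\al}$ combined with the compatibility of $\et$ with the invariance adjusters collapses the computation into the single identity $\ze_y H(f)=H'(f)\ze_x$. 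The only potential obstacle is making sure the ``expansion'' $f=P^{(1)}_{x,y}S^{(1)}_{x,y}(f)$ really does reduce naturality on arbitrary morphisms of $\calC/G$ to naturality on $\calC$-morphisms plus the invariance-adjuster compatibility, but this is precisely the calculation already recorded in the proof of Proposition~\ref{pi-universal}, so no new argument is required.
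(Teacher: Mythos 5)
Your proof is correct and follows essentially the same route as the paper: bijectivity on objects is exactly Proposition~\ref{pi-universal}(3), and full faithfulness on morphisms is exactly the naturality computation at the end of that proposition's proof (which, as you note, never uses invertibility of $\et$, so it applies to arbitrary morphisms of $\Inv(\calC,\calC')$). The paper dispatches the morphism-level statement by appealing to ``a general theory,'' so your explicit verification of well-definedness and of fullness is, if anything, more complete than the printed argument.
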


\begin{proof}
By Proposition \ref{pi-universal}(3), $(P,\ph)^*$ is bijective
on objects. Since $P\colon \calC \to \calC/G$ is dense,
$(P,\ph)^*$ is fully faithful by a general theory.
\end{proof}

\begin{exm}
\label{adjuster-representation}
Let $\calC = \k$ be a field and let $\calC' = \k\text{-}\Mod $ the category of (left) $\k$-vector spaces.
Assume that the $G$-action on $\calC$ is trivial.
Then the orbit category $\calC/G$ turns out to be the usual group algebra $\k G$,
$\Fun(\calC/G, \calC') = \k G\text{-}\Mod $ is the category of left $\k G$-modules, and
$\Inv(\calC, \calC') = \Rep_{\k}G$ is the category of $\k$-representations of $G$.
In this case the isomorphism above coincides with the well-known isomorphism
$\k G\text{-}\Mod  \iso \Rep_{\k}G$.
\end{exm}

$G$-covering functors are characterized as follows
(cf. the definition of Galois covering in \cite{Ga}.)

\begin{thm}
\label{chrtrz-cov}
Let $F=(F,\ps)$ be a  $G$-invariant functor.
Then the following are equivalent.
\begin{enumerate}
\item
$F=(F, \ps)$ is a $G$-covering;
\item
$F=(F,\ps)$ is a $G$-precovering that is universal among $G$-precoverings
from $\C$;
\item
$F=(F,\ps)$ is universal among  $G$-invariant functors from $\C$;
\item
There exist an equivalence $H\colon \C/G \to \C'$
such that $(F,\ps) \iso (HP, H\ph)$ as  $G$-invariant functors; and
\item
There exist an equivalence $H\colon \C/G \to \C'$
such that $(F,\ps) = (HP, H\ph)$.
\end{enumerate}
\end{thm}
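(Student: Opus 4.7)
The approach centers on Proposition \ref{pi-universal}: the canonical functor $P\colon \calC \to \calC/G$ is itself a $G$-covering, and is strictly universal (Proposition \ref{pi-universal}(3)), hence 2-universal (Corollary \ref{2-universality}), among $G$-invariant functors from $\calC$. Every condition (1)--(5) can therefore be rephrased as a statement comparing $F$ with $P$ through an intermediary functor $H\colon \calC/G \to \calC'$, and the job in each case is to promote $H$ to an equivalence. I plan to establish the cycle $(1) \Rightarrow (5) \Rightarrow (4) \Rightarrow (3) \Rightarrow (2) \Rightarrow (1)$.

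The heart of the argument is $(1) \Rightarrow (5)$. Since $F$ is $G$-invariant, Proposition \ref{pi-universal}(3) produces a unique functor $H\colon \calC/G \to \calC'$ with $(F,\ps)=(HP,H\ph)$, and the task reduces to showing that $H$ is an equivalence. Density of $H$ follows from density of $F=HP$ together with the fact that $P$ is the identity on objects. For fullness and faithfulness, the identities $F=HP$ and $\ps=H\ph$ yield, on Hom-spaces, the factorization
$$
F^{(1)}_{x,y} = H_{x,y} \circ P^{(1)}_{x,y};
$$
since both $F^{(1)}_{x,y}$ (by hypothesis) and $P^{(1)}_{x,y}$ (by Proposition \ref{pi-universal}(1)) are isomorphisms, so is $H_{x,y}$ for all $x,y$. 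As $P$ is surjective on objects, this makes $H$ fully faithful, and hence an equivalence.

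The remaining implications should follow essentially formally from the universal property of $P$. The step $(5) \Rightarrow (4)$ is trivial. For $(4) \Rightarrow (3)$, any $G$-invariant functor $E$ factors as $E \cong GP$ via Proposition \ref{pi-universal}(2), so $E \cong GH^{-1}F$ for a quasi-inverse $H^{-1}$ of $H$. For $(3) \Rightarrow (2)$, the universality of $F$ applied to the $G$-invariant functor $P$ gives $P\cong KF$ for some $K$, while Proposition \ref{pi-universal}(2) gives $F\cong HP$ for some $H$; the uniqueness halves of the two universal properties then force $HK\cong \id$ and $KH\cong \id$, so $H$ is an equivalence, which via $(4) \Rightarrow (1)$ shows $F$ is a $G$-covering, and universality among $G$-invariant functors restricts trivially to universality among $G$-precoverings. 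Finally $(2) \Rightarrow (1)$ runs analogously: apply $(2)$ to the $G$-precovering $P$ and compare with Proposition \ref{pi-universal}(2) to exhibit the inverse equivalences between $\calC/G$ and $\calC'$.

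The main obstacle is the implication $(1) \Rightarrow (5)$: converting the abstract $G$-covering hypothesis into an honest equivalence $H$ requires the Hom-space factorization displayed above, and this factorization in turn depends on tracing the compatibility between the adjusters $\ph$ on $P$ and $\ps$ on $F$ encoded by $\ps=H\ph$. Carefully bookkeeping this compatibility through the definitions of $F^{(1)}_{x,y}$ and $P^{(1)}_{x,y}$ is the only real calculation; every other step is a formal manipulation of universal properties.
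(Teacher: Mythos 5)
Your proposal is correct and follows essentially the same route as the paper: both hinge on Proposition \ref{pi-universal} and on the factorization $F^{(1)}_{x,y}=H_{x,y}\circ P^{(1)}_{x,y}$ (conjugated by the comparison isomorphism $\et$ in the non-strict case), so that $F$ is a $G$-covering precisely when $H$ is an equivalence; the paper merely organizes the statements as equivalences with (4) as the hub rather than as your cycle. The only caution is that your steps $(3)\Rightarrow(2)$ and $(2)\Rightarrow(1)$ silently use $(4)\Rightarrow(1)$, which is not available from the cycle at that point and must be proved directly --- but it is exactly the reverse reading of your displayed factorization, so nothing substantive is missing.
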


\begin{proof}
(1) \equivalent (4).
If the statement (1) holds,
then the following holds by \prpref{pi-universal}(2):

($*$) There exist a functor $H\colon \C/G \to \C'$
and an isomorphism $\et\colon (F,\ps) \to (HP, H\ph)$ of
 $G$-invariant functors.

This also follows from the statement (4) trivially.
Hence to show the equivalence of (1) and (4), it is enough
to show that $F$ is a $G$-covering if and only if $H$ is an
equivalence in the setting of ($*$).
More precisely we show that (a) $F$ is dense if and only if so is $H$;
and (b) $F$ is a $G$-precovering if and only if $H$ is fully faithful.
Let $x \in \C'$.
For each $y\in \obj(\C)=\obj(\C/G)$ we have an isomorphism
$\et_y\colon Fy \to HP y=Hy$ in $\C'$.
Hence $x \iso Fy$ if and only if $x \iso Hy$.
This shows the statement (a).
Now let $x, y \in \C$ and $(f_{\al})_{\al} \in \Ds_{\al\in G}\C(\al x,y)$.
Then we have a commutative diagram
$$
\begin{CD}
Fx @>{\ps_{\al,x}}>> F\al x @>{F(f_{\al})}>> Fy\\
@V{\et_x}VV  @VV{\et_{\al x}}V @VV{\et_y}V\\
HP x @>>{H\ph_{\al,x}}> HP\al x @>>{HP(f_{\al})}> HP y,
\end{CD}
$$
which yields the following commutative diagram:
$$
\begin{CD}
\Ds_{\al\in G}\C(\al x,y) @>{P^{(1)}_{x,y}}>> \C/G(x,y)\\
@V{F^{(1)}_{x,y}}VV @VV{H_{x,y}}V\\
\C'(Fx,Fy) @>>{\et_y(\blank)\et_x\inv}> \C'(Hx, Hy),
\end{CD}
$$
where $H_{x,y}$ is the restriction of $H$ to $\C/G(x,y)$.
Since the horizontal maps are isomorphisms,
the commutativity of this diagram shows
that $F^{(1)}_{x,y}$ is an isomorphism
if and only if $H_{x,y}$ is.
Hence (b) holds.

(2) \equivalent (4)
Note that $P=(P,\ph)$ is also a $G$-precovering.
Since all $G$-precoverings from $\C$ are
 $G$-invariant functors from $\C$,
$P$ has the universal property also among
$G$-precoverings from $\C$, by which 
this equivalence is obvious.

(3) \equivalent (4).
Since $P= (P,\ph)$ is also universal
among  $G$-invariant functors from
$\C$, this equivalence is obvious.

(5) \equivalent (4).
The implication ``(5) \implies (4)'' is trivial.
If (4) holds, then (1) holds and by \prpref{pi-universal}(3)
we have a functor $H\colon \C/G \to \C'$
such that $(F,\ps) = (HP, H\ph)$.
This $H$ is an equivalence by the argument above.
\end{proof}

The author learned the following construction from Keller \cite{Ke}.

\begin{dfn}[Cibils-Marcos, Keller]
(1) An orbit category $\C\fstorbit G$
is defined as follows.
\begin{itemize}
\item
$\obj(\C\fstorbit G):=\obj(\C)$;
\item
$\forall x, y \in G$, $\C\fstorbit G(x,y):=\Ds_{\al\in G}\C(\al x, y)$; and
\item
For $x\ya{f} y \ya{g} z$ in $\C\fstorbit G$,
$gf:=(\sum_{\al,\be\in G; \be\al=\mu}g_{\be}\cdot \be(f_{\al}))_{\mu\in G}$.
\end{itemize}
(2) Similarly another orbit category $\C\sndorbit G$ is defined as follows.
\begin{itemize}
\item
$\obj(\C\sndorbit G):=\obj(\C)$;
\item
$\forall x, y \in G$, $(\C\sndorbit G)(x,y):=\Ds_{\be\in G}\C(x,\be y)$; and
\item
For $x\ya{f} y \ya{g} z$ in $\C\sndorbit G$,
$gf:=(\sum_{\al,\be\in G; \al\be=\mu}\al(g_{\be})\cdot f_{\al})_{\mu\in G}$.
\end{itemize}
Note that $\C\sndorbit G=(\C\op\fstorbit G)\op$.
\end{dfn}

\begin{prp}\label{three-orbit-cats}
We have isomorphisms of categories
$$\C\fstorbit G \iso \C/G \iso  \C\sndorbit G.$$
\end{prp}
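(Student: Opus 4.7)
My plan is to construct isomorphisms $H_1 \colon \C/G \to \C \fstorbit G$ and $H_2 \colon \C/G \to \C \sndorbit G$ by invoking Theorem \ref{chrtrz-cov}. Concretely, I will exhibit canonical $G$-coverings $Q_i$ from $\C$ to each of $\C \fstorbit G$ and $\C \sndorbit G$, and then read off the induced equivalence from clause (5) of that theorem, verifying afterwards that it is actually bijective on objects.

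For the first isomorphism, set $Q_1 \colon \C \to \C \fstorbit G$ to be the identity on objects, and on morphisms put $Q_1(f) := (\de_{\al,1} f)_{\al\in G}$ for $f \in \C(x,y)$. Define an invariance adjuster $\ps = (\ps_\mu)_{\mu\in G}$ by
$$
\ps_{\mu, x} := (\de_{\al, \mu}\id_{\mu x})_{\al\in G} \in (\C \fstorbit G)(x, \mu x).
$$
A routine matrix calculation using the composition rule of $\C \fstorbit G$ verifies that $Q_1$ is a $\k$-linear functor and $\ps$ satisfies the cocycle condition of an invariance adjuster. The crucial step is the isomorphism condition: unpacking the definitions gives, for $(h_\al)_\al \in \bigoplus_{\al\in G}\C(\al x, y)$,
$$
Q_1^{(1)}((h_\al)_\al) = \sum_{\al\in G} Q_1(h_\al)\cdot \ps_{\al, x} = (h_\mu)_{\mu\in G},
$$
so $Q_1^{(1)}$ is literally the identity map, hence an isomorphism. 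Combined with density (immediate from $\obj(\C \fstorbit G) = \obj(\C)$), this shows $(Q_1, \ps)$ is a $G$-covering.

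Theorem \ref{chrtrz-cov}(5) then supplies a unique equivalence $H_1 \colon \C/G \to \C \fstorbit G$ with $(Q_1, \ps) = (H_1 P, H_1 \ph)$. On objects, $H_1(x) = H_1(P(x)) = Q_1(x) = x$, so $H_1$ is the identity on objects; being fully faithful and bijective on objects, it is an isomorphism of categories. The isomorphism $H_2$ is produced symmetrically: define $Q_2 \colon \C \to \C \sndorbit G$ by $Q_2(f) := (\de_{\be,1} f)_\be$ with adjuster $\ps'_{\mu,x} := (\de_{\be, \mu\inv}\id_x)_\be \in (\C\sndorbit G)(x, \mu x)$, and check that $Q_2^{(2)}$ is the identity map on $\bigoplus_\be \C(x, \be y)$; alternatively, one invokes the remark $\C \sndorbit G = (\C\op \fstorbit G)\op$ together with the canonical identification $(\C/G)\op \iso \C\op/G$ to transport the result.

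The main obstacle is pinning down the correct form of the invariance adjusters $\ps$ and $\ps'$ so that they simultaneously satisfy the cocycle condition and produce the identity map as $Q_1^{(1)}$ (resp.\ $Q_2^{(2)}$); the placement of the Kronecker delta is dictated by the asymmetric twist by $\be$ (resp.\ $\al$) in the composition rule of $\C \fstorbit G$ (resp.\ $\C \sndorbit G$) and is easy to get wrong on first attempt. Once the adjusters are correctly specified, everything else reduces to mechanical calculation and Theorem \ref{chrtrz-cov} delivers the isomorphisms essentially for free.
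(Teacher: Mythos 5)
Your proposal is correct, and the constructions check out: $Q_1$ is indeed a functor (the only surviving term in the composition rule of $\C\fstorbit G$ is $\be=\al=1$), $\ps_{\mu,x}=(\de_{\al,\mu}\id_{\mu x})_{\al}$ satisfies naturality and the cocycle condition, and $Q_1^{(1)}$ really is the identity on $\Ds_{\al}\C(\al x,y)$; the dual verifications for $Q_2$ and $\ps'$ also go through. Your route, however, differs from the paper's. The paper does not pass through Theorem \ref{chrtrz-cov} at all: it writes down the isomorphisms $S^{(1)}\colon\C/G\to\C\fstorbit G$ and $S^{(2)}\colon\C/G\to\C\sndorbit G$ directly (identity on objects, and on morphisms the projections $(f_{\be,\al})_{(\al,\be)}\mapsto(f_{1,\al})_{\al}$ and $(f_{\be,\al})_{(\al,\be)}\mapsto(f_{\be,1})_{\be}$), checks functoriality, and observes that the hom-space maps $P^{(i)}_{x,y}$ already constructed in the proof of Proposition \ref{pi-universal}(1) are their inverses. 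That argument is shorter and self-contained at the level of matrix bookkeeping. Your argument buys conceptual clarity: it exhibits $\C\fstorbit G$ and $\C\sndorbit G$ as targets of explicit $G$-coverings from $\C$ and lets the universal property do the identification, which also makes transparent \emph{why} the three constructions agree (they all corepresent $G$-invariant functors out of $\C$). If you unwind the proof of Proposition \ref{pi-universal}, the $H_1$ produced by strict universality is exactly $f\mapsto Q_1^{(1)}S^{(1)}_{x,y}(f)=S^{(1)}_{x,y}(f)$, so the two proofs ultimately produce the same isomorphism. One cosmetic caveat: your fallback identification $(\C/G)\op\iso\C\op/G$ for the second isomorphism is not established anywhere in the paper, so keep the direct construction of $Q_2$ as the actual argument rather than relying on that aside.
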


\begin{proof}
The isomorphisms $S^{(1)} \colon \calC/G \to \calC\fstorbit G$ and
$S^{(2)} \colon \calC/G \to \calC\sndorbit G$ are given by identities on objects,
and on morphisms by
$$\xymatrix{
\calC\fstorbit G(x,y)&\ar[l]_{S^{(1)}_{x,y}}\calC/G(x,y)
\ar[r]^{S^{(2)}_{x,y}}&\calC\sndorbit G(x,y)
}
$$
for all $x,y\in \calC$,
where $S^{(1)}_{x,y},S^{(2)}_{x,y}$ are defined by
$$\xymatrix{
(f_{1,\al})_{\al\in G}&\ar@{|->}[l](f_{\be,\al})_{(\al,\be)\in G\times G} \ar@{|->}[r]& (f_{\be,1})_{\be\in G} 
}$$
for all $(f_{\be,\al})_{(\al,\be)\in G\times G} \in \calC/G(x,y)$.
It is easy to verify that $S^{(1)}$ and $S^{(2)}$ are functors.
As in the proof of \prpref{pi-universal}(1), $S^{(i)}_{x,y}$
has the inverse $P^{(i)}_{x,y}$ for $i=1\text{ and }2$, and hence
$S^{(1)}$ and $S^{(2)}$ are isomorphisms of categories.
\end{proof}

\begin{exm}
\label{sk-gp-alg}
Let $R$ be an algebra, and $G \le \Aut(R)$. Regard $R$ as a category with only one object.
Then $R/G \iso R\fstorbit G \iso R*G$ (skew group algebra).
Indeed, an isomorphism $R\fstorbit G \to R*G$ is given by
$(f_{\al})_{\al} \mapsto \sum_{\al} f_{\al}*\al$; and
the multiplication rule $g_{\be}\cdot f_{\al}=g_{\be}\cdot\be(f_{\al})$ in $R\fstorbit G$
corresponds to the rule
$(g_{\be}*\be)(f_{\al}*\al)=g_{\be}\cdot \be(f_{\al})*\be\al$
in $R*G$ for all $\al, \be \in G$ and $f_{\al}, g_{\be} \in R$.
\end{exm}

\begin{rmk}
Even when $G$ is a monoid,
the two orbit categories $\calC\fstorbit G$ and
$\calC \sndorbit G$ are defined although the orbit category
$\calC/G$ is not well-defined in general.
But in that case these are not isomorphic to each other
in general.
For instance, let $G$ be the monoid $\ang{\al\mid \al^2=\al}$ and
$\calC:= \k[x]/(x^2)$ with a $G$-action defined by
$\al(a+b\bar{x}):=a$ for all $a,b\in \k$, where $\k$ is a field and $\bar{x}:=x+(x^2)$.
Then $\calC/G$ is not well-defined but $\calC\fstorbit G$, $\calC\sndorbit G$
are defined and have the forms
$\calC\fstorbit G \iso \k\ang{x,y}/(x^2, y^2-y,yx)$ and
$\calC\sndorbit G \iso \k\ang{x,y}/(x^2, y^2-y,xy)$.
A direct calculation shows that $\calC\fstorbit G \not\iso \calC\sndorbit G$.
\end{rmk}

\begin{rmk}\label{rmk:orbit-1}
Cibils and Marcos \cite{C-M} call $\C\fstorbit G$
the {\em skew category} and denote it by $\C[G]$, and
they have the same opinion that this (or its {\em basic} category,
see Definition \ref{basic}) can be considered as a substitute for
the orbit category in the case that $G$-action on $\C$ is not free.
(Cf. Remark \ref{rmk:skw}.)
\end{rmk}

\section{Skew group categories}

The following construction is well-known (see Freyd \cite[p.\ 61, Exercise B]{Fr}, \cite[2.1, Example 7]{Ga-Ro} for instance).

\begin{dfn}
The {\em split idempotent completion} of a category $\calC$ is
the category $\sic(\calC)$ defined as follows.
Objects of  $\sic(\calC)$ are the pairs $(x, e)$ with $x \in \calC$ and $e^{2}=e\in \calC(x,x)$.
For two objects $(x, e), (x', e')$ of $\sic(\calC)$,
the set
 of morphisms from $(x,e)$ to $(x',e')$ is given by
$\sic(\calC)((x,e),(x',e')):=\{f \in \calC(x,x') \mid f=e'fe\}$, and the composition is given by that of $\calC$.
\end{dfn}

\begin{rmk}
It is obvious that all idempotents in $\sic(\calC)$ split, and that
the canonical embedding $\si_{\calC}\colon \calC \to \sic(\calC)$
sending each morphism $f\colon x \to y$ in $\calC$
to $f\colon (x,\id_{x}) \to (y, \id_{y})$ is universal among functors from $\calC$ to a category
with all idempotents split.
\end{rmk}

\begin{dfn}
Contravariant functors from $\calC$ to the category $\Mod \k$
of $\k$-modules are called (right) $\calC$-{\em modules}.
The class of them together with the natural transformations
between them forms a category, which is denoted by $\Mod \calC$.
\end{dfn}

\begin{prp}\label{sic-Morita}
The canonical embedding $\si_{\calC}\colon\calC \to \sic(\calC)$ induces an equivalence of
module categories
$\si\colon \Mod \sic(\calC) \to \Mod \calC$.
Thus $\calC$ and $\sic(\calC)$ are Morita equivalent. 
\end{prp}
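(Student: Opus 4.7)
The plan is to write $\sigma^*$ as the restriction functor along $\sigma_\calC$, construct an explicit quasi-inverse $\Phi \colon \Mod\calC \to \Mod\sic(\calC)$ by ``splitting'' idempotents inside each module, and then check that the two composites are naturally isomorphic to the identities. The construction of $\Phi$ is the main content; the two natural isomorphisms are essentially formal.

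First I would define $\Phi$ as follows. Given a $\calC$-module $M$ and an object $(x,e) \in \sic(\calC)$, the morphism $M(e) \colon M(x) \to M(x)$ is an idempotent endomorphism of the $\k$-module $M(x)$, so $M(x) = \Im M(e) \oplus \Ker M(e)$. Set
\[
\Phi M(x,e) := \Im M(e).
\]
For a morphism $f \colon (x,e) \to (x',e')$ in $\sic(\calC)$, the relation $f = e'fe$ in $\calC$ gives $M(f) = M(e) M(f) M(e')$, so $M(f)$ restricts to a map $\Phi M(x',e') \to \Phi M(x,e)$, which I take as $\Phi M(f)$. Functoriality of $\Phi M$ and of $\Phi$ itself (on morphisms of $\calC$-modules, defined by restriction) are straightforward from the identities for idempotents.

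Next I would verify $\sigma^* \Phi \iso \id_{\Mod\calC}$: for any $\calC$-module $M$ and any $x \in \calC$,
\[
(\sigma^*\Phi M)(x) = \Phi M(x,\id_x) = \Im M(\id_x) = M(x),
\]
and this equality is visibly natural in $M$ and compatible with morphisms in $\calC$. Conversely, for $\Phi \sigma^* \iso \id_{\Mod\sic(\calC)}$, the key observation is that inside $\sic(\calC)$ the object $(x,e)$ is a retract of $(x,\id_x)$ via the pair of morphisms $e \colon (x,e) \to (x,\id_x)$ and $e \colon (x,\id_x) \to (x,e)$, whose composites are $e$ on $(x,\id_x)$ and $\id_{(x,e)}$ on $(x,e)$. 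Applying any $N \in \Mod\sic(\calC)$, we obtain $N(e)$ as an idempotent endomorphism of $N(x,\id_x) = (\sigma^* N)(x)$ whose image is canonically isomorphic to $N(x,e)$. Hence
\[
(\Phi \sigma^* N)(x,e) = \Im((\sigma^* N)(e)) \iso N(x,e),
\]
and these isomorphisms are natural in $(x,e)$ because morphisms in $\sic(\calC)$ factor through the ambient objects $(x,\id_x)$ via the retraction data above.

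The main obstacle, such as it is, is purely bookkeeping: keeping track of the contravariance of modules and ensuring that the restriction of $M(f)$ actually lands in the correct summand; this is what the relation $f = e'fe$ and the idempotency of $M(e), M(e')$ provide. Once $\Phi$ is constructed, both unit and counit isomorphisms are immediate, so $\sigma^*$ is an equivalence, whence $\calC$ and $\sic(\calC)$ are Morita equivalent.
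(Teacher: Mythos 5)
Your proposal is correct and follows essentially the same route as the paper: the quasi-inverse is defined on objects by $(x,e)\mapsto \Im M(e)$ and on morphisms by restriction, exactly as in the paper's proof. Your verification of the two natural isomorphisms (in particular the retraction $(x,e)\rightleftarrows(x,\id_x)$ via $e$) simply fills in details the paper leaves to the reader.
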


\begin{proof}
A quasi-inverse  $\ta\colon \Mod  \calC  \to \Mod \sic(\calC)$ of $\si$ is given as follows.
Let $\la\colon M \to M'$ be in $\Mod\calC$.
For each $(x,e) \in \sic(\calC)$ with $x\in \calC$ and $e=e^2\in \calC(x,x)$,
$(\ta M)(x,e):= \Im M(e) \ (\le M(x))$; and 
$(\ta\la)_{(x,e)}:= \la_{x}|_{\Im M(e)}$, the restriction of $\la_x$.
It is easy to see that these are well-defined and that $\ta$ is a quasi-inverse of $\si$.
\end{proof}

\begin{dfn}\label{basic}
A full subcategory $\calC'$ of a category $\calC$ is called a {\em basic} category of $\calC$
if the objects of $\calC'$
form a complete set of representatives of isoclasses of objects of $\calC$.
In this case it is obvious that the canonical embedding
$\calC' \to \calC$ is an equivalence,
and hence basic categories of $\calC$ are pairwise isomorphic.
We take one of them and denote it by $\bas(\calC)$.
We also choose a quasi-inverse of the canonical embedding
 $\iota_{\bas(\C)}\colon \bas(\calC) \to \calC$ and denote it by 
 $\ro_{\calC}\colon \calC \to \bas(\calC)$.
\end{dfn}

\begin{dfn}\label{dfn:skew-gp-cat}
Assume that a group $G$ acts on a category $\calC$.
Then the category
$\calC * G :=\bas(\sic(\calC/G))$ is called a {\em skew group category} of $\calC$ by $G$.
We denote the composite of the functors
$\calC \ya{P}  \calC/G \xrightarrow{\si_{\calC/G}} \sic(\calC/G) \xrightarrow{\ro_{\sic(\calC/G)}} \calC * G$ also by $P$.
Note that $\calC/G$ and $\calC * G$ are Morita equivalent by \prpref{sic-Morita}.
\end{dfn}

\begin{rmk}\label{rmk:skw}
The name ``skew group category'' came from the fact described in Example \ref{sk-gp-alg}.
When $G$ is a finite group the definition above coincides with that given in Reiten-Riedtmann \cite{RR}. (Cf. Remark \ref{rmk:orbit-1}.)
\end{rmk}

\begin{rmk}
\label{rmk:autoequivalence}
We make the following remark on auto-equivalences.
Consider the case that the $G$-action on $\C$
is given by
auto-equivalences of $\C$ modulo natural isomorphisms:
$$G \to \Aeq(\C)/\!\cong.$$
An important example is given by the construction of cluster categories,
where $G$ is cyclic.
When $G$ is cyclic, say $G=\ang{\bar{F}}$ with $\bar{F} \in \Aeq(\C)/\!\!\cong$
and $F \in \bar{F}$,
the orbit category $\C/F:=\C/\ang{\bar{F}}$ of $\C$ by $\ang{\bar{F}}$ can be defined by
setting $\C/F:= \bas(\C)/\ang{F'}$,
where $F':= \ro_{\C} \circ F \circ \iota_{\bas(\C)}$ is an isomorphism of $\bas(\C)$
(see Definition \ref{basic} for notations).

But if $G$ is not cyclic,
then this standard construction {\em does not work} in general.
An alternative construction will be given later
(see section \ref{section:colim-orb-cat}).
\end{rmk}

Here we give a definition of skew monoid categories (or algebras)
by generalizing the notion of skew group categories.
Recall that a category $\calC$ defines the corresponding algebra
$\ds \calC$ by
\begin{equation}\label{cat-alg}
\ds \calC:= \Ds_{x,y\in \calC}\calC(x,y),
\end{equation}
where elements $f$ of the right hand side is regarded as matrices
$f=(f_{y,x})_{x,y\in \calC}$ and the multiplication is given by the usual matrix multiplication (see e.g.\ \cite{Ga-Ro}).

\begin{dfn}
\label{dfn:skew-monoid-cat}
Let  $\calC$ be a category and $G$ a monoid acting on $\calC$.
Here we assume that the $G$-action on $\calC$ is given by a
homomorphism $G \to \End(\calC)$, where
$\End(\calC):=\{f\colon\calC \to \calC \mid f \text{ is a functor}\}$.
In the case that $G$ contains 0, we add the zero object into $\calC$ and we allow that
$f(x)=0$ for some $f\in \End(\calC)$ and $x \in \calC$.
We define a {\em skew monoid category} $\calC * G$ by
$$\calC * G:= \bas(\sic(\calC\fstorbit G)).$$
A {\em skew monoid algebra} $(\ds \calC) * G$ is defined by
$$(\ds \calC) * G:= \ds(\calC * G) = \ds\bas(\sic(\calC\fstorbit G)).$$
\end{dfn}

\section{Pushdown functors and derived equivalences}

\begin{dfn}
Let $R$ be a category.
\begin{enumerate}
\item
The full subcategory of $\Mod R$ consisting of projective objects
is denoted by $\Prj R$.
Note that an $R$-module $X$ is projective
if and only if $X$ is isomorphic to a direct summand of a direct sum
of representable functors $R(\blank, x)$ ($x \in R$).
\item
An $R$-module $X\in \Mod R$ is called {\em finitely generated}
if there exists an epimorphism from a finite direct sum of representable
functors to $X$.
Note that $X$ is a finitely generated projective $R$-module if and only if
$X$ is isomorphic to a direct summand of a finite direct sum of representable functors.
The full subcategory of $\Prj R$ consisting of
finitely generated projective $R$-modules is denoted by $\prj R$.
The full subcategory of $\Mod R$ consisting of finitely generated
$R$-modules is denoted by $\mod R$.
\item
The homotopy category of $\Prj R$ is denoted by
$\calK(\Prj R)$ and the full subcategory of
$\calK(\Prj R)$ consisting of bounded complexes of
finitely generated projectives is denoted by
$\Kb(\prj R)$.
\end{enumerate}
\end{dfn}
\begin{dfn}
Let $G$ be a group acting on a category $R$, and
$P\colon R \to R/G$ the canonical functor.
\begin{enumerate}
\item
The functor $P^{\Cdot}: \Mod R/G \to \Mod R$
defined by $P^{\Cdot}M:= M\circ P$
for all $M \in \Mod R/G$ is called
the {\em pullup} of $P$.
The pullup functor $P^{\Cdot}$ has a left adjoint
$P\down\colon \Mod R \to \Mod R/G$, which is called
the {\em pushdown} of $P$.
Note that we have $P\down R(\blank, x)\iso R/G(\blank,P x)$
for all $x \in R$.
This together with the right exactness of $P\down$ shows
that $P\down$ induces a functor $P\down\colon \mod R \to \mod R/G$.
\item
The pullup $P^{\Cdot}$ and the pushdown $P\down$
induce functors $P^{\Cdot}\colon \calK(\Prj R/G) \to \calK(\Prj R)$ and
$P\down\colon\calK(\Prj R) \to \calK(\Prj R/G)$, respectively,
which also form an adjoint pair
$\xymatrix@1@C2ex{P\down \ar@{-|}[r] & P\up}$.
Note that $P\down$ also induces a functor
$P\down\colon\Kb(\prj R) \to \Kb(\prj R/G)$.
\item
Each $\al\in G$ defines an automorphism of $\Mod R$
by setting ${}^{\al}X:= X \circ A_{\al\inv}$ for all $X\in \Mod R$, by which
the $G$-action on $R$ induces a $G$-action on $\Mod R$.
Note that ${}^{\al}R(\blank,x) = R(\al\inv(\blank),x)\iso R(\blank, \al x)$
for all $x \in R$.

The $G$-action on $\Mod R$ canonically induces
that on $\calK(\Prj R)$ and on $\Kb(\prj R)$.
Namely, for each complex $X:=(X^i, d^i)_{i\in \bbZ}$ and $\al\in G$
set ${}^{\al}X:= ({}^{\al}X^i, {}^{\al}d^i)_{i\in \bbZ}$.
\end{enumerate}
\end{dfn}

\begin{thm}\label{pushdown-precov1}
Let $R$ be a
category, $G$ a group acting on $R$, and $P\colon R \to R/G$
the canonical $G$-covering.
Then the pushdown functor $P\down\colon \mod R \to \mod R/G$
is a $G$-precovering. 
\end{thm}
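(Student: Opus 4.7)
My plan is to first equip $P\down$ with an invariance adjuster $\Phi=(\Phi_{\al})_{\al\in G}$ making it a $G$-invariant functor, and then verify the precovering isomorphism condition $(P\down)^{(1)}_{X,Y}$ by reducing from arbitrary finitely generated modules to representable functors, where everything collapses to the $G$-covering property of $P$ itself.

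For the invariance adjuster, start on representables. For each $x\in R$ the identification $P\down R(\blank,x)\iso R/G(\blank,Px)$ together with the isomorphism $\ph_{\al,x}\colon Px\to P(\al x)$ in $R/G$ (supplied by the invariance adjuster of $P$) gives, via Yoneda, an isomorphism
$\Phi_{\al,R(\blank,x)}\colon P\down R(\blank,x)\to R/G(\blank,P\al x)\iso P\down({}^{\al}R(\blank,x)).$
Extend additively to $\prj R$. For a general $X\in\mod R$, take a projective presentation $Q_{1}\to Q_{0}\to X\to 0$ with $Q_{0},Q_{1}\in\prj R$; since the automorphism ${}^{\al}(\blank)$ is exact and $P\down$ is right exact, both rows
$$
\xymatrix@C=3ex{
P\down Q_{1}\ar[r]\ar[d]_{\Phi_{\al,Q_{1}}} & P\down Q_{0}\ar[r]\ar[d]_{\Phi_{\al,Q_{0}}} & P\down X\ar[r] & 0\\
P\down({}^{\al}Q_{1})\ar[r] & P\down({}^{\al}Q_{0})\ar[r] & P\down({}^{\al}X)\ar[r] & 0
}
$$
are exact, so the universal property of cokernels yields a unique $\Phi_{\al,X}$ making the square commute. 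A routine verification (using naturality of $\ph_{\al}$, independence of the chosen presentation, and the cocycle identity of $\ph$) shows that $\Phi_{\al,X}$ is an isomorphism, natural in $X$, and that $\Phi=(\Phi_{\al})_{\al\in G}$ satisfies the cocycle condition of an invariance adjuster.

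Next verify the precovering condition. For representables $X=R(\blank,x)$ and $Y=R(\blank,y)$, Yoneda identifies $\mod R({}^{\al}X,Y)\iso R(\al x,y)$ and $\mod R/G(P\down X,P\down Y)\iso R/G(Px,Py)$; under these identifications $(P\down)^{(1)}_{X,Y}$ becomes exactly the map $P^{(1)}_{x,y}\colon \Ds_{\al\in G}R(\al x,y)\to R/G(Px,Py)$, which is an isomorphism because $P$ is a $G$-covering (Proposition \ref{pi-universal}(1)). Additivity in each argument extends the isomorphism to arbitrary $X,Y\in\prj R$. For the general case, choose a projective presentation $Q_{1}\to Q_{0}\to X\to 0$ in $\mod R$. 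Applying $\mod R(\blank,Y)$ after ${}^{\al}(\blank)$ gives a left exact sequence for every $\al\in G$; summing over $\al\in G$ (direct sums being exact in $\Mod\k$) yields a left exact sequence ending in $\Ds_{\al}\mod R({}^{\al}X,Y)$. Dually, right exactness of $P\down$ followed by $\mod R/G(\blank,P\down Y)$ gives a left exact sequence ending in $\mod R/G(P\down X,P\down Y)$. Naturality of $\Phi$ and of $(P\down)^{(1)}$ assembles these into a commutative ladder; since the maps on $Q_{0}$ and $Q_{1}$ are already known to be isomorphisms, the five lemma (in the left exact form) forces $(P\down)^{(1)}_{X,Y}$ to be an isomorphism as well.

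The main obstacle I anticipate is not the individual steps but the bookkeeping that connects them: namely, to verify that the ladder in the final step actually commutes, one has to track how $\Phi_{\al,X}$ built from the presentation matches the transformations produced on representables, and how $\mod R(f,Y)$ for a map $f$ of projectives interacts with the Yoneda identifications and with the sum over $\al$. This is a naturality check that is unsurprising in principle but lengthy in practice; the rest of the argument is then formal.
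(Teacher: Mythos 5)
Your overall strategy---define the adjuster on representables via Yoneda and $\ph_{\al}$, observe that on representables the isomorphism condition reduces to the $G$-covering property of $P$ itself, and then bootstrap by resolutions---is genuinely different from the paper's and workable in principle, but as written the bootstrapping step has two real gaps. First, after establishing the isomorphism for $X,Y\in\prj R$ you resolve $X$ by $Q_{1}\to Q_{0}\to X\to 0$ and invoke the five lemma ``since the maps on $Q_{0}$ and $Q_{1}$ are already known to be isomorphisms''; but in that ladder $Y$ is an arbitrary object of $\mod R$, while you have only proved the isomorphism at $Q_{0},Q_{1}$ when $Y$ is also projective. A separate second-variable reduction (fix $X\in\prj R$, resolve $Y$, use right exactness of $P\down$ together with projectivity of ${}^{\al}X$ and of $P\down X$) has to come first and is nowhere in the proof. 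Second, writing $Q_{1}\to Q_{0}\to X\to 0$ with $Q_{1}\in\prj R$ presumes $X$ is finitely \emph{presented}; the theorem concerns finitely \emph{generated} modules over an arbitrary category $R$, where the kernel of $Q_{0}\to X$ need not be finitely generated. If $Q_{1}$ is an infinite direct sum of representables, then $(P\down)^{(1)}_{Q_{1},Y}$ is in general only injective, not bijective: under Yoneda it becomes a canonical map of the shape $\Ds_{\al}\prod_{i}\to\prod_{i}\Ds_{\al}$. The argument can be saved, because the diagram chase only needs injectivity at the $Q_{1}$ position (the left-exact ``four lemma''), but that weaker statement must be isolated and proved; the five lemma as you invoke it is not available. (The same finite-presentation slip occurs in your construction of $\Phi_{\al,X}$, though there it is harmless since the adjuster on representables extends to arbitrary, not just finite, direct sums.)

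For comparison, the paper avoids resolutions entirely: it writes $P\down$ and its adjuster $\ph\down$ explicitly in matrix form, establishes the adjunction $P\down\dashv P\up$ with explicit unit and counit, and records the canonical isomorphism $P\up P\down Y\iso\Ds_{\al\in G}{}^{\al}Y$. Under the adjunction, the map $(P\down)^{(2)}_{X,Y}$ is identified with the canonical homomorphism $\Ds_{\al\in G}\Hom_{R}(X,{}^{\al}Y)\to\Hom_{R}(X,\Ds_{\al\in G}{}^{\al}Y)$, which is an isomorphism for every finitely generated $X$ and \emph{arbitrary} $Y$ (a map out of a finitely generated module into a direct sum factors through a finite subsum); one then uses the equivalence between the $(1)$- and $(2)$-conditions. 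This handles all of $\mod R$ in one stroke, with no finite-presentation hypothesis and no case analysis on $Y$. If you wish to keep your route, patch it by (i) doing the $Y$-reduction first and (ii) replacing the five lemma by the version needing only injectivity at the $Q_{1}$ spot; otherwise the adjunction argument is both shorter and strictly more general.
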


\begin{proof}
First of all we give the precise form of the pushdown
$P\down=(P\down, \ph\down)$ as a  $G$-invariant functor.

{\em Definition of $P\down$}:

On objects: For each $X \in \Mod R$
the module $P\down X \in \Mod R/G$ is defined as follows:

For each $x\in \obj(R/G)=\obj(R)$, $(P\down X)(x):= \Ds_{\al \in G} X(\al x)$;

for each $f\colon x \to y$ in $R/G$ with
$f=(f_{\be,\al})_{\al, \be \in G}\in (R/G)(x, y) \subseteq \prod_{\al, \be \in G}R(\al x, \be y)$,
$(P\down X)(f)$ is defined by the commutative diagram
\begin{equation}\label{pi-down-obj}
\begin{CD}
(P\down X)(y) @>(P\down X)(f)>> (P\down X)(x)\\
@\vert @\vert\\
\Ds_{\be\in G}X(\be y) @>>(X(f_{\be,\al}))_{\be,\al}> \Ds_{\al\in G}X(\al x).
\end{CD}
\end{equation}

On morphisms: For each morphism $u\colon X \to X'$ in $\Mod R$,
the morphism $P\down u \colon P\down X \to P\down X'$
is defined as follows:
$P\down u:= ((P\down u)_{x})_{x \in \obj(R/G)}$, where
for each $x \in \obj(R/G)$, $(P\down u)_{x}$ is defined by the commutative diagram
\begin{equation}\label{pi-down-mor}
\begin{CD}
(P\down X)(x) @>(P\down u)_{x}>> (P\down X')(x)\\
@\vert @\vert\\
\Ds_{\al\in G}X(\al x) @>>\Ds_{\al\in G}u_{\al x}> \Ds_{\al \in G}X'(\al x).
\end{CD}
\end{equation}
Then for each $f\colon x \to y$ in $R/G$ as above we have
a commutative diagram
$$
\begin{CD}
\Ds_{\be\in G}X(\be y) @>(X(f_{\be,\al}))_{\be,\al}>> \Ds_{\al\in G}X(\al x)\\
@V\Ds_{\be\in G}u_{\be y}VV @VV\Ds_{\al\in G}u_{\al x}V\\
\Ds_{\be\in G}X'(\be y) @>>(X'(f_{\be,\al}))_{\be,\al}> \Ds_{\al\in G}X'(\al x),
\end{CD}
$$
which shows that $P\down u$ is a morphism in $\Mod R/G$.
This defines a functor $P\down\colon \Mod R \to \Mod R/G$.
Then $P\down$ is a left adjoint to the pullup $P^{\Cdot} \colon \Mod R/G \to \Mod R$.
Indeed, for each $X \in \Mod R$ and $Y \in \Mod R/G$ the adjunction
$$
\th_{X,Y}\colon \Hom_{R/G}(P\down X, Y) \to \Hom_R(X,P\up Y)
$$
is given by
$(\th_{X,Y}t)_x:= t_{x,1}\colon X(x) \to Y(x)=Y(P x)=(P\up Y)(x)$
for each $x \in \obj(R)=\obj(R/G)$
and $t \in \Hom_{R/G}(P\down X, Y)$
with $t= (t_x)_{x \in R/G}$ and
$t_x = (t_{x,\al})_{\al\in G}\colon \Ds_{\al\in G}X(\al x)$ $\to Y(x)$;
and its inverse 
$$
\th\inv_{X,Y}\colon \Hom_R(X,P\up Y) \to \Hom_{R/G}(P\down X, Y)
$$
is given by $(\th\inv_{X,Y}f)_x:= (Y(\ph_{\al,x})f_{\al x})_{\al\in G}$
for each $f\in \Hom_R(X,P\up Y)$ and $x \in R/G$.

Here, note that by construction
$
(P\up P\down X)(x) = \Ds_{\al\in G}X(\al x)=(\Ds_{\al\in G}{}^{\al\inv}X)(x) \iso (\Ds_{\al\in G}{}^{\al}X)(x)
$
for all $X \in \Mod R$ and $x\in R$,
which yields the canonical isomorphism:
$$
P\up P\down X \iso \Ds_{\al\in G}{}^{\al}X.
$$

{\em Definition of $\ph\down$}:

For each $\mu \in G$ define a morphism
$\ph\down{}_{\mu}\colon P\down \to P\down\circ{}^{\mu}(\blank)$ by
$\ph\down{}_{\mu}:= (\ph\down{}_{\mu, X})_{X\in \Mod R}$, where for each
$X \in \Mod R$, the morphism $\ph\down{}_{\mu, X}$ is given by
$\ph\down{}_{\mu, X}:= (\ph\down{}_{\mu, X, x})_{x\in R}$ and
by the commutative diagram
$$
\begin{CD}
(P\down X)(x) @>\ph\down{}_{\mu, X, x}>> (P\down ({}^{\mu}X))(x)\\
@\vert @\vert\\
\Ds_{\al \in G}X(\al x) @>>(\de_{\al,\mu\inv \be}\id_{X\al x})_{\al,\be\in G}> \Ds_{\be\in G}X(\mu\inv\be x)
\end{CD}
$$
for each $x \in R$.
Then $\ph\down{}_{\mu}$ turns out to be a natural isomorphism
for each $\mu\in G$,
and the family $\ph\down:=(\ph\down{}_{\mu})_{\mu\in G}$ is easily verified to be an invariance adjuster.
Thus the pair $P\down=(P\down, \ph\down)$ is a  $G$-invariant
functor.

For each $X, Y \in \mod R$ using the description of
$(P\down, \ph\down)$ above,
it is not hard to check the commutativity of the following diagram
with canonical maps:

$$
\begin{CD}
\Ds_{\al\in G}(\mod R)(X, {}^{\al}Y) @>\sim >> 
  (\Mod R)(X, \Ds_{\al \in G}{}^{\al}Y)\\
@V{P^{(2)}_{\Cdot\ X,Y}}VV @VV\wr V\\
(\mod R/G)(P\down X, P\down Y)
  @>\sim >> (\Mod R)(X, P\up P\down Y),
\end{CD}
$$
which shows that $P\down=(P\down, \ph\down)$ is a $G$-precovering.
\end{proof}

\begin{thm}\label{pushdown-precov}
Let $R$ be a
category, $G$ a group acting on $R$, and $P\colon R \to R/G$
the canonical $G$-covering.
Then the pushdown functor $P\down\colon\Kb(\prj R) \to \Kb(\prj R/G)$
is a $G$-precovering. 
\end{thm}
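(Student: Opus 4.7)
The approach is to bootstrap Theorem~\ref{pushdown-precov1} from $\mod R$ up to $\Kb(\prj R)$, using that the pushdown acts degreewise on complexes together with the adjunction $P\down\dashv P\up$. First I would check that $P\down$ restricts to a functor $\prj R\to \prj R/G$: the natural isomorphism $P\down R(\blank,x)\iso R/G(\blank,Px)$ shows representables go to representables, and additivity handles finite direct sums. Extending componentwise produces a functor $\calC^b(\prj R)\to \calC^b(\prj R/G)$ that descends to the bounded homotopy categories. The invariance adjuster $\ph\down$ built in the proof of Theorem~\ref{pushdown-precov1} is itself defined pointwise in the module variable, so it lifts verbatim, degree by degree, to an invariance adjuster of $P\down\colon \Kb(\prj R)\to \Kb(\prj R/G)$.

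The core task is then to prove that, for all $X,Y\in \Kb(\prj R)$, the map
$$F^{(2)}_{X,Y}\colon \Ds_{\al\in G}\Kb(\prj R)(X,{}^{\al}Y)\to \Kb(\prj R/G)(P\down X, P\down Y)$$
is bijective. The adjunction $P\down\dashv P\up$ on $\Mod$ acts componentwise on complexes, preserves chain maps and homotopies (both functors being additive), and hence lifts to an adjunction $\calK(\Prj R)\leftrightarrows \calK(\Prj R/G)$; combined with the canonical identification $P\up P\down Y\iso \Ds_{\al\in G}{}^{\al}Y$ established in the proof of Theorem~\ref{pushdown-precov1}, this yields
$$\Kb(\prj R/G)(P\down X, P\down Y)\iso \calK(\Prj R)(X,\Ds_{\al\in G}{}^{\al}Y).$$
Since $X$ is bounded and each $X^i$ is finitely generated projective, $\Hom_R(X^i,\blank)$ distributes over the direct sum $\Ds_{\al}$; applying this in each of the finitely many nonzero degrees of $X$ and likewise for homotopies gives
$$\calK(\Prj R)(X,\Ds_{\al\in G}{}^{\al}Y)\iso \Ds_{\al\in G}\calK(\Prj R)(X,{}^{\al}Y)=\Ds_{\al\in G}\Kb(\prj R)(X,{}^{\al}Y).$$

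To finish, I would check that the composite of these two canonical isomorphisms is exactly $F^{(2)}_{X,Y}$, by unwinding the unit of adjunction and the definition of $\ph\down$ degreewise; the argument is the complex analogue of the commutative diagram closing the proof of Theorem~\ref{pushdown-precov1}. The only real delicacy is this last compatibility check: one needs to see that the differentials of $X$ and $Y$ intertwine the identification $P\up P\down Y\iso \Ds_{\al\in G}{}^{\al}Y$ in the correct way, but because the adjunction is built componentwise the differentials appear identically on both sides and the verification reduces to the module-level statement already proved.
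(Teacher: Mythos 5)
Your proposal is correct and follows essentially the same route as the paper: lift $(P\down,\ph\down)$ and the adjunction $P\down\dashv P\up$ degreewise to the homotopy categories, use compactness of $X$ (boundedness plus finite generation of each $X^i$, for both chain maps and homotopies) to identify $\calK(\Prj R)(X,\Ds_{\al}{}^{\al}Y)$ with $\Ds_{\al}\Kb(\prj R)(X,{}^{\al}Y)$, and reduce the compatibility of the resulting square with $F^{(2)}_{X,Y}$ to the module-level diagram from Theorem~\ref{pushdown-precov1}. The paper's proof is just a terser version of the same argument.
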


\begin{proof}
Let $X, Y \in \Kb(\prj R)$.
Then since $X$ is compact, the canonical homomorphism
$\Ds_{\al\in G}\Kb(\prj R)(X, {}^{\al}Y) \to 
  \calK(\Prj R)(X, \Ds_{\al \in G}{}^{\al}Y)$
is an isomorphism.
The description of $P\down=(P\down, \ph\down)$ above canonically yields
that of the pushdown functor between homotopy categories.
Then the commutativity of
the diagram
$$
\begin{CD}
\Ds_{\al\in G}\Kb(\prj R)(X, {}^{\al}Y) @>\sim >> 
  \calK(\Prj R)(X, \Ds_{\al \in G}{}^{\al}Y)\\
@V{P^{(2)}_{\Cdot\ X,Y}}VV @VV\wr V\\
\Kb(\prj R/G)(P\down X, P\down Y)
  @>\sim >> \calK(\Prj R)(X, P\up P\down Y)
\end{CD}
$$
with canonical maps follows from that of the diagram in the proof of the previous theorem, and the theorem is proved.
\end{proof}

To state the next result we need some terminologies.
\begin{dfn}
Let $R$ be a category and $G$ a group.
\begin{enumerate}
\item
A full subcategory $E$ of $\Kb(\prj R)$ is called a {\em tilting subcategory}
for $R$ if it has the following properties:
\begin{enumerate}
\item
$\Kb(\prj R)(U, V[i]) = 0$ for all $U,V \in E$ and for all $i\ne 0$;
\item
$R(\blank, x) \in \thick E$ for all $x \in R$, where $\thick E$ is
the {\em thick} subcategory generated by $E$, i.e., the smallest full triangulated subcategory of $\Kb(\prj R)$
containing $E$ closed under isomorphisms and direct summands.
\end{enumerate}
\item
Assume that $R$ has a $G$-action.
A tilting subcategory $E$ of $\Kb(\prj R)$ is called {\em $G$-stable}
if ${}^{\al}U \in E$ for all $U\in E$ and $\al \in G$.
\item
Two categories $R$ and $S$ are said to be
{\em derived equivalent}
if the derived categories $\calD(\Mod R)$ and $\calD(\Mod S)$
are equivalent as triangulated categories.
\end{enumerate}
\end{dfn}

To apply the following theorem
we assume throughout the rest of this section
except for Definition \ref{dfn:equivariant},
Remark \ref{rmk:equivariant} and Lemma \ref{equivariant-eq} that
the categories $R$ in consideration are 
{\em small and $\k$-flat}, in the sense that
$R(x,y)$ is a flat $\k$-module for each $x, y \in R$.
(When $R$ is a differential graded category as in \cite{Ke:DerDGcat},
the definition of $\k$-flatness should be slightly changed, but
in the usual category case the definition above works.)

By Rickard \cite{Ric} and Keller \cite[9.2, Corollary]{Ke:DerDGcat}
the following is known.

\begin{thm}
\label{der-eq}
Two categories $R$ and $S$ are
derived equivalent
if and only if
there exists a tilting subcategory $E$ for $R$ such that
$E$ is equivalent to $S$.
\end{thm}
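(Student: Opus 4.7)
The plan is to prove each direction using the classical Rickard--Keller strategy, relying on the fact that $\Kb(\prj R)$ coincides with the subcategory of compact objects in $\calD(\Mod R)$.

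For necessity, I would start from a triangle equivalence $F\colon \calD(\Mod S)\to\calD(\Mod R)$. Since $F$ preserves compactness, it restricts to an equivalence $\Kb(\prj S)\simeq \Kb(\prj R)$. Let $E$ be the full subcategory of $\Kb(\prj R)$ whose objects are the images under $F$ of the representables $S(\blank,s)$ for $s\in S$. Condition (a) of the tilting definition follows from $F$ being a triangle equivalence together with the projectivity of $S(\blank,s)$ in $\Mod S$; condition (b) follows because $\{S(\blank,s)\mid s\in S\}$ generates $\Kb(\prj S)$ as a thick subcategory and thick subcategories are preserved by triangle equivalences. The Yoneda lemma, combined with the fully faithfulness of $F$, yields the equivalence $E\simeq S$.

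For sufficiency, given a tilting subcategory $E$ for $R$ with $E\simeq S$, I would lift $E$ to a DG subcategory $\widetilde{E}$ of $\calC^b(\prj R)$, taking the same objects and using the Hom-complexes of $\calC^b(\prj R)$ as the morphism complexes. Condition (a) forces $H^0(\widetilde{E})\simeq E\simeq S$ and $H^i(\widetilde{E})=0$ for $i\ne 0$, so $\widetilde{E}$ is quasi-equivalent to $S$ regarded as a DG category concentrated in degree $0$. Keller's derived Morita theorem then gives a triangle equivalence $\calD(\widetilde{E})\simeq\calD(\Mod S)$. On the other hand, the inclusion $\widetilde{E}\hookrightarrow\calC^b(\prj R)$ induces a triangle functor $\Phi\colon\calD(\widetilde{E})\to\calD(\Mod R)$. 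Condition (a) makes $\Phi$ fully faithful on representable modules and hence on all of $\calD(\widetilde{E})$ after passing to triangles and coproducts, while condition (b) ensures that the essential image of $\Phi$, being a localizing subcategory containing every $R(\blank,x)$, is all of $\calD(\Mod R)$. Composing with the equivalence above yields $\calD(\Mod R)\simeq\calD(\Mod S)$.

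The principal difficulty is in the sufficiency direction, specifically in upgrading $\Phi$ from a fully faithful functor on the compact objects to an equivalence of unbounded derived categories. This requires Keller's DG framework, and the $\k$-flatness hypothesis on $R$ (in force throughout the section) is essential so that the Hom-complexes in $\calC^b(\prj R)$ compute the correct derived Hom-spaces and the DG tensor products behave homotopically as expected. Granted the results of Rickard and Keller, however, the verification is essentially formal, which is why the theorem is stated here merely by citing \cite{Ric} and \cite[9.2, Corollary]{Ke:DerDGcat}.
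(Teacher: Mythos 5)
The paper offers no proof of this theorem at all: it is quoted as a known result, attributed to Rickard \cite{Ric} and to \cite[9.2, Corollary]{Ke:DerDGcat}, so there is no internal argument to compare yours against. Your sketch is a correct reconstruction of the standard argument from those references --- identifying $\Kb(\prj R)$ with the compact objects of $\calD(\Mod R)$ for the necessity direction, and lifting $E$ to a DG subcategory of the complex category, using condition (a) to identify it up to quasi-equivalence with $S$ and condition (b) to see that the induced functor into $\calD(\Mod R)$ hits a localizing subcategory containing all representables --- and you correctly locate where the $\k$-flatness hypothesis enters. In short, your proposal supplies exactly the proof the paper delegates to the literature.
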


The following is a fundamental theorem of covering technique for
derived equivalence.

\begin{thm}
\label{fundamental-thm}
Let $G$ be a group and $R$ a category with a $G$-action
$($not necessarily a free action$)$.
Assume that there exists a $G$-stable tilting subcategory
$E$ for $R$.
Then $R/G$ and $E/G$ are derived equivalent.
\end{thm}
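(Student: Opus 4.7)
The plan is to apply the Rickard--Keller characterization (\thmref{der-eq}) to $R/G$ by exhibiting a tilting subcategory $E'$ for $R/G$ that is equivalent to $E/G$. The natural candidate is the image of $E$ under the pushdown $P\down\colon \Kb(\prj R) \to \Kb(\prj R/G)$, and the machinery developed earlier---the $G$-precovering property of $P\down$ (\thmref{pushdown-precov}) together with the universal property of orbit categories (\thmref{chrtrz-cov})---is designed precisely to identify this image with $E/G$.

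First I would define $E'$ as the full subcategory of $\Kb(\prj R/G)$ whose objects are $\{P\down U \mid U \in E\}$. Since $E$ is $G$-stable, the $G$-action on $\Kb(\prj R)$ restricts to $E$, and the restriction of $(P\down, \ph\down)$ to $E$ is a $G$-invariant functor $E \to E'$ which is dense by construction. Because the precovering formula $F^{(2)}_{U,V}$ at a pair $(U,V)$ of objects in $E$ only involves the $G$-orbit of $V$ inside $\Kb(\prj R)$, and $G$-stability of $E$ ensures this orbit already lies inside $E$, the restriction remains a $G$-precovering and hence a $G$-covering. Then the equivalence (1)$\Leftrightarrow$(4) of \thmref{chrtrz-cov} furnishes an equivalence $H\colon E/G \xrightarrow{\sim} E'$.

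Next I would verify the two tilting axioms for $E'$ in $\Kb(\prj R/G)$. For the $\Hom$-vanishing axiom, fix $U, V \in E$ and $i \ne 0$. Using that $P\down$ commutes with shift and applying the $G$-precovering isomorphism $F^{(2)}$ to $V[i]$, one obtains
\[
\Kb(\prj R/G)(P\down U, P\down V [i]) \iso \bigoplus_{\be \in G}\Kb(\prj R)(U, {}^{\be}V[i]),
\]
which vanishes because each ${}^{\be}V$ lies in $E$ by $G$-stability and $E$ satisfies the tilting $\Hom$-vanishing. For the generation axiom, the description of $P\down$ on representables (recorded after \thmref{pushdown-precov1}) gives $P\down R(\blank, x) \iso (R/G)(\blank, Px)$ for every $x \in R$; since $P\down$ is an exact functor between triangulated categories sending $\prj R$ to $\prj R/G$, it maps $\thick E$ into $\thick E'$, so $R(\blank, x) \in \thick E$ forces $(R/G)(\blank, Px) \in \thick E'$. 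As $P$ is bijective on objects, this covers all representables of $R/G$.

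Combining the equivalence $E/G \simeq E'$ with the tilting property of $E'$, \thmref{der-eq} yields the desired derived equivalence between $R/G$ and $E/G$. The main obstacle I foresee is the thick-subcategory step for the generation axiom: it requires a clean verification that $P\down$ preserves the triangulated structure (shifts, cones, and direct summands) and that the canonical isomorphism $P\down R(\blank, x) \iso (R/G)(\blank, Px)$ matches the representable used implicitly in the definition of a tilting subcategory for $R/G$.
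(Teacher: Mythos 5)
Your proposal is correct and follows essentially the same route as the paper: define $E'$ as the image of $E$ under the pushdown, use the $G$-precovering isomorphism of Theorem \ref{pushdown-precov} for the $\Hom$-vanishing and the identification $E' \simeq E/G$ via Theorem \ref{chrtrz-cov}, and use $P\down R(\blank,x)\iso (R/G)(\blank,Px)$ for the generation axiom. The extra care you take with the restriction of the precovering to $E$ and with the thick-subcategory step is sound but does not change the argument.
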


\begin{proof}
Set $E'$ to be the full subcategory of $\Kb(\prj R/G)$
consisting of the objects $P_{\Cdot}U$ with $U \in E$.
By \thmref{der-eq} we have only to show that
$E'$ is a tilting subcategory for $R/G$ and that
$E'$ is equivalent to $E/G$.
Now for each $U, V \in E$ and for each integer $i \ne 0$
Theorem \ref{pushdown-precov} shows that
$\Kb(\prj R/G)(P\down U,P\down V[i])
\iso \Ds_{\al\in G}\Kb(\prj R)({}^{\al}U,V[i])=0$
because ${}^{\al}U \in E$.
Next for each $x\in R/G$ we have
$(R/G)(\blank, x) \iso P_{\Cdot}(R(\blank, x)) \in P_{\Cdot}(\thick E)
\subseteq \thick E'$.
Therefore $E'$ is a tilting subcategory for $R/G$.
Finally, since the restriction of
$P_{\Cdot}\colon \Kb(\prj R) \to \Kb(\prj R/G)$ to $E$ induces
a $G$-precovering $E \to E'$ that is dense,
$E'$ is equivalent to $E/G$ by \thmref{chrtrz-cov}.
\end{proof}

\begin{dfn}\label{dfn:equivariant}
Let $\calC, \calC'$ be categories with $G$-actions
and $F\colon \calC \to \calC'$ a functor.
Then an {\em equivariance adjuster} of $F$ is a family
$\ro = (\ro_{\al})_{\al\in G}$ of natural isomorphisms
$\ro_{\al}\colon A_{\al}F \to FA_{\al}$ ($\al\in G$)
such that the following diagram commutes for each $\al.\be\in G$
$$
\xymatrix{
A_{\be\al}F=A_{\be}A_{\al}F \ar[r]^{A_{\be}\ro_{\al}}\ar[rd]_{\ro_{\be\al,x}}
& A_{\be}FA_{\al} \ar[d]^{\ro_{\be,\al}}\\
&FA_{\be\al}=FA_{\be}A_{\al},
}
$$
and the pair $(F,\ro)$ is called a $G$-{\em equivariant}
functor.
In particular, $F$ is called a {\em strictly} $G$-{\em equivariant}
functor if the $\ro_{\al}$ above can be taken to be the identity, i.e.,
if $A_{\al}F=FA_{\al}$ for all $\al\in G$.
\end{dfn}

\begin{rmk}\label{rmk:equivariant}
In the setting of Definition \ref{dfn:equivariant} 
let $P=(P,\ph)\colon \calC' \to \calC'/G$ be
the canonical $G$-covering functor.
For each $\al \in G$ define a natural isomorphism
$\ph'_{\al}\colon PF \to PFA_{\al}$ by
$$
\ph'_{\al}:= (P\ro_{\al})\circ (\ph_{\al}F)
\colon PF \to PA_{\al}F \to P FA_{\al},
$$
and set $\ph':=(\ph'_{\al})_{\al\in G}$.
Then a direct calculation shows that
$\ph'$ is an invariance adjuster if and only if
$\ro$ is an equivariance adjuster.
\end{rmk}

\begin{lem}
\label{equivariant-eq}
Let $\calC, \calC'$ be categories with $G$-actions, and
$(F, \ro) \colon \calC \to \calC'$ a
$G$-equivariant equivalence.
Then $\calC/G$ and $\calC'/G$ are equivalent.
\end{lem}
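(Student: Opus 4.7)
The plan is to produce an equivalence $H\colon \calC/G \to \calC'/G$ by invoking the universal property of the canonical $G$-covering $P\colon \calC \to \calC/G$ together with the characterization of $G$-coverings given in Theorem \ref{chrtrz-cov}. Let $P'\colon \calC' \to \calC'/G$ be the canonical $G$-covering with invariance adjuster $\ph'$. By Remark \ref{rmk:equivariant}, setting $\ph''_{\al} := (P'\ro_{\al}) \circ (\ph'_{\al}F)$ for each $\al \in G$ turns $P'F$ into a $G$-invariant functor $(P'F,\ph'')\colon \calC \to \calC'/G$.

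My next step is to show that $(P'F,\ph'')$ is in fact a $G$-covering. Density is immediate: $F$ is dense (being an equivalence) and $P'$ is dense, so $P'F$ is dense. For the precovering condition, I would exhibit a commutative diagram
$$
\xymatrix@C=2em{
\Ds_{\al\in G}\calC(\al x, y) \ar[r]^{F}\ar[rd]_{(P'F)^{(1)}_{x,y}} & \Ds_{\al\in G}\calC'(F\al x, Fy) \ar[r]^{(\ro_{\al,x})^*} & \Ds_{\al\in G}\calC'(\al Fx, Fy) \ar[d]^{(P')^{(1)}_{Fx,Fy}}\\
 & & (\calC'/G)(P'Fx, P'Fy),
}
$$
where the top-left map is $F$ applied component-wise (an isomorphism because $F$ is fully faithful), the middle map is precomposition with the isomorphisms $\ro_{\al,x}\colon A'_{\al}Fx \to FA_{\al}x$ (an isomorphism since each $\ro_{\al,x}$ is), and the right-hand vertical map is an isomorphism by \prpref{pi-universal}(1) applied to $P'$. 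Commutativity of the outer triangle boils down to the identity $\ph''_{\al,x} = (P'\ro_{\al,x})\circ (\ph'_{\al,Fx})$ combined with the formula defining $(P'F)^{(1)}_{x,y}$; it therefore follows that $(P'F)^{(1)}_{x,y}$ is an isomorphism for every $x,y\in \calC$.

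Having established that $(P'F,\ph'')$ is a $G$-covering, Theorem \ref{chrtrz-cov} (equivalence of (1) and (5)) yields an equivalence $H\colon \calC/G \to \calC'/G$ with $(P'F,\ph'') = (HP, H\ph)$, which completes the proof. The main obstacle is the diagram chase in the second step: one must unpack the definition of $\ph''$ and use the naturality of $\ro_{\al}$, together with the precise form of the adjuster $\ph'$ for the canonical functor $P'$, to confirm that the factorization of $(P'F)^{(1)}_{x,y}$ through the fully-faithful action of $F$ and the adjuster isomorphisms $\ro_{\al,x}$ is exactly what Theorem \ref{chrtrz-cov} needs. Everything else is essentially formal manipulation of universal properties.
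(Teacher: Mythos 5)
Your proposal is correct and follows essentially the same route as the paper's proof: define the invariance adjuster on $P'F$ via Remark \ref{rmk:equivariant}, verify density from the density of $F$ and $P'$, establish the precovering condition by factoring $(P'F)^{(1)}_{x,y}$ through the componentwise action of $F$ and precomposition with the $\ro_{\al,x}$, and conclude via Theorem \ref{chrtrz-cov}. The commutative diagram you describe is exactly the one used in the paper.
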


\begin{proof}
Let $P=(P,\ph)\colon \calC' \to \calC'/G$ be
the canonical $G$-covering functor.
Define a family $\ph'=(\ph'_{\al})_{\al\in G}$ of natural isomorphisms
$\ph'_{\al}\colon PF \to PF A_{\al}$ ($\al \in G$)
as in Remark \ref{rmk:equivariant} above.
Then as stated there $\ph'$ is an invariance adjuster
and the pair $(PF, \ph')$
becomes a  $G$-invariant functor $\calC \to \calC'/G$.
We show that it is a $G$-covering functor.
First, since $F$ is an equivalence, $PF$ is dense.
Next, by the definition of $\ph'$ we have
the following commutative diagram:
$$
\xymatrix@C15ex{
\Ds_{\al\in G}\calC(\al x, y) \ar[r]^{(PF)^{(1)}_{x,y}}
\ar[d]_{\Ds_{\al\in G}F_{\al x,y}}
&\calC'/G(PFx, PFy)\\
\Ds_{\al\in G}\calC'(F\al x,Fy)\ar[r]_{\Ds_{\al\in G}\calC'(\ro_{\al,x},F y)}&
\Ds_{\al\in G}\calC'(\al Fx,Fy),
\ar[u]_{P^{(1)}_{Fx,Fy}}
}
$$
where the vertical morphisms and the bottom morphism
are isomorphisms by assumptions,
which shows that $(PF, \ph')$ is a $G$-precovering.
Thus $(PF, \ph')$ turns out to be a $G$-covering.
Hence $\calC/G$ and $\calC'/G$ are equivalent
by Theorem \ref{chrtrz-cov}.
\end{proof}

In applications of this section
we usually deal with the case that $E=\calC$ and $S=\calC'$ are
basic categories and $\ps=F$ is a strictly $G$-equivariant isomorphism
between them.
The notion of $G$-equivariant functors plays an essential role
in section 5.

\begin{thm}\label{thm:orbit-cat}
Let $G$ be a group and $R, S$
categories with
$G$-actions $($not necessarily free actions$)$.
Assume that there exists a $G$-stable tilting subcategory
$E$ for $R$ and a $G$-equivariant equivalence
$E \to S$.
Then $R/G$ and $S/G$ are derived equivalent.
\end{thm}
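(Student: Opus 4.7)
The plan is to chain together the two results that have already been set up for exactly this purpose: Theorem~\ref{fundamental-thm} and Lemma~\ref{equivariant-eq}. The statement is essentially a formal combination, so the work consists of checking that the hypotheses line up.

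First, I would apply Theorem~\ref{fundamental-thm} to the $G$-stable tilting subcategory $E$ for $R$. For this I need to know that the $G$-action on $\Kb(\prj R)$ restricts to a genuine $G$-action (by strict automorphisms) on $E$. This is immediate: the $G$-action on $R$ consists of strict automorphisms, these induce strict automorphisms of $\Mod R$ and hence of $\Kb(\prj R)$, and $G$-stability of $E$ ensures these automorphisms restrict to automorphisms of $E$. Theorem~\ref{fundamental-thm} then yields a derived equivalence between $R/G$ and $E/G$.

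Second, the hypothesis provides a weakly $G$-equivariant equivalence $E \to S$, which is precisely the input required by Lemma~\ref{equivariant-eq}. Applying that lemma gives an ordinary equivalence of categories $E/G \simeq S/G$. Any equivalence of categories induces an equivalence of module categories and hence of derived categories, so $E/G$ and $S/G$ are derived equivalent.

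Composing the two derived equivalences $\calD(\Mod R/G) \simeq \calD(\Mod E/G) \simeq \calD(\Mod S/G)$ yields the conclusion. There is no real obstacle here beyond the compatibility check for the first step noted above; the novelty of the theorem lies not in its proof but in the fact that we are now permitted to carry it out without the classical freeness and local-boundedness hypotheses, since those have been eliminated in the earlier development that underlies Theorem~\ref{fundamental-thm} and Lemma~\ref{equivariant-eq}.
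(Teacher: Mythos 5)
Your proposal is correct and matches the paper's proof exactly: the paper also derives the statement by combining Theorem~\ref{fundamental-thm} (giving a derived equivalence between $R/G$ and $E/G$) with Lemma~\ref{equivariant-eq} (giving an equivalence $E/G \simeq S/G$). The additional compatibility checks you spell out are sound and simply make explicit what the paper leaves implicit.
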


\begin{proof}
This follows from Theorem \ref{fundamental-thm} and
Lemma \ref{equivariant-eq}.
\end{proof}

This together with the remark in Definition \ref{dfn:skew-gp-cat} shows
the following.
\begin{cor}\label{thm:skewgp-cat}
Let $G$ be a group and $R, S$ categories with
$G$-actions $($not necessarily free actions$)$.
Assume that there exists a $G$-stable tilting subcategory
$E$ for $R$ and a $G$-equivariant equivalence
$E \to S$.
Then $R*G$ and $S*G$ are derived equivalent.
\end{cor}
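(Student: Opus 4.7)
The plan is to reduce Corollary \ref{thm:skewgp-cat} to Theorem \ref{thm:orbit-cat} by passing through the Morita equivalences between orbit categories and skew group categories.

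First I would invoke Theorem \ref{thm:orbit-cat} directly with the given hypotheses: since $E$ is a $G$-stable tilting subcategory for $R$ and there is a weakly $G$-equivariant equivalence $E \to S$, the theorem gives that $R/G$ and $S/G$ are derived equivalent. This is the main content and does all the real work.

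Next I would use the remark at the end of Definition \ref{dfn:skew-gp-cat}, which records that $\calC/G$ and $\calC*G = \bas(\sic(\calC/G))$ are Morita equivalent; this follows from Proposition \ref{sic-Morita} (the split idempotent completion induces an equivalence on module categories) together with the fact that the canonical embedding $\bas(\calC/G) \to \calC/G$ from Definition \ref{basic} is an equivalence of categories and hence also induces an equivalence of module categories. Applying this to both $R$ and $S$, I get equivalences $\Mod(R*G) \simeq \Mod(R/G)$ and $\Mod(S*G) \simeq \Mod(S/G)$.

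Finally, since equivalences of module categories pass to equivalences of the corresponding derived categories as triangulated categories, I obtain
\[
\calD(\Mod(R*G)) \simeq \calD(\Mod(R/G)) \simeq \calD(\Mod(S/G)) \simeq \calD(\Mod(S*G)),
\]
where the middle equivalence is the derived equivalence from Theorem \ref{thm:orbit-cat}. By the definition of derived equivalence given just before Theorem \ref{der-eq}, this says precisely that $R*G$ and $S*G$ are derived equivalent. There is no real obstacle here; the statement is essentially a formal consequence, and the only thing to check carefully is that the Morita equivalences built into the definition of $\calC*G$ (first splitting idempotents, then passing to a basic subcategory) are indeed equivalences on module categories and hence induce triangle equivalences on derived categories.
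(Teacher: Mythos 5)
Your proposal is correct and follows exactly the route the paper intends: the paper's own justification is the single sentence ``This together with the remark in Definition \ref{dfn:skew-gp-cat} shows the following,'' i.e., Theorem \ref{thm:orbit-cat} gives the derived equivalence of $R/G$ and $S/G$, and the Morita equivalence $\calC/G \simeq_{\mathrm{Morita}} \calC*G$ (via Proposition \ref{sic-Morita} and the basic-subcategory equivalence) transfers it to $R*G$ and $S*G$. You have merely spelled out the details the paper leaves implicit.
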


\section{Smash products and orbit categories}

In this section we generalize a result in \cite{C-M}  giving a relationship
between smash products and orbit categories,
namely we prove it without an assumption that the $G$-action is free.

We cite the following two definitions from \cite{C-M}.

\begin{dfn}
(1) A $G$-{\em graded} category is a category $\calB$ having a family of
direct sum decompositions
$$\calB(x,y) = \Ds_{\al\in G}\calB^{\al}(x,y)$$
($x, y \in \calB$) of $\k$-modules such that
the composition of morphisms gives the inclusions
$\calB^{\be}(y, z) \cdot \calB^{\al}(x,y) \subseteq \calB^{\be\al}(x,z)$
for all $x,y,z\in \calB$ and $\al, \be \in G$.

(2) For each $f\in \calB(x,y)$ we set $\deg f:= \al$ if $f\in \calB^{\al}(x,y)$
for some $\al \in G$ (obviously such an $\al$ is uniquely determined
by $f$ if it exists).

(3) A functor $H\colon \calB \to \calB'$ of $G$-graded categories
is called {\em degree-preserving} if
$H(\calB^{\al}(x, y)) \subseteq \calB'^{\al}(Hx,Hy)$
for all $x, y \in \calB$ and $\al\in G$.
\end{dfn}

\begin{dfn}
Let $\calB$ be a $G$-graded category.
Then the smash product $\calB \# G$ of $\calB$ and $G$ is a category defined as follows:
\begin{itemize}
\item
$\obj(\calB \# G):= \obj(\calB) \times G$ (we set $x^{(\al)}:=(x,\al)$ for all $(x,\al) \in \obj(\calB \# G)$);
\item
$(\calB \#G)(x^{(\al)}, y^{(\be)}):= \calB^{\be\inv\al}(x, y)$
for each $x^{(\al)}, y^{(\be)} \in \obj(\calB \# G)$; and
\item
The composition
$$(\calB \# G)(y^{(\be)}, z^{(\ga)}) \times (\calB \# G)(x^{(\al)},y^{(\be)}) \to (\calB \# G)(x^{(\al)},z^{(\ga)})$$
is given by the composition
$$\calB^{\ga\inv\be}(y, z) \times \calB^{\be\inv\al}(x,y) \to \calB^{\ga\inv\al}(x,z)$$
of $\calB$
for each $x^{(\al)}, y^{(\be)}, z^{(\ga)} \in \obj(\calB \# G)$.
\end{itemize}
\end{dfn}

\begin{rmk}
When $\calB =R$ is a graded algebra, i.e.,
a graded category with only one object, with a direct sum decomposition
$R = \Ds_{\al\in G}R^{\al}$, the algebra corresponding to the category
$R\# G$ is given by
$$
\ds(R\# G) = \Ds_{(\al,\be)\in G\times G}R^{\be\inv\al}
$$
with the usual matrix multiplication (see the formula \eqref{cat-alg}).
When $G$ is a finite group, this coincides with the smash product
construction given by Quinn \cite{Qu}.
\end{rmk}

\begin{lem}
Let $\calC$ be a category with a $G$-action.
Then $\calC/G$ is $G$-graded.
\end{lem}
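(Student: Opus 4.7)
The plan is to define, for each $\mu\in G$, the $\k$-submodule
\[
(\calC/G)^{\mu}(x,y) := \{f=(f_{\be,\al})_{(\al,\be)}\in (\calC/G)(x,y)\mid f_{\be,\al}=0 \text{ unless } \al = \be\mu\}
\]
and to verify that these give a $G$-grading. The choice of index convention is dictated by Proposition \ref{three-orbit-cats}: under $S^{(1)}\colon \calC/G \isoto \calC\fstorbit G$, the element $(\de_{\al,\be\mu}f_{\be,\al})_{(\al,\be)}$ should correspond to a pure degree-$\mu$ element of $\calC\fstorbit G(x,y)=\Ds_{\al\in G}\calC(\al x,y)$.

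First I would check that each $(\calC/G)^{\mu}(x,y)$ really is a submodule of $(\calC/G)(x,y)$: for $f\in(\calC/G)(x,y)$, define $f^{(\mu)}\in \Pi'(x,y)$ by $f^{(\mu)}_{\be,\al}:= f_{\be,\al}$ if $\al=\be\mu$ and $0$ otherwise. The row/column finiteness of $f^{(\mu)}$ is inherited from $f$, and $G$-invariance is an immediate check: for any $\ga\in G$, $f^{(\mu)}_{\ga\be,\ga\al}$ is nonzero precisely when $\ga\al=\ga\be\mu$, i.e.~$\al=\be\mu$, in which case it equals $f_{\ga\be,\ga\al}=\ga(f_{\be,\al})=\ga(f^{(\mu)}_{\be,\al})$.

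Next, to obtain the direct sum decomposition, observe that for each pair $(\al,\be)$ there is exactly one $\mu\in G$ (namely $\mu=\be\inv\al$) such that $f^{(\mu)}_{\be,\al}$ can be nonzero; hence $f = \sum_{\mu\in G}f^{(\mu)}$ entrywise, with uniqueness of the decomposition being clear. The sum has finite support: via $S^{(1)}$, the element $f$ is sent to $(f_{1,\al})_{\al\in G}\in\Ds_{\al\in G}\calC(\al x,y)$, which has finite support by definition of the direct sum; and by $G$-invariance, $f^{(\mu)}=0$ iff $f_{1,\mu}=0$. This gives $(\calC/G)(x,y)=\Ds_{\mu\in G}(\calC/G)^{\mu}(x,y)$.

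Finally, I would verify that composition respects the grading. For $f\in(\calC/G)^{\mu}(x,y)$ and $g\in(\calC/G)^{\nu}(y,z)$, the formula $(gf)_{\be,\al}=\sum_{\ga\in G}g_{\be,\ga}\cdot f_{\ga,\al}$ has a potentially nonzero summand only when $\ga=\be\nu$ (forced by $g\in(\calC/G)^{\nu}$) and $\al=\ga\mu=\be(\nu\mu)$ (forced by $f\in(\calC/G)^{\mu}$), so $gf\in(\calC/G)^{\nu\mu}(x,z)$ as required. No serious obstacle is expected; the only mild subtlety is ensuring the decomposition $f=\sum_\mu f^{(\mu)}$ makes sense in the presence of the row/column-finite (but possibly infinite) matrices used to define $(\calC/G)(x,y)$, which is resolved by passing through the isomorphism with $\calC\fstorbit G$ where the finite-support condition is immediate.
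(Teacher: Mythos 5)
Your proof is correct and is essentially the paper's argument: your explicitly described components $(\calC/G)^{\mu}(x,y)$ coincide with the paper's definition $(\calC/G)^{\mu}(x,y):=P^{(1)}_{x,y}(\calC(\mu x,y))$ (this is exactly the content of Remark \ref{degree}), and the direct sum decomposition comes from the same isomorphism with $\calC\fstorbit G$. The only difference is that you spell out the compatibility of composition with the grading, which the paper dismisses as "easily seen."
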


\begin{proof}
Let $P\colon \calC \to \calC/G$ be the canonical $G$-covering functor.
For each $x, y \in \obj(\calC)=\obj(\calC/G)$ we have an isomorphism
$P^{(1)}_{x,y}\colon \Ds_{\al\in G}\calC(\al x, y) \to (\calC/G)(P x, P y)$ having $\si^{(1)}_{x,y}$ as the
inverse.
Therefore by setting $(\calC/G)^{\al}(x,y):=P^{(1)}_{x,y}(\calC(\al x, y))$ for all $\al \in G$,
we have $(\calC/G)(x,y) = \Ds_{\al\in G}(\calC/G)^{\al}(x,y)$.
As easily seen $\calC/G$ together with these decompositions turns out to be a $G$-graded category.
\end{proof}

\begin{rmk}\label{degree}
In the lemma above,
let $\be\in G$ and $f\in \calC/G(P y, P x)$ with $x,y\in \calC$.
Then $f\in (\calC/G)^{\be}(P y, P x)$ (i.e.~$\deg f = \be$)
if and only if
$f_{\mu,\la} = \de_{\mu\inv\la,\be}f_{\mu,\la}$ for all $\la,\mu\in G$.

Indeed, 
\begin{align*}
f\in (\calC/G)^{\be}(P y, P x)
&\iff \si^{(1)}_{y,x}(f) =(f_{1,\la})_{\la\in G} \in \calC(\be y, x) \subseteq \Ds_{\la\in G}\calC(\la y, x)\\
&\iff \forall \la\in G, f_{1,\la} = \de_{\la,\be}f_{1,\la} \\
&\iff \forall \la,\mu \in G, f_{\mu,\la} = \mu(f_{1,\mu\inv\la})
=\mu(\de_{\mu\inv\la,\be}f_{1,\mu\inv\la})=\de_{\mu\inv\la,\be}f_{\mu,\la}.
\end{align*}
\qed
\end{rmk}

A statement similar to the following is stated
in \cite[Proposition 3.2]{C-M}.
We give a proof that does not use their orbit category $\calC\freeorbit G$.
(Since by our definition $\obj(\calC/G) = \obj(\calC)$ for a category $\calC$ with
a $G$-action, we cannot state that
$(\calB \# G)/G$ is {\em isomorphic} to $\calB$ in general.)

\begin{prp}
\label{smash-orb}
Let $\calB$ be a $G$-graded category.
Then the smash product $\calB \# G$ is a category with a free $G$-action,
and there is a degree-preserving equivalence $\calB \to (\calB \# G)/G$ of
$G$-graded categories.
\end{prp}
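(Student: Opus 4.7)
The plan is as follows. First, I would equip $\calB\#G$ with a strict $G$-action defined on objects by $A_{\al}(x^{(\be)}) := x^{(\al\be)}$ and on morphisms by letting $A_{\al}$ act as the identity on each element, using the identity $\be\inv\ga = (\al\be)\inv(\al\ga)$ to regard $f\in \calB^{\be\inv\ga}(x,y) = (\calB\#G)(x^{(\ga)}, y^{(\be)})$ as a morphism in $(\calB\#G)(x^{(\al\ga)}, y^{(\al\be)})$. Functoriality of each $A_{\al}$ and the homomorphism law $A_{\al}A_{\al'} = A_{\al\al'}$ are immediate, and freeness follows from $\al\be = \be \Rightarrow \al = 1$.

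Next, I would define a functor $F\colon \calB \to (\calB\#G)/G$ by $F(x) := x^{(1)}$ and, for $h \in \calB(x,y)$ with graded decomposition $h = \sum_{\mu\in G} h_{\mu}$ ($h_{\mu}\in \calB^{\mu}(x,y)$), by
$$F(h)_{\be,\ga} := h_{\be\inv\ga} \in \calB^{\be\inv\ga}(x,y) = (\calB\#G)(x^{(\ga)}, y^{(\be)}).$$
Row- and column-finiteness hold because only finitely many $h_{\mu}$ are nonzero, and, since the $G$-action on each morphism space of $\calB\#G$ is literally the identity, the invariance condition $F(h)_{\mu\be,\mu\ga} = \mu\,F(h)_{\be,\ga}$ collapses to the tautology $h_{(\mu\be)\inv(\mu\ga)} = h_{\be\inv\ga}$. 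Functoriality of $F$ then follows from a direct calculation matching the composition rule in $\calB\#G$ against the graded-component identity $(gh)_{\mu} = \sum_{\al\be = \mu} g_{\al} h_{\be}$ in $\calB$; the identity morphism $\id_{x^{(1)}} = (\de_{\ga,\be}\id_{x})_{(\ga,\be)}$ comes out correctly from $F(\id_x)_{\be,\ga} = (\id_x)_{\be\inv\ga}$.

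For fully faithfulness, I would examine an arbitrary $(g_{\be,\ga}) \in ((\calB\#G)/G)(x^{(1)}, y^{(1)})$. The invariance $g_{\mu\be,\mu\ga} = g_{\be,\ga}$ (literal equality in $\calB^{\be\inv\ga}(x,y)$) forces $g_{\be,\ga}$ to depend only on $\be\inv\ga$, so $g_{\be,\ga} = h_{\be\inv\ga}$ for a family $(h_{\mu})_{\mu\in G}$ with $h_{\mu}\in \calB^{\mu}(x,y)$; both row- and column-finiteness then reduce to the single condition that $\{\mu \mid h_{\mu}\ne 0\}$ be finite. The assignment $(g_{\be,\ga}) \mapsto \sum_{\mu} h_{\mu} \in \calB(x,y)$ is therefore an inverse of $F_{x,y}$. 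Density follows because for each object $x^{(\al)}$ the invariance adjuster $\ph$ of the canonical $G$-covering $P\colon \calB\#G \to (\calB\#G)/G$ provides an isomorphism $\ph_{\al, x^{(1)}}\colon x^{(1)} \to x^{(\al)}$ in $(\calB\#G)/G$, so every object of $(\calB\#G)/G$ is isomorphic to some $F(x)$.

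Finally, the degree-preserving property is immediate from Remark \ref{degree}: $F(h) \in ((\calB\#G)/G)^{\al}(x^{(1)}, y^{(1)})$ iff $F(h)_{\be,\ga} = \de_{\be\inv\ga,\al}\,F(h)_{\be,\ga}$, which amounts to $h_{\mu} = 0$ for $\mu\ne \al$, that is, to $h\in \calB^{\al}(x,y)$. The main piece of bookkeeping is keeping the three indices (source shift $\ga$, target shift $\be$, and degree $\be\inv\ga$) consistent throughout; once the identity $(\al\be)\inv(\al\ga) = \be\inv\ga$ is internalized, every verification reduces to routine index manipulation.
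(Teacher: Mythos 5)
Your proof is correct and follows essentially the same route as the paper: the same free $G$-action on $\calB\#G$ (shift on objects, identity on morphisms), and your functor $F$ with $F(h)_{\be,\ga}=h_{\be\inv\ga}$ is exactly the paper's $\om_{\calB}=P^{(1)}_{x^{(1)},y^{(1)}}$, with the same density argument via $\ph_{\al,x^{(1)}}$. The only difference is presentational: you unwind the bijectivity of $P^{(1)}$ and the degree computation by hand, where the paper simply quotes Proposition~\ref{pi-universal}(1) and the definition of the induced grading on the orbit category.
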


\begin{proof}
First we give a $G$-action on $\calB \# G$.

On objects: $\mu x^{(\al)}:= x^{(\mu\al)}$ for each $\mu \in G$ and each $x^{(\al)}\in \obj(\calB\# G)$;

On morphisms: $\mu f:= f$ for each $\mu \in G$ and
each $f\in (\calB\# G)(x^{(\al)}, y^{(\be)})=\calB^{\be\inv\al}(x,y) = (\calB\# G)(\mu x^{(\al)}, \mu y^{(\be)})$
with $x^{(\al)}, y^{(\be)} \in \obj(\calB\# G)$.

Then it is easy to verify that the action of each $\mu \in G$ defined above is an automorphism of
the category $\calB\# G$ and that this defines a $G$-action on $\calB\# G$.
This $G$-action is free because $\mu x^{(\al)} = x^{(\al)}$ implies $\mu\al=\al$ and $\mu=1$.
With this free $G$-action we consider $(\calB\# G)/G$.

Let $(P, \ph)\colon \calB\# G \to (\calB\# G)/G$
be the canonical $G$-covering functor.
We define a functor $\om_{\calB}\colon \calB \to (\calB\# G)/G$ as follows.

On objects: $\om_{\calB}x:= P(x^{(1)})$ for each $x\in \calB$; and

On morphisms: $\om_{\calB}f:= P^{(1)}_{x^{(1)},y^{(1)}}(f)$
for all $x, y \in \calB$ and $f \in \calB(x,y)$.
Using the definition of $G$-action on $\calB\# G$,
it is easy to verify that $\om_{\calB}$ is a functor.
Since the isomorphism $P^{(1)}_{x^{(1)},y^{(1)}}$ sends
$\calB(x,y)=\Ds_{\al\in G}\calB^{\al}(x,y)
=\Ds_{\al\in G}(\calB\# G)(x^{(\al)}, y^{(1)})
=\Ds_{\al\in G}(\calB\# G)(\al x^{(1)}, y^{(1)})$
onto
$((\calB\# G)/G)(P(x^{(1)}), P(y^{(1)})) = ((\calB\# G)/G)(\om_{\calB}x, \om_{\calB}y)$,
and each $\calB^{\al}(x,y)$ onto $((\calB\# G)/G)^{\al}(\om_{\calB}x, \om_{\calB}y)$,
$\om_{\calB}$ is fully faithful and degree-preserving.
Finally, for each $P(x^{(\al)}) \in \obj((\calB\# G)/G)$ (with $x^{(\al)} \in \calB\# G$), 
we have $P(x^{(\al)}) = P(\al x^{(1)}) \iso P(x^{(1)})= \om_{\calB}x$ in $(\calB\# G)/G$.
Thus $\om_{\calB}$ is dense.
As a consequence, $\om_{\calB}$ is a degree-preserving equivalence of $G$-graded categories.
\end{proof}

\begin{dfn}
Let $\calB$ be a $G$-graded category.
Then we define a functor $Q\colon \calB\# G \to \calB$ as follows.

On objects: $Q x^{(\al)}:= x$ for all $x^{(\al)}\in \calB\# G$.

On morphisms: $Q f:= f$ for all $f\in (\calB\# G)(x^{(\al)},y^{(\be)})
= \calB^{\be\inv\al}(x,y) \subseteq \calB(x,y)$ and for all
$x^{(\al)},y^{(\be)} \in \calB\# G$.
\end{dfn}

\begin{prp}\label{G-covering-assoc-smash}
$Q = QA_{\al}$ for all $\al\in G$ and $Q=(Q, \id)\colon \calB\# G \to \calB$ is
a $G$-covering functor,
where $\id$ denotes the invariance adjuster $\id = (\id_{Q}:Q \to QA_{\al})_{\al\in G}$.
\end{prp}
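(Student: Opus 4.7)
The plan is to simply unpack the definitions and verify the three required properties.

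First I would check the strict equality $Q = QA_{\al}$ for every $\al \in G$. By construction, $QA_{\al}(x^{(\be)}) = Q(x^{(\al\be)}) = x = Q(x^{(\be)})$, and on a morphism $f \in (\calB\# G)(x^{(\be)},y^{(\ga)})$ we have $QA_{\al}(f) = Q(f) = f$ since the $G$-action on $\calB\# G$ acts trivially on morphisms. Hence the family $\id = (\id_{Q}\colon Q\to QA_{\al})_{\al\in G}$ is a well-defined family of natural isomorphisms, and both axioms of Definition~\ref{admissible} hold trivially, so $(Q,\id)$ is a $G$-invariant functor.

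Next I would verify density: for each $x\in\calB$ we have $x = Q(x^{(1)})$, so $Q$ is not merely dense but surjective on objects.

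The main (and only nontrivial) step is to show that $Q$ is a $G$-precovering, i.e.\ that
\[
Q^{(1)}_{x^{(\al)},y^{(\be)}}\colon \bigoplus_{\mu\in G}(\calB\#G)(\mu x^{(\al)}, y^{(\be)}) \longrightarrow \calB(Q x^{(\al)}, Q y^{(\be)}) = \calB(x,y)
\]
is an isomorphism of $\k$-modules for all $x^{(\al)}, y^{(\be)}\in \calB\#G$. By definition of the $G$-action on $\calB\#G$ and of the Hom-spaces in the smash product,
\[
(\calB\#G)(\mu x^{(\al)}, y^{(\be)}) = (\calB\#G)(x^{(\mu\al)}, y^{(\be)}) = \calB^{\be^{-1}\mu\al}(x,y).
\]
Since $\mu \mapsto \be^{-1}\mu\al$ is a bijection of $G$, as $\mu$ ranges over $G$ the exponent $\ga := \be^{-1}\mu\al$ ranges once over $G$, giving a canonical identification with $\bigoplus_{\ga\in G}\calB^{\ga}(x,y) = \calB(x,y)$. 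Since $\ph_{\mu,x^{(\al)}}$ is the identity and $Q$ acts as the inclusion on morphisms, the map $Q^{(1)}_{x^{(\al)},y^{(\be)}}$ sends $(f_{\mu})_{\mu}$ to $\sum_{\mu\in G}f_{\mu}$, which under the above reindexing is precisely the canonical summation isomorphism $\bigoplus_{\ga\in G}\calB^{\ga}(x,y) \isoto \calB(x,y)$ coming from the $G$-grading of $\calB$. Hence $Q^{(1)}_{x^{(\al)},y^{(\be)}}$ is an isomorphism, and combined with density this shows that $(Q,\id)$ is a $G$-covering functor.

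There is no real obstacle here beyond careful bookkeeping; the only point one must notice is the reindexing bijection $\mu\leftrightarrow \be^{-1}\mu\al$ that converts the indexed direct sum over $G$-orbit elements in $\calB\#G$ into the $G$-graded decomposition of $\calB(x,y)$.
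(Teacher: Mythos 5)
Your proof is correct and is exactly the ``straightforward'' verification the paper leaves to the reader (its proof of this proposition consists of the single word ``Straightforward''). The key point you identify --- that under the reindexing $\mu\mapsto\be^{-1}\mu\al$ the map $Q^{(1)}_{x^{(\al)},y^{(\be)}}$ becomes the canonical summation isomorphism $\bigoplus_{\ga\in G}\calB^{\ga}(x,y)\to\calB(x,y)$ built into the definition of a $G$-graded category --- is precisely the intended argument.
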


\begin{proof}
Straightforward.
\end{proof}

\begin{rmk}
Let $(P,\ph)\colon \calB\# G \to (\calB\# G)/G$ be the canonical
$G$-covering functor.
Then by Proposition \ref{G-covering-assoc-smash} and Theorem \ref{chrtrz-cov}
we have an equivalence
$H\colon (\calB\# G)/G \to \calB$ such that $(Q,\id) = (HP,H\ph)$.
But $H$ is not degree-preserving in general.
Indeed, we have
$$
H(((\calB\# G)/G)^{\mu}(x^{(\al)},y^{(\be)}))
\subseteq \calB^{\be\inv\mu\al}(x,y)
$$
for all $\mu \in G$ and $x^{(\al)},y^{(\be)}\in (\calB\# G)/G$.
\end{rmk}

The following is a generalization of \cite[Theorem 3.8]{C-M}.

\begin{thm}
\label{liberalization}
Let $\calC$ be a category with a $G$-action $($not necessarily a free action$)$.
Then there is a $G$-equivariant equivalence
$\calC \to (\calC/G)\# G$.
\end{thm}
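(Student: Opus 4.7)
The plan is to construct an explicit functor $F\colon \calC \to (\calC/G)\# G$ together with an equivariance adjuster $\ro=(\ro_\mu)_{\mu\in G}$, and then to verify four things: functoriality of $F$, naturality of each $\ro_\mu$, the cocycle identity making $(F,\ro)$ weakly $G$-equivariant, and that $F$ is an equivalence.

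On objects I set $F(x):=x^{(1)}$. Since
\[
((\calC/G)\# G)(x^{(1)},y^{(1)})=(\calC/G)^{1}(x,y)
\]
and, by the grading lemma preceding this theorem, the degree-$1$ piece $(\calC/G)^{1}(x,y)$ equals $P(\calC(x,y))$, it is natural to define $F(f):=P(f)$ for every $f\in\calC(x,y)$. Functoriality of $F$ is inherited from $P$. For the equivariance adjuster, for each $\mu\in G$ and $x\in\calC$ I need an isomorphism
\[
\ro_{\mu,x}\colon A_\mu F(x)=x^{(\mu)}\longrightarrow FA_\mu(x)=(\mu x)^{(1)}
\]
in $(\calC/G)\# G$, and such a morphism lies in $(\calC/G)^{\mu}(x,\mu x)=P^{(1)}_{x,\mu x}(\calC(\mu x,\mu x))$. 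I set $\ro_{\mu,x}$ to be the image of $\id_{\mu x}$ under this bijection, concretely the matrix with $(\al,\be)$-entry $\de_{\be\inv\al,\mu}\id_{\be\mu x}$. Naturality of $\ro_\mu$ in $x$ is a direct matrix computation using the composition rule in $\calC/G$ together with the fact that $A_\be$ acts as the identity on morphisms of $(\calC/G)\# G$.

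The main computation is the cocycle identity $\ro_{\be\al,x}=\ro_{\be,\al x}\circ A_\be(\ro_{\al,x})$ in $(\calC/G)\# G$. Both sides are morphisms $x^{(\be\al)}\to(\be\al x)^{(1)}$, hence elements of $(\calC/G)^{\be\al}(x,\be\al x)$. Expanding the right-hand side by the orbit category composition rule $(gh)_{\de,\ga}=\sum_\theta g_{\de,\theta}\cdot h_{\theta,\ga}$ and inserting the explicit formula for $\ro$ above, the two Kronecker deltas force $\theta=\de\be$ and $\ga=\de\be\al$, so the sum collapses to $\de_{\de\inv\ga,\be\al}\id_{\de\be\al x}$, which is precisely $(\ro_{\be\al,x})_{\de,\ga}$. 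This index-tracking computation is the main (and essentially only) obstacle; it is elementary but requires care.

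Finally, $F$ is fully faithful because the map $F_{x,y}\colon\calC(x,y)\to(\calC/G)^{1}(x,y)$ is the restriction of the $\k$-isomorphism $P^{(1)}_{x,y}$ to its $\al=1$ summand, hence itself a $\k$-isomorphism. For density, every object of $(\calC/G)\# G$ has the form $x^{(\al)}$, and $\ro_{\al,x}^{-1}$ provides an isomorphism $(\al x)^{(1)}=F(\al x)\to x^{(\al)}$. Therefore $(F,\ro)$ is a weakly $G$-equivariant equivalence, as required.
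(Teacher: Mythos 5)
Your proposal is correct and follows essentially the same route as the paper: the paper's functor $\ep_{\calC}$ is exactly your $F$ (with $\ep_{\calC}f=P^{(1)}_{x,y}(f)=P(f)$), and its equivariance adjuster $\ro_{\al,x}:=\ph_{\al,x}=P^{(1)}_{x,\al x}(\id_{\al x})$ coincides with yours, with density likewise obtained from the isomorphisms $x^{(\al)}\iso(\al x)^{(1)}$. The only cosmetic difference is that the paper deduces the cocycle identity from the defining property of the invariance adjuster $\ph$ (using that the $G$-action on morphisms of $(\calC/G)\#G$ is trivial, so $\be\ro_{\al,x}=\ph_{\al,x}$), whereas you re-derive it by the direct matrix computation.
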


\begin{proof}
Let $P\colon \calC \to \calC/G$ be the canonical $G$-covering functor.
We define a functor $\ep_{\calC}\colon \calC \to (\calC/G)\# G$
as follows.

On objects: $\ep_{\calC}x:= x^{(1)}$ for each $x \in \calC$.

On morphisms: $\ep_{\calC}f:= P^{(1)}_{x,y}(f)$ for each $x,y\in \calC$ and each $f\in \calC(x,y)$.
Note that $P^{(1)}_{x,y}\colon \calC(x,y) \to (\calC/G)^{1}(x,y) = ((\calC/G)\# G)(x^{(1)},y^{(1)})$
is an isomorphism.
As easily seen $\ep_{\calC}$ is a functor.
By construction it is obvious that $\ep_{\calC}$ is fully faithful.
We show that $\ep_{\calC}$ is dense.
For this it is enough to show that
$x^{(\al)} \iso (\al x)^{(1)}$ in $(\calC/G)\# G$
for each $x^{(\al)}\in \obj((\calC/G)\# G)$ (with $x \in \obj(\calC/G), \al \in G$)
because $(\al x)^{(1)} = \ep_{\calC}(\al x)$.
Since $((\calC/G)\# G)((\al x)^{(1)}, x^{(\al)}) = (\calC/G)^{\al\inv}(\al x, x) =
P^{(1)}_{\al x, x}\calC(\al\inv \al x,x) \ni P^{(1)}_{\al x,x}(\id_{x}) =\ph_{\al\inv,\al x}$,
there is a morphism $\ph_{\al\inv,\al x} \colon (\al x)^{(1)}\to x^{(\al)}$ in $(\calC/G)\# G$.
Further  since
$((\calC/G)\# G)(x^{(\al)}, (\al x)^{(1)}) = (\calC/G)^{\al}(x, \al x) = P^{(1)}_{x,\al x}\calC(\al x, \al x)
\ni P^{(1)}_{x, \al x}(\id_{\al x}) = \ph_{\al,x}$, we have a morphism
$\ph_{\al, x}\colon x^{(\al)} \to (\al x)^{(1)}$ in $(\calC/G)\# G$.
These morphisms $\ph_{\al,x}$ and $\ph_{\al\inv,\al x}$ are inverse to each other also in $(\calC/G)\# G$
by Remark \ref{superfluous-cond} because as easily seen we have $\id_{x^{(1)}}= \id_{P x}$
and $\id_{(\al x)^{(1)}} = \id_{P \al x}$.
Hence $x^{(\al)} \iso (\al x)^{(1)} = \ep_{\calC}(\al x)$ in $(\calC/G)\# G$.
As a consequence, $\ep_{\calC}$ is an equivalence.

Finally, we make $\ep_{\calC}$ a $G$-equivariant functor.
Define a family $\ro = (\ro_{\al})_{\al\in G}$ of natural transformations
$\ro_{\al} \colon A_{\al} \ep_{\calC} \to \ep_{\calC}A_{\al}$ ($\al \in G$) by
$\ro_{\al,x}:= \ph_{\al, x} \colon \al \ep_{\calC}x = x^{(\al)} \to 
(\al x)^{(1)} = \ep_{\calC}\al x$ for all $x \in \calC$.
Then for each $\al \in G$, $\ro_{\al}$ is a natural isomorphism
because $\ph_{\al}\colon P \to PA_{\al}$ is.
To show that $\ro$ is an equivariance adjuster
it is enough to verify that
$\ro_{\be,\al x}\cdot \be\ro_{\al,x} = \ro_{\be\al, x}$
for each $\al, \be \in G$.
Noting that $\be\ro_{\al, x} = \be\ph_{\al, x} = \ph_{\al,x}$ by the definition of $G$-action on $(\calC/G)\# G$,
we see that this follows from the fact that
$\ph$ is an invariance adjuster.
\end{proof}

\begin{rmk}
(1) In the above theorem, note that the $G$-action
on the right hand side is free, whereas on the left hand side
it is not always free.
Thus by passing from $\calC$ to $(\calC/G)\# G$
we can change any $G$-action to a free $G$-action that is
equivariant to the original one.
See Example \ref{exm:liberalization}
for an example of this ``liberalization''.

(2) Proposition \ref{smash-orb} and Theorem \ref{liberalization}
give a full categorical generalization of Cohen-Montgomery duality
\cite{Co-Mo}.
\end{rmk}

\begin{dfn}
Let $F\colon \calC \to \calB$ be a dense functor.
Take a subclass $\calI$ of $\obj(\calC)$ such that
$F$ is injective on $\calI$ and
$\{Fu\mid u \in \calI \}$ forms a complete set of representatives of
$F(\obj(\calC))/\!\iso$.
Then for each $x\in \calB$ there is a unique $I_x \in \calI$ such that
there is an isomorphism $\nu_x\colon F(I_x) \to x$.
When $x \in F(\calI)$, we have $x = F(I_x)$ and in this case
we take $\nu_x:=\id_x$.
We call the pair $(I,\nu)$ of such families $I:=(I_x)_{x\in \calB}$
and $\nu:=(\nu_x)_{x\in \calB}$ an {\em essential section} of $F$.
\end{dfn}

\begin{lem}
Let $F\colon \calC \to \calB$ be a $G$-covering functor.
Then since $F$ is dense, there is an essential section
$(I,\nu)$ of $F$.
Using this we set
$$
\calB^{\al}(x,y):=
\nu_{y}F_{I_x,I_y}^{(1)}(\calC(\al I_x,I_y))\nu_{x}\inv
$$
for all $\al \in G$ and all $x,y\in \calB$.
Then this makes $\calB$ a $G$-graded category.
This $G$-grading of $\calB$ is called a $G$-{\em grading induced by} $F$
$($with respect to $(I,\nu)$$)$. 
\end{lem}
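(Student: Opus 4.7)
The plan is to verify the two defining properties of a $G$-graded category from the definition at the start of Section~5: (i) the direct sum decomposition $\calB(x,y) = \bigoplus_{\al\in G}\calB^{\al}(x,y)$, and (ii) the inclusion $\calB^{\be}(y,z)\cdot \calB^{\al}(x,y) \subseteq \calB^{\be\al}(x,z)$ under composition.

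For (i), I would invoke that $F$ is a $G$-precovering, so $F^{(1)}_{I_x,I_y}\colon \bigoplus_{\al\in G}\calC(\al I_x, I_y) \to \calB(F I_x, F I_y)$ is a $\k$-module isomorphism. Conjugation $\nu_y(\blank)\nu_x\inv\colon \calB(FI_x, FI_y) \to \calB(x,y)$ is also a $\k$-module isomorphism, so transporting the canonical direct sum decomposition of the source through these two isomorphisms produces the required decomposition $\calB(x,y) = \bigoplus_{\al\in G}\calB^{\al}(x,y)$, where the summand indexed by $\al$ is exactly $\calB^{\al}(x,y)$ as defined.

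For (ii), I will pick $g\in \calB^{\al}(x,y)$ and $f\in \calB^{\be}(y,z)$ and write $g = \nu_y F(v)\ph_{\al, I_x}\nu_x\inv$ and $f = \nu_z F(u)\ph_{\be, I_y}\nu_y\inv$ for some $v\in \calC(\al I_x, I_y)$ and $u\in \calC(\be I_y, I_z)$. The $\nu_y$'s cancel in $fg$, leaving the task of rewriting $F(u)\ph_{\be, I_y}F(v)\ph_{\al, I_x}$. Applying the naturality of $\ph_{\be}\colon F \to FA_{\be}$ to $v$ gives $\ph_{\be, I_y}F(v) = F(\be v)\ph_{\be, \al I_x}$, and condition~(2) of Definition~\ref{admissible} for the invariance adjuster gives the cocycle identity $\ph_{\be, \al I_x}\ph_{\al, I_x} = \ph_{\be\al, I_x}$. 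Combining these yields
$$fg = \nu_z F(u\cdot \be v)\,\ph_{\be\al, I_x}\,\nu_x\inv,$$
which lies in $\calB^{\be\al}(x,z)$ since $u\cdot \be v \in \calC(\be\al I_x, I_z)$.

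I do not anticipate a serious obstacle; the argument is essentially an unfolding of definitions guided by the two structural properties of the invariance adjuster (naturality and the cocycle identity). The only point requiring care is keeping the base points of $\ph_{\be}$ straight when moving it past $F(v)$, and ensuring the right choice of direction in the naturality square so that the base point shifts from $I_y$ to $\al I_x$ and combines correctly with $\ph_{\al, I_x}$.
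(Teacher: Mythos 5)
Your proof is correct and is exactly the ``straightforward'' verification the paper omits: the direct sum decomposition is transported through the isomorphisms $F^{(1)}_{I_x,I_y}$ and conjugation by the $\nu$'s, and the multiplicativity of the grading follows from the naturality of $\ph_{\be}$ together with the cocycle identity $\ph_{\be,\al I_x}\ph_{\al,I_x}=\ph_{\be\al,I_x}$ from Definition~\ref{admissible}(2). The bookkeeping of base points ($\ph_{\be,I_y}F(v)=F(\be v)\ph_{\be,\al I_x}$, and $u\cdot\be v\in\calC(\be\al I_x,I_z)$) is handled correctly.
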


\begin{proof}
Straightforward.
\end{proof}

\begin{cor}
\label{covering-orbit-smash}
Let $F=(F,\ps)\colon \calC \to \calB$ be a $G$-covering functor and
$(I,\nu)$ an essential section of $F$.
Regard $\calB$ as a $G$-graded category by the $G$-grading
induced by $F$ with respect to $(I,\nu)$.
Then
there is a degree-preserving equivalence $I' \colon \calB \to \calC/G$
of $G$-graded categories such that the diagram
$$
\begin{CD}
\calC @>{(F,\ps)}>>\calB\\
@\vert @VV{I'}V \\
\calC @>>{(P,\ph)}> \calC/G
\end{CD}
$$
is commutative up to natural isomorphisms of  $G$-invariant
functors.
\end{cor}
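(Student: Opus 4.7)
My plan is to construct $I'\colon\calB\to\calC/G$ explicitly and then verify each required property step by step. On objects set $I'(x):=I_x$, and on morphisms $f\in\calB(x,y)$ set
$$I'(f):=P^{(1)}_{I_x,I_y}\bigl((F^{(1)}_{I_x,I_y})\inv(\nu_y\inv f\nu_x)\bigr)\in(\calC/G)(I_x,I_y).$$
On each Hom-space $I'$ is the composite of three $\k$-linear isomorphisms (conjugation by $\nu$'s, $(F^{(1)})\inv$, and $P^{(1)}$), so it is automatically bijective on Hom-spaces. Degree-preservation is immediate from the definition of the induced grading: $\calB^\al(x,y)=\nu_y F^{(1)}_{I_x,I_y}(\calC(\al I_x,I_y))\nu_x\inv$ is sent precisely to $P^{(1)}_{I_x,I_y}(\calC(\al I_x,I_y))=(\calC/G)^\al(I_x,I_y)$. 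Preservation of identities follows from formula \eqref{identity} together with $\nu_x\inv\id_x\nu_x=\id_{FI_x}=F(\id_{I_x})$; and functoriality on composition reduces, after expanding $\nu_z\inv(gf)\nu_x=(\nu_z\inv g\nu_y)(\nu_y\inv f\nu_x)$, to the fact that both $F^{(1)}$ and $P^{(1)}$ intertwine the multiplication on $\calC\fstorbit G$ (Proposition \ref{three-orbit-cats}) with the usual composition in $\calB$ and $\calC/G$ respectively.

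To see that $I'$ is dense (and hence an equivalence) I would invoke \thmref{chrtrz-cov}: applied to $(F,\ps)$, it yields an equivalence $H\colon\calC/G\to\calB$ and an isomorphism $\mu\colon(F,\ps)\iso(HP,H\ph)$ of $G$-invariant functors. A short calculation from the naturality of $\mu$ and the compatibility of $\mu$ with $\ps$ and $H\ph$ yields the general identity
$$HP^{(1)}_{u,v}(a)=\mu_v\,F^{(1)}_{u,v}(a)\,\mu_u\inv\qquad\bigl(a\in\bigoplus_{\al\in G}\calC(\al u,v)\bigr).$$
For any $y\in\obj(\calC)=\obj(\calC/G)$, the chain $H(I_{Fy})\xleftarrow{\mu_{I_{Fy}}}F(I_{Fy})\xrightarrow{\nu_{Fy}}F(y)\xrightarrow{\mu_y}H(y)$ is an isomorphism in $\calB$, so fully-faithfulness of $H$ yields $I_{Fy}=I'(Fy)\iso y$ in $\calC/G$, as required.

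It remains to produce an isomorphism $(I'F,I'\ps)\iso(P,\ph)$ of $G$-invariant functors. Define
$$\eta_x:=P^{(1)}_{I_{Fx},x}\bigl((F^{(1)}_{I_{Fx},x})\inv(\nu_{Fx})\bigr)\in(\calC/G)(I_{Fx},x).$$
By the general identity above, $H(\eta_x)=\mu_x\nu_{Fx}\mu_{I_{Fx}}\inv$ is an isomorphism in $\calB$, so $\eta_x$ is an isomorphism in $\calC/G$ by fully-faithfulness of $H$. The main obstacle, and the most delicate part of the argument, is to verify simultaneously (i) the naturality of $\eta=(\eta_x)_{x\in\calC}$, and (ii) the compatibility $\eta_{A_\al x}\cdot(I'\ps_\al)_x=\ph_{\al,x}\cdot\eta_x$ for each $\al\in G$, expressing that $\eta$ is a morphism of $G$-invariant functors. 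Both are diagram chases in Hom-spaces: after applying $H$ and using the identity above to compute $HI'F(f)=\mu_{I_{Fy}}\nu_{Fy}\inv F(f)\nu_{Fx}\mu_{I_{Fx}}\inv$ and $HI'\ps_{\al,x}$ similarly, (i) reduces to naturality of $\mu$ and of $\nu$ in $\calB$, while (ii) reduces to the invariance adjuster axiom for $\ps$ combined with the explicit description of $\ph$ given just before \prpref{pi-universal}.
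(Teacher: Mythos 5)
Your proposal is correct and follows essentially the same route as the paper: both construct $I'$ by transporting morphisms along the $\nu$'s and inverting the covering isomorphisms (your explicit $P^{(1)}_{I_x,I_y}\circ(F^{(1)}_{I_x,I_y})^{-1}$ is exactly the paper's $H_{PI_x,PI_y}^{-1}$ once one takes the equality version $(F,\ps)=(HP,H\ph)$ from Theorem~\ref{chrtrz-cov}(5)), and both obtain the natural isomorphism $(I'F,I'\ps)\to(P,\ph)$ as the preimage of $\nu_{Fx}$ under $H$ (your $\eta_x$ is the paper's $\mu_x$). The only cosmetic difference is that you invoke the isomorphism version (4) of Theorem~\ref{chrtrz-cov} and therefore carry the conjugating natural isomorphism through the computations, which your identity $HP^{(1)}_{u,v}(a)=\mu_v F^{(1)}_{u,v}(a)\mu_u^{-1}$ handles correctly.
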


\begin{proof}
By Theorem \ref{chrtrz-cov} there exists an equivalence
$H\colon \calC/G \to \calB$ such that $(F,\ps) = (HP, H\ph)$.
In particular, we have a commutative diagram:
\begin{equation}\label{comm:F-factors-th-pi}
\begin{CD}
\calC @>{(F,\ps)}>> \calB\\
@\vert @AA{H}A\\
\calC @>>{(P, \ph)}> \calC/G.
\end{CD}
\end{equation}
Now define a functor $I'\colon \calB \to \calC/G$ as follows.

On objects: $I'x:= I_x$ for all $x \in \calB$.

On morphisms:
$I'f:= H_{P I_x, P I_y}\inv (\nu_y\inv f\nu_x)$
for all $f \in \calB(x,y)$ and $x,y\in \calB$, where
$H_{P I_x, P I_y}\colon (\calC/G)(P I_x, P I_y) \to \calB(FI_x, FI_y)$ is an isomorphism that is a restriction of the equivalence
$H\colon \calC/G \to \calB$.

Then as easily seen $I'$ is well-defined as a functor.
By the definitions of the grading of $\calB$ and of $I'$
we have $I'(\calB^{\al}(x,y)) = (\calC/G)^{\al}(I'x, I'y)$
for all $x,y \in \calB$ and $\al \in G$.
Thus $I'$ is a degree-preserving functor.
By construction $I'$ is fully faithful.
Since $H$ is an equivalence,
there exists a unique isomorphism $\mu_x\colon I_{Fx} \to P x$
in $\calC/G$ such that $H(\mu_x) = \nu_{Fx}$ for all $x \in \calC$.
In particular, this shows that $I'$ is dense.
It is easy to see that $\mu:=(\mu_x)_{x\in \calC}$ is
a natural isomorphism $I'F \to P$.
Moreover it follows from the definition of $I'$ and 
the commutativity of \eqref{comm:F-factors-th-pi} that
$\mu\colon (I'F, I'\ps) \to (P,\ph)$ is a morphism
of  $G$-invariant functors.
%
%
%
%
\end{proof}

\section{Relationship between $\Mod \calC$ and $\Mod \calC/G$}

As before let $G$ be a group and $\calC$ a category with a $G$-action.
Let $P=(P, \ph)\colon \calC \to \calC/G$ be
the canonical $G$-covering functor.
In this section we show that the pullup functor $P\up\colon \Mod\calC/G \to \Mod\calC$
induces an equivalence
between $\Mod \calC/G$ and the category $\Mod^{G} \calC$
of ``$G$-invariant modules'' (see below for the definition),
and the pushdown functor $P\down\colon \Mod\calC \to \Mod\calC/G$
induces an equivalence between
$\Mod \calC$ and the category $\Mod_{G} \calC/G$
of $G$-graded modules and degree-preserving morphisms
(see below for the definitions).
Similar results were given in  \cite{C-M} (Theorems 4.3 and 4.5)
under the assumption that the $G$-action is free.
Here we do not assume this condition, and thus our results
give generalizations of these theorems in \cite{C-M}.

\begin{dfn}
\label{G-inv}
We set $\Mod^{G}\calC:= \Inv(\calC\op, \Mod \k)$,
and an object $(M, \ph)$ of which is called a {\em $G$-invariant} $\calC$-module.
There is a forgetful functor $\Mod^{G}\calC \to \Mod\calC$
sending $(M, \ph)$ to $M$, but usually it is
not injective on objects as shown in Example \ref{adjuster-representation}.
Note, therefore, that $\Mod^{G}\calC$ does not need to be a subcategory of $\Mod\calC$.
\end{dfn}

\begin{thm}\label{pullup-iso}
The pullup functor $P\up\colon \Mod \calC/G \to \Mod \calC$
induces an {\em isomorphism} $\Mod \calC/G \to \Mod^{G}\calC$
of categories.
\end{thm}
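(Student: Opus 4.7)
The plan is to derive this theorem as an application of the 2-universality of the canonical functor, Corollary~\ref{2-universality}, applied with target $\Mod\k$. First I would verify that the pullup indeed lands in $\Mod^G\calC$: given $M\in\Mod(\calC/G)$, the natural isomorphisms $M\ph_\al$ yield inverses $(M\ph_\al)\inv\colon MP \to MPA_\al$, which together form an invariance adjuster for $P\up M = MP$, with the cocycle condition inherited from that of $\ph$ via the functoriality of $M$. Thus $P\up M$ is naturally $G$-invariant in the sense of Definition~\ref{G-inv}. On morphisms, for $\la\colon M\to M'$ in $\Mod(\calC/G)$, the naturality of $\la$ at each $\ph_{\al,x}\in \calC/G$ automatically makes $\la P$ commute with the two invariance adjusters so constructed, so this assignment is functorial into $\Mod^G\calC$.

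Next, I would apply Corollary~\ref{2-universality} to the opposite category $\calC\op$, equipped with its induced $G$-action, and to the target $\calC':=\Mod\k$. Using the natural transposition isomorphism $\calC\op/G\iso (\calC/G)\op$, obtained by swapping the two indices in the defining row-and-column-finite matrices (and the identity on objects), I would derive an isomorphism of categories
$$
\Mod(\calC/G)=\Fun((\calC/G)\op,\Mod\k)\iso \Fun(\calC\op/G,\Mod\k)\iso \Inv(\calC\op,\Mod\k),
$$
whose right-hand side, by Definition~\ref{G-inv}, is exactly $\Mod^G\calC$: its objects are $G$-invariant contravariant functors $\calC\to\Mod\k$, and its morphisms are natural transformations compatible with the invariance adjusters. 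Tracing through the explicit construction in the proof of Proposition~\ref{pi-universal}, this isomorphism sends $M$ to $(MP, M\ph)$ on objects and $\la$ to $\la P$ on morphisms, which is precisely the pullup $P\up$; faithfulness on morphisms is also clear since $P$ is the identity on objects, so $(\la P)_x=\la_x$ determines $\la$.

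The hard part will be the variance bookkeeping: one must verify that the transposition $\calC\op/G\iso(\calC/G)\op$ intertwines the canonical functors attached to $\calC\op$ and $\calC$ under this identification, so that applying 2-universality over $\calC\op$ genuinely encodes pullup along $P\colon \calC\to\calC/G$, and that the resulting invariance adjusters on both sides correspond correctly (on the $\calC\op$-side they arise from $\ph\op$, on the $\calC$-side from the inverses $(M\ph_\al)\inv$). Once this is carefully checked, the theorem reduces to a formal consequence of Corollary~\ref{2-universality}.
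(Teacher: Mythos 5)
Your proposal is correct and is essentially the paper's own proof: the paper disposes of this theorem in one sentence by ``applying the contravariant version of Corollary~\ref{2-universality} to $\calC':=\Mod\k$,'' which is exactly your strategy, with your additional paragraphs merely spelling out the variance bookkeeping and the identification of the resulting isomorphism with $P\up$ that the paper leaves implicit.
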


\begin{proof}
This follows by applying the contravariant version of
Corollary \ref{2-universality} to $\calC':= \Mod \k$
.
\end{proof}

\begin{rmk}
In the theorem above assume that the $G$-action on $\calC$ is free.
Then the same argument shows that
the classical canonical functor
$(P, \id)\colon \calC \to \calC\oorbit G$
induces an isomorphism $\Mod \calC\oorbit G \to \Mod^G\calC$,
which is exactly the content of Theorem 4.3 in \cite{C-M}.
In this case $P\up(M) = (M, \id)$ for all $M \in \Mod\calC/G$, and
$\Mod^G\calC$ is identified with the (right module version of)
category $(\calC\text{-}\Mod)^G$
defined in \cite{C-M}.
In particular, $\Mod^G\calC$ can be regarded as a full subcategory of $\Mod\calC$. 
\end{rmk}

In particular, if $\calC =R$ is an algebra we obtain the following.
\begin{cor}
Let $R$ be an algebra with a $G$-action.
Then we have an isomorphism $\Mod R*G \cong \Mod^{G}R$.\hfil \qed
\end{cor}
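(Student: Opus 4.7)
The plan is an immediate reduction to \thmref{pullup-iso}: apply that theorem with $\calC := R$ viewed as a $\k$-linear category with a single object, and then identify $R/G$ with the skew group algebra $R*G$ via Example \ref{sk-gp-alg}.

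More precisely, I would first regard the algebra $R$ with its $G$-action as a one-object $\k$-linear category on which $G$ acts by automorphisms, so that the module category $\Mod R$ in the statement is literally the module category of this one-object category. Example \ref{sk-gp-alg} supplies an explicit isomorphism of $\k$-linear categories $R/G \iso R*G$, where on the right $R*G$ denotes the skew group algebra (again viewed as a one-object category). Precomposition with this isomorphism induces an isomorphism of module categories $\Mod R*G \iso \Mod R/G$.

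Then I would invoke \thmref{pullup-iso} specialized to $\calC = R$ to obtain an isomorphism of categories $P\up \colon \Mod R/G \iso \Mod^{G}R$, and compose the two isomorphisms to conclude $\Mod R*G \iso \Mod^{G}R$. There is no genuine obstacle here: the corollary is a direct one-object specialization of \thmref{pullup-iso}, and the only bookkeeping is the identification $R/G \iso R*G$ that is already spelled out in Example \ref{sk-gp-alg}.
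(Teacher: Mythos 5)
Your proposal is correct and matches the paper's intent exactly: the corollary is stated with no proof precisely because it is the one-object specialization of Theorem \ref{pullup-iso}, combined with the identification $R/G \iso R*G$ from Example \ref{sk-gp-alg} (note that $R*G$ here must be read as the classical skew group algebra of that example, not as $\bas(\sic(R/G))$, since the latter would only give an equivalence rather than an isomorphism). Nothing further is needed.
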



\begin{dfn}
\label{graded-mod-hmg}
Let $\calB$ be a $G$-graded category.
A $G$-{\em graded} $\calB$-module is a pair $(M, d)$ of
a $\calB$-module $M$ and
a family $d$ of direct sum decompositions
$M(x) = \Ds_{\al \in G} M^{\al}(x)\ (x \in \calB)$ such
that $M(f)(M^{\al}(x)) \subseteq M^{\al\be}(y)$
for all $f\in \calB^{\be}(y,x)$, $x,y\in \calB$ and $\be \in G$.
Let $M, N$ be $G$-graded $\calB$-modules and $u\colon M \to N$
a morphism between them as $\calB$-modules.
Then $u$ is called {\em degree-preserving} if
$u_{x}M^{\al}(x) \subseteq N^{\al}(x)$ for all $x\in \calB$ and $\al\in G$.
The category of $G$-graded modules and
degree-preserving morphisms between them is denoted by $\Mod_{G}\calB$.
Again there is a a forgetful functor $\Fgt\colon \Mod_{G}\calB \to \Mod\calB$
sending $(M,d)$ to $M$, but
usually it is not injective on objects, and $\Mod_{G}\calB$ does not
need to be a subcategory of $\Mod\calB$.

We denote by $\mod_G\calB$ the full subcategory of $\Mod_G\calB$
consisting of those $(M, d) \in \Mod_G\calB$ with $M \in \mod \calB$.
\end{dfn}

\begin{thm}\label{remov-grading}
The pushdown functor $P\down\colon \Mod \calC \to \Mod \calC/G$
induces an equivalence $\Mod \calC \to \Mod_{G}\calC/G$.
Namely, $P\down$ factors through the forgetful functor
to have a commutative diagram
$$
\xymatrix{
\Mod \calC && \Mod \calC/G\\
  & \Mod_{G}\calC/G,
\ar^{P\down}"1,1";"1,3"
\ar_{P'\down}"1,1"; "2,2"
\ar_{\Fgt}"2,2";"1,3"
}
$$
with $P'\down$ an equivalence.
\end{thm}

\begin{proof}
First we show that $P\down$ sends each $u\colon X \to X'$ in $\Mod \calC$
into $\Mod_{G} \calC/G$.
For each $x \in \calC$  we have
$(P\down X)(P x) = \Ds_{\al\in G} X(\al x)
$ by \eqref{pi-down-obj}.
Using this we set
\begin{equation}\label{grading-pi-down}
(P\down X)^{\al}(P x):= X(\al x).
\end{equation}
Then this makes $P\down X$ a $G$-graded $\calC/G$-module.
Indeed, for each
$f=(\de_{\mu\inv\la,\be}f_{\mu,\la})_{(\la,\mu)}
\in (\calC/G)^{\be}(P y, P x)$
with $x, y \in \calC$ and $\be \in G$
and for each
$a = (\de_{\mu,\al}a_{\mu})_{\mu} \in (P\down X)^{\al}(P x)$
with $\al\in G$,
we have
\begin{align*}
(P\down X)(f)(a)
&=(X(\de_{\mu\inv\la,\be}f_{\mu,\la}))_{(\mu,\la)}
(\de_{\mu,\al}a_{\mu})_{\mu}
= \left(\sum_{\mu\in G}\de_{\mu\inv\la,\be}X(f_{\mu,\la})(\de_{\mu,\al}a_{\mu})\right)_{\la}\\
&=(\de_{\la,\al\be}X(f_{\al,\la})(a_{\al}))_{\la}\in (P\down X)^{\al\be}(P y).
\end{align*}
Hence $P\down X \in \Mod_{G}\calC/G$.
Further the diagram \eqref{pi-down-mor}
shows that
$P\down u$ is degree-preserving, and $P\down u\colon P\down X \to P\down Y$ is in $\Mod_{G}\calC/G$, as desired.
Accordingly, the pushdown functor $P\down$ induces a functor
$P'\down\colon \Mod\calC \to \Mod_{G}\calC/G$, which we denote also by $P\down$.

We next show that this functor
is fully faithful.
The faithfulness is obvious by \eqref{pi-down-mor}.
To show that this functor is full, let
$X, Y \in \calC$ and $g \in \Mod_{G}\calC/G(P\down X, P\down Y)$.
Then for each $P x \in \calC/G$
we have
$g_{P x}\colon \Ds_{\al\in G}X(\al x) \to \Ds_{\al \in G}Y(\al x)$,
and
$g_{P x} = \Ds_{\al\in G} g_{\al, x}$ for some
$g_{\al, x} \colon X(\al x) \to Y(\al x)$ in $\Mod \k$
that is uniquely determined by $g_{P x}$ for each $\al\in G$.
Define a morphism $f\colon X \to Y$ in $\Mod \calC$ by
$f_x:= g_{1,x}\colon X(x) \to Y(x)$ for each $x \in \calC$.
\begin{clm}
$f$ is a morphism in $\Mod \calC$.
\end{clm}
Indeed, it is enough to show the commutativity of the diagram
$$
\begin{CD}
X(x) @>f_x>> Y(x)\\
@VX(h)VV @VVY(h)V\\
X(y) @>>f_y> Y(y)
\end{CD}
$$
for all $h\colon x \to y$ in $\calC$.
Noting that $(P\down X)(P h)=(X(\de_{\be,\al}\al h))_{(\al,\be)}=\Ds_{\al\in G}X(\al h)$,
this follows from the following commutative diagram
expressing that $g$ is in $\Mod_G \calC/G$:
$$
\begin{CD}
\Ds_{\al\in G}X(\al x) @>{\Ds_{\al\in G}g_{\al,x}}>> \Ds_{\al\in G}Y(\al x)\\
@V{\Ds_{\al\in G}X(\al h)}VV @VV{\Ds_{\al\in G}Y(\al h)}V\\
\Ds_{\al\in G}X(\al y) @>>{\Ds_{\al\in G}g_{\al,y}}> \Ds_{\al\in G}Y(\al y).
\end{CD}
$$
\begin{clm}
$P\down f = g$.
\end{clm}
Indeed, this is equivalent to saying that
$(P\down f)_{P x} = g_{P x}$
for all $P x \in \calC/G$, i.e., that
$\Ds_{\al \in G}f_{\al x} = \Ds_{\al\in G}g_{\al,x}$.
Hence it is enough to show the following for each $x\in \calC$ and $\al\in G$:
\begin{equation}
\label{g-one-al-x}
g_{1,\al x} = g_{\al,x}.
\end{equation}
Now since the isomorphism
$\ph_{\al,x}\colon P x \to P\al x$ in $\calC/G$ has the form
$\ph_{\al,x}=(\de_{\la,\mu\al}\id_{\la x})_{(\la,\mu)}$,
we have a commutative diagram
$$
\begin{CD}
P\down X(P x) @>{P\down X(\ph_{\al,x})}>> P\down X(P\al x)\\
\Vert &&\Vert\\
\Ds_{\la\in G}X(\la x) @>>{(X(\de_{\la,\mu\al}\id_{\la x}))_{\la,\mu}}> \Ds_{\mu\in G}X(\mu\al x).
\end{CD}
$$
Therefore $g\in \Mod \calC/G(P\down X, P\down Y)$ yields
a commutative diagram
$$
\begin{CD}
\Ds_{\la\in G} X(\la x) @>{\Ds_{\la\in G}g_{\la,x}}>> \Ds_{\la\in G}Y(\la x)\\
@V{(X(\de_{\la,\mu\al}\id_{\la x}))_{\la,\mu}}VV @VV{(Y(\de_{\la,\mu\al}\id_{\la x}))_{\la,\mu}}V\\
\Ds_{\mu\in G} X(\mu\al x) @>{\Ds_{\mu\in G}g_{\mu,\al x}}>> \Ds_{\mu\in G}Y(\mu\al x).
\end{CD}
$$
By a direct calculation this gives us the equality
$$
\de_{\la,\nu\al}g_{\la,x} = \de_{\la,\nu\al}g_{\nu,\al x}
$$
for all $\la,\nu\in G$.
In particular, for $\nu=1$ and $\la=\al$ we obtain the desired equation
\eqref{g-one-al-x}.
By these claims we see that the functor
$P\down\colon \Mod\calC \to \Mod_{G} \calC/G$ is full.

Finally, we show that this functor is dense.
Let $N=\Ds_{\al\in G}N^{\al}$ be in $\Mod_{G}\calC/G$.
We define an $M \in \Mod\calC$ as follows.

On objects: $M(x):= N^{1}(P x)= N^{1}(x)$ for all $x\in \calC$.

On morphisms: $M(f):= N(P f)|_{N^{1}(x)}\colon N^{1}(x) \to N^{1}(y)$
for all $f\colon x \to y$ in $\calC$.
\begin{clm}
$M$ is well-defined,
i.e., $N(P f)(N^1(x)) \subseteq N^1(y)$ for all $f\colon x \to y$ in $\calC$.
\end{clm}
Indeed, it is enough to show that
$\deg P f =1$.
But this follows from
$P f = (\de_{\be,\al}\al f)_{(\al,\be)}
=(\de_{\be\inv\al,1}\al f)_{(\al,\be)}$ by Remark \ref{degree}.

Next we show the following, which finishes the proof:
\begin{clm}
$P\down M \iso N$ in $\Mod_{G}\calC/G$.
\end{clm}
First note that for each $\al\in G$ and $x \in \calC$ we have
$$\deg \ph_{\al,x} =\al$$
by Remark \ref{degree} because
$
\ph_{\al,x}
= (\de_{\la,\mu\al}\id_{\la x})_{(\la,\mu)}
= (\de_{\mu\inv\la,\al}\id_{\la x})_{(\la,\mu)}
$. Then also $\deg \ph_{\al\inv,\al x} = \al\inv$ and hence
the mutually inverse isomorphisms $\ph_{\al,x}$ and $\ph_{\al\inv,\al x}$
induce the isomorphism
$$F_{\al,P x}:=
N(\ph_{\al,x})|_{N^1(P\al x)}\colon M(\al x)=N^1(P\al x) \to N^{\al}(P x)
$$
with the inverse $N(\ph_{\al\inv\al x})|_{N^{\al}(P x)}$ in $\Mod \k$.
Using this define an isomorphism $F_{P x}$ in $\Mod \k$
for each $x \in \calC$ by 
the commutative diagram
$$
\begin{CD}
(P\down M)(P x) @>{F_{P x}}>> N(P x)\\
\Vert && \Vert\\
\Ds_{\al\in G} M(\al x)
@>>{\Ds_{\al\in G}F_{\al,P x}}>
\Ds_{\al\in G} N^{\al}(P x).
\end{CD}
$$
To show this claim it is enough to show that
$F:=(F_{P x})_{P x \in \calC_G}\in \Mod_{G}\calC/G(P\down M, N)$.
To this end it is enough to show
that $F$ is in $\Mod\calC/G$, or equivalently to show the commutativity
of the right square of the diagram
$$
\begin{CD}
\Ds_{\al\in G} M(\al x)@=(P\down M)(P x) @>{F_{P x}}>> N(P x)\\
@V{(M(f_{\al,\be}))_{(\al,\be)}}VV @V{(P\down M)(f)}VV @VV{N(f)}V\\
\Ds_{\al\in G} M(\al y)@=(P\down M)(P y) @>>{F_{P y}}> N(P y)
\end{CD}
$$
for all $f\in \calC/G(P y, P x)$ and $x, y \in \calC$.
Now there exists a unique
$(f_{\be})_{\be\in G}\in \Ds_{\be\in G}\calC(\be y, x)$
such that
$f= P^{(1)}_{y,x}((f_{\be})_{\be\in G})
= \sum_{\be\in G}P^{(1)}_{y,x}(f_{\be})$.
Since it is enough to verify this commutativity for each
term $P^{(1)}_{y,x}(f_{\be})$ of $f$, we may assume that
$\deg f = \be$ for some $\be\in G$.
Then to show this commutativity
it suffices to show that the right square of the following
diagram commutes for each $\al \in G$:
$$
\begin{CD}
N^1(P\al x)@=(P\down M)^{\al}(P x) @>{F_{\al,P x}}>> N^{\al}(P x)\\
@V{N^1(P(f_{\al,\al\be}))}VV @V{M(f_{\al,\al\be})}VV @VV{N(f)}V\\
N^1(P\al\be y)@=(P\down M)^{\al\be}(P y) @>>{F_{\al\be,P y}}> N^{\al\be}(P y)
\end{CD}
$$
or equivalently that
$$N(\ph_{\al\be,y})N(P(f_{\al,\al\be})) = N(f)N(\ph_{\al,x}).$$
This holds if the equation
$$
P(f_{\al,\al\be})\ph_{\al\be,y} = \ph_{\al,x}f
$$
holds.
Now since $\deg f = \be$, $f$ has the form
$f=(\de_{\mu\inv\la,\be}f_{\mu,\la})_{(\la,\mu)}$.
Using this a direct calculation shows that both hand sides
of this equation are equal to
$(\de_{\la,\nu\al\be}f_{\nu\al,\la})_{\la,\nu}$.
\end{proof}

\begin{rmk}
Ignoring the relationship with pushdown functors,
we have an alternative proof of the theorem above
by Theorem \ref{liberalization} and \cite[Theorem 4.5]{C-M} as follows:
$\Mod \calC \simeq \Mod (\calC/G)\# G \simeq \Mod_{G}(\calC/G)$,
and in fact, it is possible to prove the theorem by showing that the composite
of these equivalences is equal to $P'\down$ (using an equivalence
$\Mod \calC \to \Mod (\calC/G)\# G$ induced by the equivalence
$\ep'_{\calC}\colon (\calC/G)\# G \to \calC$ that is a quasi-inverse of
$\ep_{\calC}$ and is defined in \cite[Definition 7.3]{Asa09}).
We kept the proof above because
it gives the explicit form of a quasi-inverse of $P'\down$
(see constructions of $f$ and $M$ in the proof)
.
\end{rmk}

\begin{cor}
Let $R$ be an algebra with a $G$-action.
Then we have an equivalence $\Mod R \simeq \Mod_{G}R*G$.\hfil \qed
\end{cor}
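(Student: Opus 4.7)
The plan is to derive this corollary as a direct specialization of Theorem \ref{remov-grading} to the one-object situation. First, I view the algebra $R$ as a $\k$-linear category $\calC$ with a single object, equipped with the $G$-action inherited from its given $G$-action as an algebra (this is exactly the setup of Example \ref{sk-gp-alg}). With this identification, Theorem \ref{remov-grading} immediately gives an equivalence
$$
\Mod R \simeq \Mod_{G}(R/G),
$$
realized concretely via the pushdown functor along the canonical $G$-covering $P\colon R \to R/G$.

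Next, I invoke Example \ref{sk-gp-alg}, which identifies $R/G$ with the skew group algebra $R*G$ (regarded again as a one-object category), the isomorphism being $(f_{\al})_{\al}\mapsto \sum_{\al}f_{\al}*\al$. This isomorphism sends the canonical $G$-grading on $R/G$ (coming from the decomposition $\bigoplus_{\al\in G}R(\al x,y)$ used to define $R/G$) onto the evident $G$-grading on the skew group algebra $R*G$ given by its decomposition $\bigoplus_{\al\in G} R*\al$, so it is an isomorphism of $G$-graded categories. Consequently
$$
\Mod_{G}(R/G) \iso \Mod_{G}(R*G).
$$

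Composing these two results yields the desired equivalence $\Mod R \simeq \Mod_{G}(R*G)$. There is essentially no obstacle here; the only subtle point is to verify that the identification $R/G \iso R*G$ from Example \ref{sk-gp-alg} respects the $G$-gradings (so that the category $\Mod_{G}$ on either side is the same), but this is immediate from unpacking the definitions of the two gradings. As indicated in the remark following the proof of Theorem \ref{remov-grading}, one may alternatively route the argument through Theorem \ref{liberalization} together with \cite[Theorem 4.5]{C-M}, but the direct specialization above is the most economical.
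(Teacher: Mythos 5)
Your proposal is correct and is exactly the argument the paper intends: the corollary carries no separate proof precisely because it is the one-object specialization of Theorem \ref{remov-grading} combined with the identification $R/G \cong R*G$ of Example \ref{sk-gp-alg}. The only point requiring any care is that this identification carries the $G$-grading of $R/G$ (defined via $P^{(1)}$) to the standard grading $\bigoplus_{\al\in G}R*\al$ of the skew group algebra, and you address that correctly.
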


\begin{cor}
The pushdown functor $P\down$ induces a $G$-covering functor
$$
\mod \calC \to \mod_{(G)}(\calC/G)
,$$
where $\mod_{(G)}(\calC/G)$ is the full subcategory of $\mod (\calC/G)$
consisting of $\Fgt(M,d)$ with $(M,d) \in \mod_G(\calC/G)$, namely
of $G$-gradable $\calC/G$-modules.
Therefore, in particular, we have
$$
(\mod \calC)/G \simeq \mod_{(G)}(\calC/G)
\quad\text{ and}\quad
\mod \calC \simeq (\mod_{(G)} (\calC/G))\# G%
.
$$
\end{cor}

\begin{proof}
This follows by Theorems \ref{chrtrz-cov}, 
\ref{pushdown-precov}, and \ref{remov-grading},
and by Corollary \ref{covering-orbit-smash}.
\end{proof}

\begin{rmk}
In the above a similar statement holds for $\Kb(\Prj\calC)$.
\end{rmk}

\section{Colimit orbit categories}
\label{section:colim-orb-cat}

In this section we investigate the orbit category of a category $\C$
by a cyclic group $G$ generated by an auto-equivalence of $\C$
modulo natural isomorphisms.
Throughout this section let $S\colon \calC \to \calC$
be an auto-equivalence of $\calC$.
The point to define the orbit category $\calC/\ang{\bar{S}}$
is in replacing $S$ by an {\em automorphism} $S'$
of some category $\calC'$
with an equivalence $H\colon \calC \to \calC'$
having the property that the diagram
$$
\begin{CD}
\calC @>{S}>> \calC\\
@V{H}VV @VV{H}V\\
\calC' @>>{S'}> \calC'
\end{CD}
$$
commutes up to natural isomorphisms, and then we can define
$\calC/\ang{\bar{S}}$ by setting $\calC/\ang{\bar{S}}:= \calC'/\ang{S'}$.
In Remark \ref{rmk:autoequivalence} the category $\calC'$ was taken as
a basic subcategory of $\calC$ with $H$ a quasi-inverse of
the inclusion functor $\calC' \to \calC$.
There is an alternative choice for $\calC'$ as used in the paper \cite{K-V}
by Keller and Vossieck.
We realized that their choice of $\calC'$ has the form $\calC_{/S} \# \bbZ$
for some $\bbZ$-graded category $\calC_{/S}$, which we call
the {\em colimit orbit category} of $\calC$ by $S$.
As a consequence, we have $\calC/\ang{\bar{S}}:=\calC'/\ang{S'}
\simeq (\calC_{/S}\#\bbZ)/\bbZ \simeq \calC_{/S}$. Thus
the orbit category $\calC/\ang{\bar{S}}$ is justified as the
colimit orbit category.

\begin{dfn}
(1) We define a $\bbZ$-graded category $\calC_{/S}$ called
the {\em colimit orbit category} of $\calC$ by $S$ as follows.
\begin{itemize}
\item
$\obj(\calC_{/S}):= \obj(\calC)$;
\item
For each $X, Y \in \obj(\calC_{/S})$,
$\calC_{/S}(X, Y):= \Ds_{r\in \bbZ}\calC_{/S}^r(X, Y)$, where
$$\calC_{/S}^r(X, Y):=\colim_{m\ge r}\calC(S^{m-r}X, S^{m}Y);$$
\item
For each composable morphisms $X\ya{f} Y \ya{g} Z$ in $\calC_{/S}$,
say $f=(f_a)_{a\in \bbZ}$ and $g=(g_b)_{b\in \bbZ}$,
$$gf:= (\sum_{c=a+b}g_{b}f_{a})_{c\in \bbZ}.$$
\end{itemize}
(2) We define a functor
$S'\colon \calC_{/S}\#\bbZ \to \calC_{/S}\#\bbZ$ as follows.
\begin{itemize}
\item
For each $X^{(i)}\in \obj(\calC_{/S}\#\bbZ)$,
$S'X^{(i)}:= X^{(i-1)}$;
\item
For each $X^{(i)}, Y^{(j)} \in \obj(\calC_{/S}\#\bbZ)$, define
$$
S'\colon (\calC_{/S}\#\bbZ)(X^{(i)}, Y^{(j)}) \to
(\calC_{/S}\#\bbZ)(X^{(i-1)}, Y^{(j-1)})
$$
as the identity map of
$\calC_{/S}^{i-j}(X,Y) =(\calC_{/S}\#\bbZ)(X^{(i)}, Y^{(j)})
=(\calC_{/S}\#\bbZ)(X^{(i-1)}, Y^{(j-1)})$.
\end{itemize}
(3) We define a functor $H \colon \calC \to \calC_{/S}\# \bbZ$ as follows.
\begin{itemize}
\item
For each $X \in \calC$, $HX:=X^{(0)}$.
\item
For each $X, Y \in \calC$, define
$$H\colon \calC(X, Y) \to (\calC_{/S}\# \bbZ)(X^{(0)}, Y^{(0)})
=\calC_{/S}^0(X,Y)=\colim_{m\ge 0}\calC(S^m X, S^m Y)$$
by $Hf:=[f]$, the image of $f$ in $\colim_{m\ge 0}\calC(S^m X, S^m Y)$
for each $f \in \calC(X, Y)$.
\end{itemize}
\end{dfn}

\setcounter{clm}{0}
\begin{prp}
$(1)$ $S'$ is an automorphism of the category $\calC_{/S} \# \bbZ$;

$(2)$ $H$ is an equivalence; and

$(3)$ We have a commutative diagram
$$
\begin{CD}
\calC @>{S}>> \calC\\
@V{H}VV @VV{H}V\\
\calC_{/S}\# \bbZ @>>{S'}> \calC_{/S}\# \bbZ.
\end{CD}
$$
up to natural isomorphisms.
\end{prp}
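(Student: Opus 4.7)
My plan is to treat the three parts separately, with Part $(2)$ being the heart of the argument. The key underlying observation I would establish first is that since $S$ is an auto-equivalence, hence fully faithful, every transition map $S\colon \calC(S^{m-r}X, S^m Y) \to \calC(S^{m-r+1}X, S^{m+1}Y)$ in the filtered colimit defining $\calC_{/S}^r(X,Y)$ is a bijection. Consequently the canonical map from any single stage of the colimit into the colimit itself is a bijection; in particular there is a natural identification $\calC_{/S}^0(X,Y)\iso \calC(X,Y)$, via the $m=0$ representative, which is compatible with composition.

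For Part $(1)$, I would just observe that by definition of the smash product
$(\calC_{/S}\#\bbZ)(X^{(i)}, Y^{(j)}) = \calC_{/S}^{i-j}(X,Y) = (\calC_{/S}\#\bbZ)(X^{(i-1)}, Y^{(j-1)})$,
so $S'$ is literally the identity on each hom-space once we relabel source and target. Functoriality and $\k$-linearity are then immediate, and $S'$ has a strict two-sided inverse given by the analogous shift $X^{(i)}\mapsto X^{(i+1)}$, so $S'$ is an automorphism.

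For Part $(2)$, fully-faithfulness of $H$ is immediate from the identification $\calC_{/S}^0(X,Y)\iso \calC(X,Y)$ above; functoriality of $H$ is a direct check using the same identification and the composition rule in $\calC_{/S}$. The delicate point is density. I would first exhibit, for each $X\in \calC$, mutually inverse morphisms $\al_X\colon (SX)^{(0)} \to X^{(-1)}$ represented at level $m=1$ by $\id_{SX}\in \calC(SX,SX)$ and $\be_X\colon X^{(-1)}\to (SX)^{(0)}$ represented at level $m=-1$ by $\id_X\in \calC(X,X)$; the fact that $\al_X\be_X$ and $\be_X\al_X$ are identities follows because both compositions lie in a hom-space of the form $\calC_{/S}^0(-,-)$, which the key observation identifies with an ordinary $\calC$-hom-space in which they are visibly identities. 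Iterating yields $(S^n X)^{(0)} \iso X^{(-n)}$ for every $n\ge 0$, and dually $Y^{(0)}\iso (S^n Y)^{(n)}$ for every $n\ge 0$. Given an arbitrary object $X^{(i)}$, if $i\le 0$ I would take $Y:=S^{-i}X$, giving $HY = Y^{(0)}\iso X^{(i)}$; if $i>0$, I would invoke density of $S^i$ (which holds because $S$ is an equivalence) to find $Y$ with $S^iY\iso X$, whence $HY = Y^{(0)}\iso (S^i Y)^{(i)}\iso X^{(i)}$.

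For Part $(3)$, the family $\tau:=(\al_X)_{X\in \calC}$ constructed above already lives in $(\calC_{/S}\#\bbZ)(HSX, S'HX)$, since $HSX = (SX)^{(0)}$ and $S'HX = X^{(-1)}$, and each $\al_X$ is an isomorphism by the previous paragraph. Naturality of $\tau$ in $X$ reduces, via the identification $\calC_{/S}^1(SX,Y)\iso \calC(SX,SY)$, to the trivial identity $Sf\circ \id_{SX} = \id_{SY}\circ Sf$ for each $f\colon X\to Y$ in $\calC$. The main obstacle I anticipate is purely the bookkeeping involved in the composition formula in $\calC_{/S}$ together with the transition maps in the colimits; once one verifies that the identifications $\calC_{/S}^r(X,Y)\iso \calC(S^{\max(-r,0)}X, S^{\max(r,0)}Y)$ are compatible with composition and with the $S'$-action on hom-spaces, every step of the proof above becomes essentially tautological.
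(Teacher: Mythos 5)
Your proposal is correct and follows essentially the same route as the paper: the explicit inverse shift for $S'$, fully faithfulness of $H$ from that of $S$ (all transition maps in the colimit being bijections), density via the isomorphisms $X^{(-i)}\iso (S^iX)^{(0)}$ and $(S^iX)^{(i)}\iso X^{(0)}$ represented by identity morphisms combined with density of $S^i$, and the same identity-represented elements assembling into the natural isomorphism for part $(3)$. The only cosmetic differences are that you build the general isomorphisms by iterating the $i=1$ case and supply the explicit two-sided inverses (where the paper simply asserts $[\id_X]$ and $[\id_{S^iX}]$ are isomorphisms), and your natural transformation runs $HS\to S'H$ rather than the paper's $S'H\to HS$.
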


\begin{proof}
(1) Define a functor
$F\colon \calC_{/S}\#\bbZ \to \calC_{/S}\#\bbZ$ as follows.
\begin{itemize}
\item
For each $X^{(i)}\in \obj(\calC_{/S}\#\bbZ)$,
$FX^{(i)}:= X^{(i+1)}$;
\item
For each $X^{(i)}, Y^{(j)} \in \obj(\calC_{/S}\#\bbZ)$, define
$$
F\colon (\calC_{/S}\#\bbZ)(X^{(i)}, Y^{(j)}) \to
(\calC_{/S}\#\bbZ)(X^{(i+1)}, Y^{(j+1)})
$$
as the identity map of
$\calC_{/S}^{i-j}(X,Y) =(\calC_{/S}\#\bbZ)(X^{(i)}, Y^{(j)})
=(\calC_{/S}\#\bbZ)(X^{(i+1)}, Y^{(j+1)})$.
\end{itemize}
Then it is obvious that $F$ is the inverse of $S'$, and hence
$S'$ is an automorphism of $\calC_{/S}\# \bbZ$.

(2) It is obvious that $H$ is fully faithful because so is $S$.
\begin{clm}
\label{nonpositive}
For each $X^{(-i)} \in \calC_{/S}\# \bbZ$ with $i \ge 0$,
we have $X^{(-i)} \iso (S^{i} X)^{(0)}$.
\end{clm}

Indeed, $[\id_{X}]\in \colim_{m\ge -i}\calC(S^{m+i}X, S^{m+i} X)
= (\calC_{/S}\# \bbZ)(X^{(-i)}, (S^{i} X)^{(0)})$
is an isomorphism in $\calC_{/S}\# \bbZ$.

\begin{clm}
\label{positive}
For each $X \in \calC_{/S}$ and each $i \in \bbZ$ with $i >0$,
we have $(S^i X)^{(i)} \iso X^{(0)}$.
\end{clm}

Indeed, $[\id_{S^i X}] \in \colim_{m\ge i}\calC(S^m X, S^m X) =
(\calC_{/S}\# \bbZ)((S^i X)^{(i)}, X^{(0)})$
is an isomorphism in $\calC_{/S}\# \bbZ$.

Using these we show that $H$ is dense.
Let $X^{(i)} \in \calC_{/S}\#\bbZ$.
If $i \le 0$, then $X^{(i)} \iso H(S^{-i}X)$ by Claim \ref{nonpositive}.
If $i > 0$, then there is some $Y \in \calC$ such that
$X \iso S^i Y$ in $\calC$ because $S$ is dense; and
we have $X^{(i)} \iso (S^i Y)^{(i)} \iso H(Y)$ by Claim \ref{positive}.
Hence $H$ is dense, and is an equivalence.

(3) By Claim \ref{nonpositive}, we have an isomorphism
$$[\id_{X}] \colon S'H(X) = X^{(-1)} \to (S X)^{(0)} = HS(X).$$
Then it is easy to see that $([\id_{X}])_{X\in \calC}\colon S'H \to HS$
is a natural isomorphism.
\end{proof}

By this statement we can define the ``orbit category''
$\calC/\ang{\bar{S}}$
as follows.

\begin{dfn}
$\calC/\ang{\bar{S}}:= (\calC_{/S}\# \bbZ)/\ang{S'}$.
\end{dfn}

Since the $\bbZ$-action on $\calC_{/S}\# \bbZ$ defined by $n \mapsto {S'}^{-n}$ ($n \in \bbZ$)
coincides with the canonical $\bbZ$-action on it,
we obtain the following by Proposition \ref{smash-orb}.

\begin{thm}
$\calC/\ang{\bar{S}} \simeq \calC_{/S}$.
\end{thm}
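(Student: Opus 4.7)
The plan is to reduce the assertion directly to Proposition~\ref{smash-orb}, applied to the $\bbZ$-graded category $\calC_{/S}$, once it is checked that the two $\bbZ$-actions in sight on $\calC_{/S}\#\bbZ$ genuinely agree.

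First, I would unwind the definition $\calC/\ang{\bar{S}} := (\calC_{/S}\#\bbZ)/\ang{S'}$. The group $\ang{S'}$ is an infinite cyclic subgroup of $\Aut(\calC_{/S}\#\bbZ)$ (by part (1) of the preceding proposition), and hence it is canonically isomorphic to $\bbZ$ via $n\mapsto {S'}^{-n}$. I would use this isomorphism to describe the induced $\bbZ$-action on $\calC_{/S}\#\bbZ$ explicitly: on objects, $n\cdot X^{(i)} = {S'}^{-n}(X^{(i)}) = X^{(i+n)}$, and on morphisms $n$ acts as the identity under the canonical identification $(\calC_{/S}\#\bbZ)(X^{(i)},Y^{(j)}) = \calC_{/S}^{i-j}(X,Y) = (\calC_{/S}\#\bbZ)(X^{(i+n)},Y^{(j+n)})$.

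Next, I would recall the canonical $\bbZ$-action on $\calC_{/S}\#\bbZ$ coming from the $\bbZ$-grading of $\calC_{/S}$, as described in the proof of Proposition~\ref{smash-orb}: on objects, $n\cdot X^{(i)} = X^{(n+i)}$, and on morphisms $n$ acts as the identity of the morphism space under the very same identification. Comparing these two descriptions shows immediately that the two $\bbZ$-actions coincide (this is the content of the sentence in the paper preceding the theorem). This is the only step that could be a source of confusion, because one has to be careful about the additive-versus-multiplicative convention and the sign in $n\mapsto {S'}^{-n}$; beyond that, the identification is tautological.

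Having matched the two actions, I would invoke Proposition~\ref{smash-orb} with $G=\bbZ$ and $\calB=\calC_{/S}$ to obtain a degree-preserving equivalence
\[
\om_{\calC_{/S}}\colon \calC_{/S} \longrightarrow (\calC_{/S}\#\bbZ)/\bbZ = (\calC_{/S}\#\bbZ)/\ang{S'} = \calC/\ang{\bar{S}}.
\]
This produces the required equivalence $\calC/\ang{\bar{S}}\simeq \calC_{/S}$, completing the proof. The genuinely conceptual work has already been done in setting up the colimit orbit category and establishing the preceding proposition; the theorem is essentially a bookkeeping consequence, and I do not anticipate any real obstacle beyond the convention-checking described above.
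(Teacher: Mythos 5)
Your proposal is correct and follows exactly the paper's own route: the paper likewise observes that the $\bbZ$-action $n\mapsto {S'}^{-n}$ on $\calC_{/S}\#\bbZ$ coincides with the canonical $\bbZ$-action coming from the grading, and then applies Proposition~\ref{smash-orb} to $\calB=\calC_{/S}$. Your explicit verification of the sign convention ($ {S'}^{-n}X^{(i)}=X^{(i+n)}$ versus $n\cdot X^{(i)}=X^{(n+i)}$) is precisely the content of the one-sentence remark preceding the theorem in the paper.
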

\section{Quiver presentations of skew monoid categories}
\def\t{\mathbf{t}}
\def\h{\mathbf{h}}

In this section we compute a quiver presentation of the first orbit category $A\fstorbit G$
of a category $A$ and a monoid $G$,
where $A$ is given by a quiver with relations over a field
and $G$ is given by a monoid presentation.
We refer the reader to Howie's book \cite{How} for monoid presentations.
To be precise,
we assume the following setting throughout this section:
\begin{enumerate}
\item $\k$ is a field;
\item $Q:=(Q_{0},Q_{1},\t,\h)$ is a locally finite quiver;
\item $\k[Q]$ is the {\em path category} of $Q$ over $\k$;
\item $\rho$ is a set of morphisms of $\k[Q]$
such that $\ang{\ro} \cap \{e_x \mid x \in Q_0\} = \emptyset$,
where $\ang{\ro}$ is the ideal of $\k[Q]$ generated by $\ro$;
\item $A:=\k[Q,\ro]:= \k[Q]/\ang{\ro}$;
\item $G$ is a monoid with a monoid presentation $G=\ang{S\mid R}$
(even when $G$ is a group we use a monoid presentation);
\item When $G$ has a zero, we add a zero object $0$ to $A$ to form
the category $A \cup \{0\}$, which we denote also by $A$
(Note that $\ds(A \cup \{0\})$ and $\ds A$ are isomorphic as algebras);  and
\item $G$ acts on $A$ by 
a homomorphism 
$G \to \End(A)$.
\end{enumerate}

In (3) recall the definition of the path category $\k[Q]$.
\begin{itemize}
\item
$\obj(\k[Q]) := Q_0$;
\item
For each $x, y\in \k[Q]$,
$\k[Q](x,y)$ is the $\k$-vector space with basis the set of paths from $x$ to $y$
in $Q$; and
\item
The composition of morphisms is given by the composition of paths
as in the definition of the multiplication of the path algebra $\k Q$.
\end{itemize}

Thus we have
$\k(Q,\ro)\iso \ds A$ (see Sect.~3 for the definition); and
$A(x,y)= e_y \k(Q,\ro)e_x$ for all $x,y\in Q_0$,
where $e_x$ is the path of length 0 at each vertex $x \in Q_0$.
The algebra $\k(Q,\ro)$ and the category $A$ are presented
by the same quiver with relations, and we often identify them.

\begin{thm}\label{quiver-presentation}
In the above setting, the category $A\fstorbit G$ and the algebra
$\ds (A\fstorbit G)$ are presented by the following
quiver $Q'$ and the following three kinds of relations:

{\bf Quiver:} $Q'$ is the quiver obtained from $Q$ by adding new arrows
$$(S \times Q_0)':=\{(g,x) \colon x \to gx \mid g \in S, x \in Q_0, gx \ne 0\}.$$
Namely, the quiver $Q'=(Q'_0, Q'_1, \t', \h')$ is defined as follows.
$$
\begin{aligned}
Q_0'&:= Q_0,\\
Q_1'&:= Q_1 \sqcup (S \times Q_0)',\\
(\t'(\al), \h'(\al))&:=(\t(\al),\h(\al)),\quad \forall \al \in Q_1, \\
(\t'(g,x),\h'(g,x))&:=(x,gx),\quad \forall (g,x)\in (S \times Q_0)', 
\end{aligned}
$$
where $\sqcup$ denotes the disjoint union.

{\bf Relations:}

$(1)$ The relations in the category $A$: $\mu = 0$, $\forall \mu \in \ro$;

$(2)$ Skew monoid relations: $(g,y)\al=g(\al)(g,x)$,
$\forall \al\colon x \to y$ in $Q_1$, $\forall g\in S$; and

$(3)$ The relations in the monoid $G$: $\pi(g,x)=\pi(h,x)$,
$\forall (g,h) \in R$, $\forall x \in Q_0$,

where for each $x \in Q_0$ and for each $g \in G\setminus \{0,1\}$, say
$g=g_t\cdots g_1$ $(g_1,\dots, g_t \in S$, $t \ge 1)$ we set 
$\pi(g,x)$ to be the path
$\pi(g,x):=(g_t, g_{t-1}\dots g_1x)\cdots (g_2,g_1x)(g_1,x)$ in $Q'$. Namely, it has the form
$$
gx \xleftarrow{(g_t, g_{t-1}\dots g_1x)} \cdots
\xleftarrow{(g_3,g_2g_1x)} g_2g_1x \xleftarrow{(g_2,g_1x)} g_1 x \xleftarrow{(g_1,x)} x,
$$
and we set $\pi(1,x):= e_x$, $\pi(0,x):=0$.
\end{thm}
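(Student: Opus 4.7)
The plan is to define a $\k$-linear functor $\bar\Phi\colon \k[Q']/\langle \text{the three families of relations}\rangle \to A\fstorbit G$ that is the identity on objects and show it is an isomorphism of categories. First I would build $\Phi\colon \k[Q'] \to A\fstorbit G$ from the free path category by sending each original arrow $\al \in Q_1$ to the element of $(A\fstorbit G)(\t(\al), \h(\al)) = \bigoplus_{h\in G} A(h\,\t(\al), \h(\al))$ with $\al$ in the component $h=1$, and each new arrow $(g, x)$ to the element of $(A\fstorbit G)(x, gx)$ with $\id_{gx}$ in the component $g$. Using the composition rule $(g_\be)(f_\al)=(\sum_{\be\al=\mu}g_\be\cdot\be(f_\al))_\mu$ of the skew category, a short direct calculation confirms that relations (1)--(3) are mapped to zero: (1) reduces to the fact that the component-$1$ inclusion $A\hookrightarrow A\fstorbit G$ is a functor; (2) shows both sides land in component $g$ with value $g(\al)$; and (3) follows by iterating the composition rule, which gives $\Phi(\pi(g_t\cdots g_1, x))=\id_{g_t\cdots g_1 x}$ in component $g_t\cdots g_1$, depending only on the class in $G$. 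Hence $\Phi$ descends to the desired $\bar\Phi$.

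Next I would verify bijectivity on Hom-spaces. Surjectivity is easy: any element of $(A\fstorbit G)(x,y) = \bigoplus_{g\in G}A(gx,y)$ is a $\k$-linear combination of (classes of) paths $p\colon gx \to y$ in $Q$ placed in the component $g$, and for any chosen word representative $g=g_t\cdots g_1\in S^*$ of $g\in G$, a direct computation gives $\bar\Phi(p\cdot\pi(g,x)) = p$ at component $g$. Injectivity requires a normal-form argument: I would show that every class in $\k[Q',\rho'](x,y)$ is congruent to a $\k$-linear combination of elements $[p]\cdot \pi(g,x)$ where $g$ ranges over a chosen set of word representatives of elements of $G$ and $[p]$ ranges over a fixed $\k$-basis of $A(gx,y)$. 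The reduction proceeds in three stages: use relation (2) repeatedly to push every group arrow past every $Q$-arrow on its right, yielding an expression $p'\cdot\omega$ with $p'$ a path in $Q$ and $\omega$ a word in group arrows; then use relation (3), upgraded to arbitrary $G$-equivalent words via the multiplicativity $\pi(g'g'',x) = \pi(g',g''x)\pi(g'',x)$ and the meaning of a monoid presentation, to rewrite $\omega$ as $\pi(g,x)$ for the chosen representative; and finally use relation (1) to reduce $p'$ modulo $\rho$.

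The hard part will be establishing this normal form cleanly --- especially confluence of the rewriting system built from relations (2) and (3). To avoid explicit diamond-lemma bookkeeping, I would instead construct the inverse $\Psi$ of $\bar\Phi$ directly on Hom-spaces by sending $p\in A(gx,y)$ in the component $g$ to the class of $p\cdot\pi(g,x)$ in $\k[Q',\rho'](x,y)$, and verify that $\Psi$ is well defined by checking independence of (i) the chosen word representative of $g\in G$ (using relation (3), the above multiplicativity, and the fact that two words in $S^*$ are $G$-equivalent iff connected by a chain of $R$-moves applied at appropriate basepoints), and of (ii) the chosen representative path for a class in $A(gx,y)$ (using relation (1)). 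Once $\Psi$ is well defined, $\bar\Phi\Psi=\id$ is immediate from the surjectivity computation, while $\Psi\bar\Phi=\id$ reduces to checking that the push-right procedure using relation (2) inverts $\Psi$ on generators of $\k[Q',\rho']$, which I would handle by induction on the length of a path in $Q'$. The presentation of the category $A\fstorbit G$ then yields the presentation of the algebra $\ds(A\fstorbit G)$ via the identification $\ds\calC = \bigoplus_{x,y\in\calC}\calC(x,y)$ recorded in section 3.
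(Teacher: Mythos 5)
Your proposal is correct and follows essentially the same route as the paper: the paper likewise defines a homomorphism $\Ps\colon \k Q'\to \ds(A\fstorbit G)$ on the same generators ($\al\mapsto\tilde\al*1_G$, $(g,x)\mapsto\tilde e_{gx}*\bar g$), checks that the three families of relations die (its Claim 1), proves that $\ovl{\pi(g,x)}$ in $\k Q'/I$ depends only on $\bar g\in G$ via elementary $R$-transitions and the multiplicativity of $\pi$ (its Claim 3), and normalizes an arbitrary path to a linear combination of elements $\ovl{\la\,\pi(\bar g,\bar g\inv\t(\la))}$ by pushing group arrows to the right with relation (2) (its Claim 5). The one divergence is the endgame: rather than constructing your inverse $\Psi$ and verifying $\Psi\bar\Phi=\id$, the paper observes that these normal forms, indexed by a $\k$-basis $M\subseteq\bbP Q$ of $A$ and elements of $G$, are sent by the induced map $\Ph$ onto the evident $\k$-basis $\{\tilde\mu*\bar g\}$ of $\ds(A\fstorbit G)$; hence they span and are linearly independent simultaneously, and $\Ph$ is bijective. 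This buys exactly what you were worried about: no confluence argument and no well-definedness check for $\Psi$, since linear independence is inherited from the target. If you do keep your route, be aware that checking $\Psi\bar\Phi=\id$ ``on generators'' is not sufficient, because $\Psi$ is only a $\k$-linear map on Hom-spaces and is not known a priori to respect composition; the induction on path length you mention (which is in substance the Claim 5 normalization again) is what actually has to carry that step, and at that point your argument and the paper's coincide.
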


\begin{proof}
It is enough to prove the assertion for the algebra $\ds (A\fstorbit G)$.
Note that $S^*$ acts on $A$ by
$S^* \ya{\text{can}} G \to \End(A)$.
For each $\mu\in \k Q$,
we set $\tilde{\mu}:=\mu+\langle\rho\rangle\in A$,
and $\tilde{Q}_{0}:=\{\tilde{e}_{x}\mid x\in Q_{0}\}$.
For each $g \in S^*$ we set $\bar{g}:=R^{\#}g \in G$.
We may assume that $\bar{g} \ne \bar{h}$
if $g \ne h$ for all $g, h \in S$.
Define an ideal $I$ of $\k Q'$ by
\begin{equation*}
\begin{split}
I:= \ang{\ro}_{\k Q'}+\ang{(g,y)\al - g(\al)(g,x) \mid \al\colon x \to y
\text{ in }Q_1, g\in S}_{\k Q'}\\
+ \ang{\pi(g,x) - \pi(h,x)\mid (g,h) \in R, x \in Q_0}_{\k Q'},
\end{split}
\end{equation*}
where in the second term we choose $g(\al)$ as an element of $g(\tilde{\al})$
for each $\al \in Q_1$.
Note that $I$ is determined independent of the choice because
$\ang{\ro} \subseteq I$.

Let $x, y \in Q_0$.
For each $\bar{g} \in G$ let $\si_{\bar{g}}$ be the canonical inclusion
$A(\bar{g}x, y) \to \Ds_{\bar{h} \in G} A(\bar{h}x, y) = (A \fstorbit G)(x, y)$ defined by
$\si_{\bar{g}}(f):= (f\de_{\bar{g}, \bar{h}})_{\bar{h}\in G}$
for all $f \in A(\bar{g}x, y)$.
We set $f * \bar{g} := \si_{\bar{g}}(f)$.

First define a $\k$-algebra homomorphism
$\Ps \colon \k Q' \to \ds(A\fstorbit G)$ by
$$
\begin{aligned}
e_x &\mapsto \tilde{e}_x \ (:= \tilde{e}_x*1_G),
\quad \forall x \in Q_0,\\
\al &\mapsto \tilde{\al} \ (:= \tilde{\al}*1_G),
\quad \forall \al \in Q_1,\\
(g,x) &\mapsto \tilde{e}_{gx}*\bar{g},
\quad \forall (g,x) \in (S \times Q_0)'.
\end{aligned}
$$
Then since
$\k Q'$ is isomorphic to the quotient of the free associative algebra
$\k\ang{Q'_0 \sqcup Q'_1}$
modulo the ideal generated by
the set
$$
\{e_xe_y - \de_{x,y}e_x, e_y \al e_x - \al, e_{gx}(g,x) e_x - (g,x)
\mid x,y\in Q_0, \al \colon x \to y \text{ in } Q_1, (g,x)\in (S\times Q_0)'\}
$$
and since in $\ds(A\fstorbit G)$ we have relations
$$
\tilde{e}_x \tilde{e}_y = \de_{x,y}\tilde{e}_x,
\tilde{e}_y\tilde{\al}\tilde{e}_x = \tilde{\al},
\tilde{e}_{gx}(\tilde{e}_{gx}*\bar{g})\tilde{e}_x =\tilde{e}_{gx}*\bar{g}
$$
for all $x,y\in Q_0$, $\al\colon x \to y$ in $Q_1$,
and $(g,x) \in (S \times Q_0)'$,
we see that $\Ps$ is well-defined.

\setcounter{clm}{0}
\begin{clm}
$\Ps(I)=0$.
\end{clm}
Indeed, first $\Ps(\ro)=0$ shows $\Ps((\k Q')\ro(\k Q'))=0$.
Second, for each $\al \colon x \to y$ in $Q_1$ and $g\in S$,
we have $\Ps((g,y)\al - g(\al)(g,x))
=(\tilde{e}_{gy}*\bar{g})\tilde{\al}
- g(\tilde{\al})(\tilde{e}_{gx}*\bar{g})
=\tilde{e}_{gy} g(\tilde{\al})*\bar{g} - g(\tilde{\al})*\bar{g} = 0$.
Finally, for each $(g,h) \in R$ and $x \in Q_0$
we have
$$
\begin{aligned}
\Ps(\pi(g,x))
&=
(\tilde{e}_{gx}*\bar{g}_t)\cdots (\tilde{e}_{g_2g_1x}*\bar{g}_2)
(\tilde{e}_{g_1x}*\bar{g}_1)\\
&=
(\tilde{e}_{gx}*\bar{g}_t)\cdots (\tilde{e}_{g_2g_1x}
\tilde{e}_{g_2 g_1x}*\bar{g}_2\bar{g}_1)\\
&\ \ \vdots\\
&=\tilde{e}_{gx}*\bar{g}
\end{aligned}
$$
if $g=g_t\cdots g_1$ ($t\ge 1$).
Also $\Ps(\pi(g,x))=\tilde{e}_x=\tilde{e}_{gx}*\bar{g}$
if $g=1$.
Thus in any case we have
\begin{equation}\label{im-pi}
\Ps(\pi(g,x))=\tilde{e}_{gx}*\bar{g}.
\end{equation}
Similarly, $\Ps(\pi(h,x))=\tilde{e}_{hx}\bar{h}$.
Since $(g,h) \in R$, we have $\bar{g}=\bar{h}$, and
$\tilde{e}_{gx}\bar{g}=\tilde{e}_{hx}\bar{h}$.
Hence $\Ps(\pi(g,x)-\pi(h,x))=0$.
As a consequence, we have $\Ps(I)=0$.

By Claim 1 the homomorphism $\Ps$ induces a $\k$-algebra homomorphism
$\Ph\colon \k Q'/I \to \ds(A\fstorbit G)$.
It is enough to show that $\Ph$ is an isomorphism.

Next we fix a $\k$-basis of $\ds(A\fstorbit G)$.
Since $A = \sum_{\mu\in \bbP Q} \k\tilde{\mu}$,
there exists a $\k$-basis $M$ of $A$
that is contained in $\bbP Q$.
Thus $\{\tilde{\mu}*\bar{g} \mid \mu \in M, \bar{g} \in G\}\setminus\{0\}$
forms a $\k$-basis of $\ds(A\fstorbit G)$.

\begin{clm}
$\calM:=\{
\tilde{\mu}*\bar{g} \mid \mu \in M, \bar{g} \in G, \t(\mu)\in g(Q_0)
\}$
forms a $\k$-basis of $\ds(A\fstorbit G)$.
\end{clm}
Indeed,
for each $\mu \in M, \bar{g} \in G$ and for each $x,y\in Q_0$ we have
$$
\begin{aligned}
(\tilde{e}_y*1_G)(\tilde{\mu}*\bar{g})(\tilde{e}_x*1_G)
&=(\tilde{e}_y*1_G)(\tilde{\mu}\bar{g}(\tilde{e}_x)*\bar{g})\\
&=\tilde{e}_y\tilde{\mu}\bar{g}(\tilde{e}_x)*\bar{g}\\
&=\begin{cases}
\tilde{e}_y\tilde{\mu}\tilde{e}_{gx}*\bar{g} &\text{if $gx\ne 0$}\\
0&\text{if $gx=0$}
\end{cases}
\end{aligned}
$$
Therefore
$(\tilde{e}_y*1_G)(\tilde{\mu}*\bar{g})(\tilde{e}_x*1_G)\ne 0$
if and only if $\t(\mu)=gx\in g(Q_0)$ and $\h(\mu)=y$; and
in this case, we have $(\tilde{e}_y*1_G)(\tilde{\mu}*\bar{g})(\tilde{e}_x*1_G)=\tilde{\mu}*\bar{g}$.
This proves the claim.

\begin{clm}
For each $g,h\in S^*$ and $x \in Q_0$,
if $\bar{g}= \bar{h}$ in $G$, then
$\ovl{\pi(g,x)}=\ovl{\pi(h,x)}$ in $\k Q'/I$.
\end{clm}

Indeed, the fact that $\bar{g}=\bar{h}$ in $G$
is equivalent to saying that
$(g,h) \in R^{\#}$.
If $g=h$ in $S^*$, then the assertion is obvious.
Otherwise, there is a sequence of elementary $R$-transitions
connecting $g$ and $h$.
Therefore we may assume that
there exist $(a,b) \in R$ and $c,d\in S^*$ such that
$g=cad, h=cbd$.
Note that we have $adx:=\bar{a}\bar{d}x=\bar{b}\bar{d}x=:bdx$ because $\bar{a}=\bar{b}$. 
Then
$$
\begin{aligned}
\pi(g,x) - \pi(h,x)
&=
\pi(cad,x) - \pi(cbd,x) \\
&=
\pi(c,adx)\pi(a,dx)\pi(d,x)  - \pi(c,bdx)\pi(b,dx)\pi(d,x)\\
&=
\pi(c,adx)(\pi(a,dx) - \pi(b,dx))\pi(d,x) \in I.
\end{aligned}
$$
This proves the claim.

For each $\bar{g} \in G$ with $g\in S^*$, we define
\begin{equation}
\ovl{\pi(\bar{g},x)}:= \ovl{\pi(g,x)},\end{equation}
which is well-defined by Claim 3.

\begin{clm}
Let $x,x',y\in Q_0$ and $\bar{g} \in G$ with $g\in S^*$.
If $gx = y = gx'$,
then $x=x'$.
Hence for each $y \in g(Q_0)$ the inverse image
of $y$ under $\bar{g}$ has exactly one element, which we denote
by $\bar{g}\inv(y)$.
\end{clm}

Indeed, $gx = y = gx'$ shows
$\tilde{e}_{gx} = \tilde{e}_y = \tilde{e}_{gx'} \in A$.
Assume that $x \ne x'$.
Then $\tilde{e}_y = \tilde{e}_y \tilde{e}_y
=\tilde{e}_{gx}\tilde{e}_{gx'}
= g(\tilde{e}_x \tilde{e}_{x'})=g(0)=0$.
But since $A=\k Q/\ang{\ro}$ and 
$\ang{\ro} \cap \{e_x \mid x \in Q_0\} = \emptyset$,
we have $\tilde{e}_y \ne 0$, a contradiction.
Hence we must have $x=x'$.

\begin{clm}
Let $\et \in \bbP Q'$. Then
$\ovl{\et}$ is a linear combination of elements of $\k Q'/I$ of
the form $\ovl{\la\pi(\bar{g},\bar{g}\inv(\t\la))}$
for some $\bar{g}\in G$ and $\la \in M$ with $\t\la \in g(Q_0)$.
$($Note that the element $\bar{g}\inv(\t\la) \in Q_0$ is well-defined by
Claim $4.)$
\end{clm}

Indeed, for each arrow $\al\colon x \to y$ in $Q_1$ we have
\begin{equation}\label{to-right}
\ovl{(g,y)\al} = \ovl{g(\al)(g,x)}\quad\text{in $\k Q'/I$}
\end{equation}
by definition of $I$.
In the path $\et$ by using (\ref{to-right})
we can move factors
of the form $\ovl{(g,y)}$ (with $(g,y)\in (S\times Q_0)'$)
to the right, and finally we have
\begin{equation}\label{norm-form-et}
\ovl{\et}= \ovl{e_y\al_s\cdots\al_1(g_t,x_t)\cdots (g_1,x_1)}
\end{equation}
for some $\al_i \in Q_1$, $g_i \in S$,
$x_i, y \in Q_0$,
where the paths in the right hand side is composable.
Set $g:=g_t\cdots g_1 \in S^*$ and $\la:=e_y\al_s\cdots \al_1$.
Then the composability of the right hand side of (\ref{norm-form-et})
implies that
$$
\ovl{\pi(g,x_1)}=\ovl{(g_t,x_t)\cdots (g_1,x_1)}, \la\in \bbP Q,
\text{ and $\t(\la)=gx_1 \in g(Q_0)$}.
$$
Here $\t(\la)=gx_1$ implies that $x_1=\bar{g}\inv\t(\la)$
by Claim 4.
Hence
$$
\ovl{\et}=\ovl{e_y\al_s\cdots\al_1(g_t,x_t)\cdots (g_1,x_1)}
=\ovl{\la\pi(\bar{g},\bar{g}\inv\t(\la))}.
$$
Now since $M$ is a $\k$-basis of $A$, $\la$ is expressed as a
linear combination of paths in $M$ with the same tail as $\la$
and with the same head as $\la$.
By replacing $\la$ by this linear combination, we obtain the
required expression of $\ovl{\et}$.

\begin{clm}
The set $\calS:=\{\ovl{\mu\pi(\bar{g},\bar{g}\inv\t(\mu))} \mid
\mu \in M, \bar{g}\in G, \t(\mu) \in g(Q_0)\}$
spans $\k Q'/I$.
\end{clm}
Indeed, this is clear from Claim 5.

For each $\mu \in M$, each $\bar{g} \in G$, we have
$$
\Ph(\ovl{\mu\pi(\bar{g},\bar{g}\inv\t(\mu))})=
\tilde{\mu}\tilde{e}_{\t(\mu)}*\bar{g} = \tilde{\mu}*\bar{g}
$$
by \eqref{im-pi}.
Hence the restriction $\Ph\mid_{\calS}\colon \calS \to \calM$
is surjective, and hence so is $\Ph\colon \k Q'/I \to \ds(A\fstorbit G)$.
\begin{clm}
$\calS$ is a $\k$-basis of $\k Q'/I$.
\end{clm}
Indeed, it is enough to show that $\calS$ is linearly independent.
Assume
$$
\sum_{\bar{g}\in G, \mu\in M, \t(\mu) \in g(Q_0)}
t_{\bar{g},\mu}\ovl{\mu\pi(\bar{g},\bar{g}\inv\t(\mu))}=0
$$
in $\k Q'/I$ with $t_{\bar{g},\mu}\in \k$.
Then by applying $\Ph$ to this equality we have
$$
\sum_{\bar{g}\in G, \mu\in M, \t(\mu) \in g(Q_0)}
t_{\bar{g},\mu}\tilde{\mu}*\bar{g}=0
$$
By Claim 2, we have all coefficients $t_{\bar{g},\mu}$ are zero.

By Claim 7 we see that $\Ph \colon \k Q'/I \to \ds(A\fstorbit G)$
is a bijection, i.e., an isomorphism.
\end{proof}

\begin{rmk}\label{computation-orbitcat}
(1)
By Definition \ref{dfn:skew-monoid-cat} we can compute a quiver presentation
of the skew monoid algebra $\k(Q,\ro) * G$ using
the computation of $A\fstorbit G$ described in the theorem above.
When $Q_0$ is finite and $G$ is a group, this skew monoid algebra coincides with
the usual skew group algebra $\k(Q,\ro) * G$, and hence the theorem above
gives also a way to compute skew group algebras.

(2)
Let $G$ be a group acting freely on a basic category $\calC$ as in
the classical covering setting.
Then we can form the orbit category $\calC\oorbit G$ (see Remark \ref{oldorbit} (1)),
which is expressed as
$\calC\oorbit G \simeq \bas(\calC\fstorbit G)$.
Hence we can use
the theorem above to compute also  $\calC\oorbit G$.

(3)
For a quiver $Q$, recall that an ideal $J$ of $\k[Q]$ (resp.\ of $\k Q$)
is called {\em admissible}
if $(\k[Q]^{+})^h \le J \le (\k[Q]^{+})^2$ for some $h \ge 2$, where
$\k[Q]^{+}$ is the ideal generated by all arrows in $Q$.
If $J$ is admissible, then $A:=k[Q]/J$ is basic, the radical of $A$ is given by
$k[Q]^+/J$, the ordinary quiver of $A$ is again $Q$, and all idempotents
given by vertices are primitive.
Note that in the theorem above the ideal $I$ of $\k[Q']$ (resp.\ of $\k Q'$)
generated by the relations in our presentation is not always admissible
even if we start from the setting that the ideal $\ang{\ro}$ is admissible.
This may seem to be a weak point because
sometimes the idempotents given by vertices are not primitive,
and the quiver is not determined uniquely,
but this enables us to handle also non-basic categories, and 
anyway at lease it gives a quiver presentation, so if necessary, one can later adjust the
presentation to admissible one as seen in the next section.
%
\end{rmk}

\section{Examples}
Throughout this section $\k$ is a field.
\subsection{Classical example}
We begin with the following classical example in \cite{RR}.
\begin{exm}
Example of $(\calC/G)\# G \simeq \calC$.
Let $G:=\ang{g\mid g^2=1}$ be the cyclic group of order 2,
$Q$ the following quiver:
$$
\xymatrix{
& 1 \ar[dl]_{\al}\ar[dr]^{\al'}\\
2\ar[d]_{\be} && 2'\ar[d]^{\be'}\\
3 && 3'
}
$$
Define an action of $g$ on $Q$ by the permutation
$\pmat{1\ 2\ 2'\ 3\ 3'\\1\ 2'\ 2\ 3'\ 3}$ $= (2\ 2')(3\ 3')$
of vertices of $Q$, and define an action of $g$ on the
category $\calC:= \k Q$ by the linearization of this.
Clearly this action is not free.
We compute $\calC/G$ and $\calC *G$ by using
Theorem \ref{quiver-presentation}.
First $\calC/G$ is given by the following quiver
$$
\xymatrix{
& 1 \ar@(ul,ur)^{(g,1)}\ar[dl]_{\al}\ar[dr]^{\al'}\\
2\ar[d]_{\be} \ar@/^/[rr]^{(g,2)}&& 2'\ar[d]^{\be'}\ar@/^/[ll]^{(g,2')}\\
3 \ar@/^/[rr]^{(g,3)}&& 3'\ar@/^/[ll]^{(g,3')}
}
$$
with the following relations:

{\bf From $g^2=1$:}
$
(g,1)^2 = e_1,
\left\{\begin{aligned}(g,2')(g,2)&=e_2\\(g,2)(g,2')&=e_{2'}\end{aligned}\right.,
\left\{\begin{aligned}(g,3')(g,3)&=e_3\\(g,3)(g,3')&=e_{3'}\end{aligned}\right..
$

{\bf From skew group relations:}
$\left\{\begin{aligned}&(g,2)\al=\al'(g,1)\\&(g,2')\al'=\al(g,1)\end{aligned}\right.,
\left\{\begin{aligned}&(g,3)\be=\be'(g,2)\\&(g,3')\be'=\be(g,2')\end{aligned}\right..
$
Then $(\calC/G)\# G$ is just the ``double'' $M_2(\calC)$ of $\calC$,
i.e., the category obtained from $\calC$ by adding one object isomorphic to $x$
for each object $x$ in $\calC$.
By Theorem \ref{liberalization}
this $(\calC/G)\# G$ is $G$-equivariantly equivalent to the original algebra $\calC$, and is a ``liberalization'' of the $G$-action of $\calC$.

Here the algebra $\calB:= \bas(\calC/G)$ is given by the following quiver with relations:
$$
\vcenter{
\xymatrix{
1\ar@(ul,ur)^{(g,1)}\ar[d]_{\al}\\
2\ar[d]_{\be}\\
3
}},\quad (g,1)^2 = e_1
$$
When $\chr \k \ne 2$, we have $\bas(\calC/G)(1,1)=\k\ep_1\times \k\ep_2$, where
$\ep_1 :=\frac{1}{2}(e_1 + (g,1)), \ep_2:= \frac{1}{2}(e_1 - (g,1))$
by Chinese Remainder Theorem.
Hence $\calC * G$ is given by the following quiver
with no relations:
$$
\xymatrix{
(1,\ep_1) \ar[rd]_{\al} &&(1,\ep_2)\ar[dl]^{\bar{\al}}\\
&2\ar[d]_{\be}\\
&3
}
$$
As well known \cite{RR} if we define an action of $g$
to this algebra by exchanging $(1,\ep_1)$ and $(1,\ep_2)$, then
the skew group algebra $(\calC *G)*G$ is isomorphic to the original
algebra $\calC$, which can be checked by the same way as above.
\end{exm}

\begin{exm}
\label{exm:liberalization}
Example of $(\calB \# G)/G \simeq \calB$.
Consider the algebra $\calB$ in the previous example as a category
with no assumption on $\chr \k$.
Note that the $G$-grading of $\calB$ is given by
$\deg(\al) = \deg(\be) = g^0=1$ and $\deg(x)=g^{-1} = g$, where we
put $x:=(g,1)$.
Then $\calB\# G$ is given by the following quiver with relations: 
$$
\vcenter{
\xymatrix{
1^{(1)}\ar@<1ex>[r]^{x^{(1)}}\ar[d]_{\al^{(1)}} & 1^{(g)}\ar@<1ex>[l]^{x^{(g)}}\ar[d]^{\al^{(g)}}\\
2^{(1)}\ar[d]_{\be^{(1)}} &2^{(g)}\ar[d]^{\be^{(g)}}\\
3^{(1)} & 3^{(g)}
}},\quad x^{(g)}x^{(1)} = e_{1^{(1)}}, x^{(1)}x^{(g)} = e_{1^{(g)}}
$$
whose $G$-action is given by the permutation
$(1^{(1)}\ 1^{(g)})(2^{(1)}\ 2^{(g)})(3^{(1)}\ 3^{(g)})$ and is free.
This is $G$-equivariantly equivalent to the original algebra $\calC$,
and is a smaller ``liberalization'' of the $G$-action of $\calC$.
$(\calB \# G)/G$ is also the double $M_2(\calB)$ of $\calB$
and is equivalent to $\calB$.
If again $\chr \k \ne 2$,
we see that $(\calC/G)/G \simeq (\calC/G)\# G \ (\simeq \calC)$
this explains the phenomenon above that $(\calC *G)*G \iso \calC$.
\end{exm}

\subsection{Infinite cyclic group}
\begin{exm}
Let $p:=\chr \k$ and
$A:=\k[\al]/(\al^3)$, namely the algebra given by the following
quiver with relations:
$$
\xymatrix{1 \ar@(ul,dl)_{\al}}, \quad \al^3 = 0.
$$
Further let $g$ be the automorphism of $A$ defined by $g(1):=1$ and
$g(\al):= \al + \al^2$, and set $G$ to be the cyclic group
generated by $g$.
Then $G$ has the presentation
$$
G = 
\begin{cases}
\ang{g\mid g^p=1} &\text{if } p>0;\\
\ang{g,g\inv\mid gg\inv = 1 = g\inv g} &\text{if } p=0.
\end{cases}
$$
Then by Theorem \ref{quiver-presentation},
$A*G$ is given by the following quivers with relations:
$$
A*G =
\begin{cases}
\xymatrix{1 \ar@(ul,dl)_{\al} \ar@(ur,dr)^{x}}, \quad x^p=0, \al^3=0,
\al x = x\al + x\al^2 +\al^2 &\text{if } p>0;\\
\xymatrix{1 \ar@(ul,dl)_{\al} \ar@(ul,ur)^{x} \ar@(dl,dr)_{x\inv}},
\quad xx\inv=1=x\inv x, \al^3=0,
\al x = x\al + x\al^2 &\text{if } p=0,
\end{cases}
$$
where we put $x:=(g,1) -1$ in the first case, and
$x:=(g,1)$ in the second case.
\end{exm}

\subsection{Brou\'{e}'s conjecture for $SL(2,4)$}
We can deal with the same example as in \cite[Example 6.2]{Asa97} by using the cyclic group $G:= \ang{g \mid g^2=1}$
instead of the infinite cyclic group.
\begin{exm}
Let $\La$ and $\Pi$ be the algebras given by
the following quivers with zero relations:
$$
\La :  \xymatrix{
2 \ar@<-1ex>[r]_{\al_{2}} & 1  \ar@<-1ex>[l]_{\al_{1}}\ar@<1ex>[r]^{\be_{1}} & 3 \ar@<1ex>[l]^{\be_{2}}
},\quad
\left\{
\begin{aligned}
&\be_{2}\be_{1}\al_{2}\al_{1} = \al_{2}\al_{1}\be_{2}\be_{1}\\
&\al_{1}\al_{2} = 0 = \be_{1}\be_{2}
\end{aligned}
\right.
$$

$$
\Pi : 
\vcenter{
\xymatrix@C8ex@R8ex{
& 1\ar@<-1ex>[dl]_{\al_{1}} \ar@<-1ex>[rd]_{\ga_{2}}\\
2 \ar@<-1ex>[ru]_{\al_{2}} \ar@<-2ex>[rr]_{\be_{1}} && 3 \ar@<-1ex>[lu]_{\ga_{1}} 
\ar[ll]_{\be_{2}}
}},\quad
\left\{
\begin{aligned}
\al_{2}\al_{1}=\ga_{1}\ga_{2}\\
\be_{2}\be_{1} = \al_{1}\al_{2}\\
\ga_{2}\ga_{1}= \be_{1}\be_{2}
\end{aligned} 
\right.
,\quad
\left\{
\begin{aligned}
\be_{1}\al_{1} = 0 = \al_{2}\be_{2}\\
\ga_{1}\be_{1} = 0 = \be_{2}\ga_{2}\\
\al_{1}\ga_{1}=0= \ga_{2}\al_{2}
\end{aligned} 
\right.
$$
As well known, when $\chr \k =2$, $\La$ is Morita equivalent to the principal block of the group algebra
$\k SL(2,4)$ and $\Pi$ is its Brauer correspondence; and
in this case Brou\'e's conjecture \cite{Br} claims that $\La$ and $\Pi$ are derived equivalent.  Here we show this fact without the assumption that
$\chr \k = 2$.
First we define $G$-gradings of $\La$ and $\Pi$ as follows:
$$
\deg(\al):=\begin{cases} g & \text{if $\al \in I$}\\
1 & \text{otherwise},\end{cases}
\quad\text{where }
I=\begin{cases}\{\al_{1}, \be_{1}\} & \text{for $\La$}\\
\{\be_{1}, \be_{2}\} & \text{for $\Pi$}.
\end{cases}
$$
A direct computation shows that
$\La \# G$ and $\Pi \# G$ has the form
\begin{equation}
\label{trivial-extension-form}
\La \# G \iso T(A),\quad \Pi \# G \iso T(B),
\end{equation}
where $A$ and $B$ are the algebras given by the following quivers with zero relations:
$$
A:=
\vcenter{
\xymatrix{
2 \ar@{--}@/^/[dd]\ar[rd]^{\al}&& 3 \ar[ld]_{\be}\ar@{--}@/_/[dd]\\
&1 \ar[rd]^{\de}\ar[ld]_{\ga}\\
5 \ar[rd]_{\ep}&& 6 \ar[ld]^{\ze}\\
&4
}},
\qquad
B:=
\vcenter{
\xymatrix{
&1 \ar[rd]\ar[ld]\ar@{--}@/^/[rddd]\ar[ld]\ar@{--}@/_/[lddd]\\
2 \ar[dd]&& 3 \ar[dd]\\
&4 \ar[rd]\ar[ld]\\
6 && 5
}}
$$
and for an algebra $C$, $T(C)$
denotes the trivial extension algebra
$C \ltimes DC$\ of $C$ by $DC:=\Hom_{\k}(C,\k)$.
Define an action of $g$ by $g(x):=x+3 \pmod 6$ both
on $T(A)$ and $T(B)$.
Then the isomorphisms in \eqref{trivial-extension-form} can be
taken to be strictly $G$-equivariant.
Hence by Proposition \ref{smash-orb} we have
\begin{equation}
\label{orbit-cat-form}
\La \simeq T(A)/G, \quad \Pi \simeq T(B)/G.
\end{equation}
Now define a full subcategory $E$ of $\Kb(\prj A)$ by the following
six objects: $T_{i}:=\udl{e_{i}A}$ ($i=2,3,5,6$), $T_{1}:=\udl{(e_{2}A\ds e_{3}A} \ya{(\al, \be)} e_{1}A)$,
and $T_{4}:=\udl{(e_{5}A\ds e_{6}A} \ya{(\ep,\ze)} e_{4}A)$, where the underline stands for the
place of degree zero.
Then $E$ is a tilting subcategory and an isomorphism $\ps\colon E \to B$ is defined by sending
$T_{i}$ to $i$ for all vertices $i=1,\dots, 6$ of the quiver of $B$.
This canonically induces a tilting subcategory $E'$ of $\Kb(\prj T(A))$ and
an isomorphism $\ps' \colon E' \to T(B)$ as in Rickard \cite{Ric2}.
As easily seen $\ps'$ can be taken to be $G$-equivariant, and therefore we see that
$T(A)/G$ and $T(B)/G$ are derived equivalent by Theorem \ref{thm:skewgp-cat}.
Hence by \eqref{orbit-cat-form} $\La$ and $\Pi$ are derived
equivalent, as desired.

In the above $T(A)$ and $T(B)$ were constructed from
$\La$ and $\Pi$ by taking smash products
as in \eqref{trivial-extension-form}.
If $\chr \k \ne 2$, the same thing can be done also by taking skew group algebras.
Indeed, define the $G$-actions on $\La$ and $\Pi$ as follows:
$g$ fixes all vertices, and 
$$
g(\al):= \begin{cases} -\al & \text{if $\al\in I$}\\
\al & \text{otherwise},\end{cases}
\quad\text{where }
I=\begin{cases}\{\al_{1}, \be_{1}\} & \text{for $\La$}\\
\{\be_{1}, \be_{2}\} & \text{for $\Pi$}.
\end{cases}
$$
Then $\La * G \iso T(A)$ and $\Pi * G \iso T(B)$.
\end{exm}

\subsection{Derived equivalence}
\begin{exm}
Here assume that $\chr \k =0$.
Let $G =\ang{g,g\inv\mid gg\inv = 1 = g\inv g}$ be the infinite cyclic group.
Define algebras $A$ and $B$ as follows:
$$
A : \xymatrix{
1 \ar@<1ex>[r]^{\al_{1}} & 2  \ar@<1ex>[l]^{\al_{2}}\ar@<1ex>[r]^{\be_{1}} & 3 \ar@<1ex>[l]^{\be_{2}}
},\quad
\left\{
\begin{aligned}
&\al_{i}\be_{j} = 0 = \be_{i}\al_{j}\text{ for all $i, j$}\\
&\al_{1}\al_{2} =  (\be_{2}\be_{1})^{2},
\end{aligned}
\right.
$$

$$
B :
\vcenter{
\xymatrix{
& 1\ar[dl]_{\al_{1}}\\
2 \ar[rr]_{\al_{2}} && 3 \ar[lu]_{\al_{3}} 
}},\quad
\al^{7}=0 \ (\text{paths of length 7} = 0).
$$
Then $A$ and $B$ are derived equivalent by a tilting subcategory $E$ of $\Kb(\prj A)$
defined as follows.
$$
\xymatrix{
& (\udl{e_{2}A}\ya{\al_{2}} e_{1}A) \ar[dl]_{(\id,0)}\\
\udl{e_{2}A} \ar[rr]_{(\be_{1},0)} && \udl{e_{3}A} \ar[lu]_{(\be_{2},0)} 
}
$$
We have an obvious isomorphism $\ps\colon E \to B$.
Now define $G$-actions on $A$ and $B$ as follows.

On $A$: $g$ fixes all vertices and all $\al_{i}$, and $g(\be_{i}):= \be_{i}+\be_{i}\be_{i+1}\be_{i}$
for all $i$.

On $B$: $g$ fixes all vertices and $\al_{1}$, and $g(\al_{i}):=\al_{i}+\al_{i}\al_{i+2}\al_{i+1}\al_{i}$
$\pmod 3$ for $i \ne 1$.
Then as easily seen $\ps$ is $G$-equivariant, and hence $A*G$ and $B*G$ are derived equivalent.
Here $A*G$ and $B*G$ are presented as follows.
$$
A*G :
\xymatrix{
1 \ar@(ul,ur)^{x}\ar@(dr,dl)^{x\inv} \ar@<1ex>[r]^{\al_{1}}
& 2 \ar@(ul,ur)^{y}\ar@(dr,dl)^{y\inv} \ar@<1ex>[l]^{\al_{2}}\ar@<1ex>[r]^{\be_{1}} & 3 \ar@(ul,ur)^{z}\ar@(dr,dl)^{z\inv}\ar@<1ex>[l]^{\be_{2}}
},\quad
\left\{
\begin{aligned}
&\al_{i}\be_{j} = 0 = \be_{i}\al_{j}\text{ for all $i, j$},\ \al_{1}\al_{2} =  (\be_{2}\be_{1})^{2}\\
&xx\inv = 1 = x\inv x, yy\inv=1=y\inv y, zz\inv=1=z\inv z\\
&\al_{1}x=y\al_{1}, y\al_{2}=\al_{2}x, \\
&\be_{1}y=z\be_{1}+z\be_{1}\be_{2}\be_{1}\\
&\be_{2}z=y\be_{1}+y\be_{2}\be_{1}\be_{2},
\end{aligned}
\right.
$$

$$
B*G :
\vcenter{
\xymatrix{
& 1  \ar@(dl,ul)^{x}\ar@(dr,ur)_{x\inv} \ar[dl]_{\al_{1}}\\
2  \ar@(ur,ul)_{y}\ar@(dr,dl)^{y\inv} \ar[rr]_{\al_{2}} &&
3 \ar@(ul,ur)^{z}\ar@(dr,dl)^{z\inv} \ar[lu]_{\al_{3}} 
}},\quad
\left\{
\begin{aligned}
&\al^{7}=0, xx\inv = 1 = x\inv x, yy\inv=1=y\inv y, zz\inv=1=z\inv z\\
&\al_{1}x=y\al_{1}\\
&\al_{2}y=z\al_{2}+z\al_{2}\al_{1}\al_{3}\al_{2}\\
&\al_{3}z=x\al_{3}+x\al_{3}\al_{2}\al_{1}\al_{3}.
\end{aligned}
\right.
$$
\end{exm}

\subsection{Preprojective algebra, monoid case}
\begin{exm}
Let $Q$ be the following quiver of type $A_{4}$:
$$\xymatrix@C2ex@R2ex{
&& 1\\
&2 \ar[ru]^{a_{1}}\\
3\ar[ru]^{a_{2}} \ar[rd]_{a_{3}}\\
& 4
}
$$
and let $A:=\k Q$.
Then the Auslander-Reiten quiver $\Ga_{A}$ is as follows.
$$
\xymatrix{
&& P_{1} \ar[rd]\ar@{--}[rr] &&\circ\\
&P_{2} \ar[rd]\ar[ru]^{a_{1}} \ar@{--}[rr] &&\circ\ar[rd]\ar[ru]\\
P_{3}\ar[ru]^{a_{2}} \ar[rd]_{a_{3}}\ar@{--}[rr] &&\circ \ar[ru]\ar[rd]\ar@{--}[rr] &&\circ\ar[rd]\\
& P_{4} \ar[ru]\ar@{--}[rr] &&\circ \ar[ru]\ar@{--}[rr] &&\circ
}
$$
Then $\mod A$ is equivalent to the additive hull $\add \k(\Ga_{A})$ of the mesh
category  $\k(\Ga_{A})$ of $\Ga_{A}$.
Let $G:=\ang{\ta\inv\mid \ta^{-3}=0}$, which is a monoid with zero.
By definition the preprojective algebra $\calP(Q)$ of $Q$ is given by
$$
\begin{aligned}
\calP(Q)&:=\Ds_{n\ge 0}(\mod A)(A, \ta^{-n}A)\\
 &\iso\Ds_{n\ge 0}(\add \k(\Ga_{A}))(A, \ta^{-n}A)\\
 &\iso (\add \k(\Ga_{A}) \sndorbit G)(A, A),
\end{aligned}
$$
where $\k(\Ga_{A})\sndorbit G \iso (\k(\Ga_{A})\op \fstorbit G)\op
\iso \k(\Ga'_{A})/I$.
Here $\Ga'_{A}$ is given by
$$
\xymatrix{
&& P_{1} \ar[rd]\ar@{<-}[rr] &&\circ\\
&P_{2} \ar[rd]\ar[ru]^{a_{1}} \ar@{<-}[rr] &&\circ\ar[rd]\ar[ru]\\
P_{3}\ar[ru]^{a_{2}} \ar[rd]_{a_{3}}\ar@{<-}[rr] &&\circ \ar[ru]\ar[rd]\ar@{<-}[rr] &&\circ\ar[rd]\\
& P_{4} \ar[ru]\ar@{<-}[rr] &&\circ \ar[ru]\ar@{<-}[rr] &&\circ
}
$$
and $I$ is generated by all mesh relations and commutativity relations
$x_j a = (\ta a) x_i$ for all old arrows $a: i \to j$ such that $\ta a$ exists,
where $x_k$ is the new arrows starting at a vertex $k$ and $\ta a$ is the Auslander-Reiten translation
of an old arrow $a$.
By computing the endomorphism algebra of $A$ ($=P_1\ds\cdots\ds P_4$)
inside this category we get
$$
\calP(Q):
\vcenter{
\xymatrix@C2ex@R2ex{
&& 1\ar@/^/[dl]^{a'_{1}}\\
&2 \ar[ru]^{a_{1}}\ar@/^/[dl]^{a'_{2}}\\
3\ar[ru]^{a_{2}} \ar[rd]^{a_{3}}\\
& 4 \ar@/^/[ul]^{a'_{3}}
}}, \quad
\left\{
\begin{aligned}
&a_{1}a'_{1}=0,\\
&a'_{1}a_{1}+a'_{2}a_{2}=0,\\
&a'_{3}a_{3}+a'_{2}a_{2}=0,\\
&a_{3}a'_{3}=0.
\end{aligned}
\right.
$$
\end{exm}

\subsection{Nakayama permutations}\label{Nak-perm}
K.~Oshiro asked the following problem to the author in March, 2008.
We give an answer to it using the classical covering technique and
Theorem \ref{quiver-presentation} along the line of Remark \ref{computation-orbitcat}(2).

\begin{prb-nn}
For each permutation $\si\in S_n$ of the set $\{1,\dots, n\}$,
construct a self-injective algebra $A$ whose Nakayama permutation is $\si$,
and if possible give such an example by an algebra with radical cube zero.
\end{prb-nn}

First decompose the $\si$ into a product of cyclic permutations:
$$\si=(x_{11}\ x_{12}\ \cdots x_{1,t(1)})\cdots (x_{m1}\ x_{12}\ \cdots x_{m,t(m)})$$
such that $\{1,\dots, n\}= \{x_{11}\ x_{12}\ \cdots x_{1,t(1)}\}\cup \{x_{m1}\ x_{12}\ \cup x_{m,t(m)}\}$ is a disjoin union
(we allow $t(i)=1$ here). Then $t(1)+\cdots +t(m)=n$.
Further we set
$x_{i,t(i)+1}:=x_{i1}$ (for all $i$) and consider $j$ in $x_{ij}$
modulo $t(i)$.

\begin{exm}
For instance,
for $\si:=\pmat{1&2&3&4\\1&2&4&3}=(1)(2)(3\ 4) \in S_4$,
we have $\si=(x_{11})(x_{21})(x_{31}\ x_{32})$ with
$t(1)=1, t(2)=1, t(3)=2$;
$x_{11}=1, x_{21}=2, x_{31}=3, x_{32}=4$.
\end{exm}

Next define a quiver $Q:=(Q_0, Q_1)$ as follows.
\begin{align*}
Q_0:=&\{1,\ldots, n\}= \bigcup_{i=1}^m\{x_{i1},\ldots, x_{i,t(i)}\}\\
Q_1:=&\{\al_{ijl} \mid 1\le i \le m-1, i\not\in 2\bbZ, 1 \le j \le t(i), 1\le l \le t(i+1)\}\\
\cup &\{\be_{ijl} \mid 1\le i \le m-1, i\in 2\bbZ, 1 \le j \le t(i), 1\le l \le t(i+1)\}
\end{align*}
with orientations
\begin{equation}\label{orientation}
\xymatrix{
x_{ij} \ar[r]^{\al_{ijl}} & x_{i+1,l}
},
\ 
\xymatrix{
x_{ij} & \ar[l]_{\be_{ijl}} x_{i+1,l}
}.
\end{equation}

For instance in the example above, we have
$$
\xymatrix{
x_{11} \ar[r]^{\al_{111}} &x_{21} &\ar[l]_{\be_{211}} x_{31}\\
&&\ar[lu]^{\be_{212}} x_{32}
}.
$$

Then $\si$ can be regarded as a permutation of $Q_0$ and it is uniquely
extended to an automorphism of the quiver $Q$.
By identifying $\si$ with the linearization of this, we can regard
$\si$ as an automorphism of the path-algebra $\k Q$.
Further $\si$ is canonically extended to an automorphism $\hat{\si}$
of the repetition $\widehat{\k Q}$.

\begin{thm}
Let $A$ be the twisted 1-fold extension of $\k Q$ by $\si$
$($\cite{Asa02}$)$, namely
$A:=T_{\si}^1(\k Q):=\widehat{\k Q}/\ang{\nu\hat{\si}}$, where
$\nu$ is the Nakayama automorphism of $\widehat{\k Q}$.
Then $A$ is a self-injective algebra with radical cube zero and
$\si$ is its Nakayama permutation.
\end{thm}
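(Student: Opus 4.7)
The plan is to realize $A = T_\sigma^1(\k Q)$ as the orbit algebra of a classical Galois covering by the repetitive category $\hat{\k Q}$, and then read off self-injectivity, the Nakayama permutation, and the bound on the radical from that description. First I would extend $\sigma$ to a quiver automorphism $\hat\sigma$ of the repetition acting identically on each copy of $Q$; it commutes with the Nakayama shift $\nu$ of $\hat{\k Q}$. Setting $\tau := \nu\hat\sigma$ and $G := \ang{\tau}$, one checks that $G$ is infinite cyclic and acts freely and locally boundedly on $\hat{\k Q}$: $\nu^n$ shifts the level by $n\ne 0$ so no nonidentity power of $\tau$ has a fixed object, and between any two objects only finitely many translates contribute nonzero Hom. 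By the classical covering theory of \cite{Ga} (or by Section~2 of this paper), the canonical functor $\hat{\k Q} \to \hat{\k Q}/G$ is a Galois covering; the orbit algebra is finite-dimensional over $\k$ and, by definition, equals $A$.

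Self-injectivity of $A$ then follows from the standard fact that the quotient of a self-injective locally bounded $\k$-category by a free locally bounded action of a cyclic group commuting with the Nakayama automorphism is again a self-injective $\k$-algebra. To compute the Nakayama permutation I would use that in the orbit algebra one has $\tau = 1$, which forces $\nu$ and $\hat\sigma^{-1}$ to induce the same permutation on $\obj(\hat{\k Q})/G \iso Q_0$; with the sign conventions of \cite{Asa02} adopted in the definition of $T_\sigma^1$, this permutation is exactly $\sigma$.

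For the radical cube zero assertion, I would exploit the very restricted shape of $Q$ imposed by~\eqref{orientation}: arrows exist only between consecutive layers with alternating orientation, so no two arrows of $Q$ are composable and hence $\rad^2(\k Q) = 0$. The radical of $\hat{\k Q}$ is generated by the $Q$-arrows on each level together with the bridging morphisms coming from the bimodule $D(\k Q)$ between adjacent levels. No two bridges compose, any threefold product involving two $Q$-arrows vanishes since $\rad^2(\k Q) = 0$, and the one remaining pattern---a single bridge flanked by $Q$-arrows---vanishes for the same reason via the bimodule relations. Hence $\rad^3 \hat{\k Q} = 0$, and this descends to $\rad^3 A = 0$.

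The main obstacle I expect is the bookkeeping that pins down the induced Nakayama permutation of $A$ as $\sigma$ on the nose rather than $\sigma^{-1}$, since this requires a careful unwinding of the relation $\nu\hat\sigma = 1$ in the orbit together with the convention chosen for the Nakayama functor; the freeness, local boundedness, self-injectivity, and the radical estimate are all essentially formal consequences of the covering setup once this identification is in place.
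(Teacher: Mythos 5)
Your route is genuinely different from the paper's and is viable. The paper proves the theorem by writing down an explicit quiver presentation of $A$ (the quiver $Q_A$ with the listed zero and commutativity relations) and then exhibiting the Loewy structure of each indecomposable projective $P(x_{ij})$ as a three--layer ``diamond'' with top $x_{ij}$, middle layer the neighbours in layers $i\pm 1$, and simple socle $x_{i,j+1}$; radical cube zero, self-injectivity and the Nakayama permutation are then all read off from that picture at once. You instead stay at the level of the Galois covering $\hat{\k Q}\to\hat{\k Q}/\ang{\nu\hat\si}$ and invoke general transfer principles: freeness and local boundedness of the action (correct, since $(\nu\hat\si)^n$ shifts the level by $n$), descent of self-injectivity along free locally bounded actions (standard, and indeed how $T_\si^1$ is treated in \cite{Asa02}), and preservation of the radical filtration under the covering. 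Your radical argument is sound: the alternating orientation in \eqref{orientation} makes every source of an arrow lie in an odd layer and every target in an even one, so $\rad^2(\k Q)=0$; two bridging morphisms compose to zero because $\hat{\k Q}$ has no morphisms between levels differing by two; and an arrow--bridge--arrow product vanishes because evaluating the dual bimodule element requires a nonzero product of at least two arrows of $Q$. What each approach buys: yours is shorter and more conceptual, while the paper's explicit presentation also yields the quiver of $A$ and the module diagrams used in the subsequent examples.

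The one place your argument is not yet a proof is exactly the point you flag: the identification of the induced Nakayama permutation as $\si$ rather than $\si^{-1}$. As written, your sentence is internally inconsistent --- in the orbit one has $\nu=\hat\si^{-1}$, and $\hat\si^{-1}$ induces $\si^{-1}$ on $Q_0$, yet you conclude $\si$. The resolution is that with the paper's convention ($\pi$ is the Nakayama permutation iff $\soc P(x)\iso\top P(\pi x)$) one has $\soc \hat P_{(x,n)}\iso \top\hat P_{(x,n-1)}$ in $\hat{\k Q}$, and the identification $(x,n-1)\sim \nu\hat\si(x,n-1)=(\si x,n)$ then gives $\soc P(x)\iso\top P(\si x)$ in the orbit, i.e.\ the permutation is induced by $\nu^{-1}=\hat\si$, not by $\nu$. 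This is a two-line computation, but it must be done; it is precisely the content that the paper's explicit statement $\soc P(x_{ij})\iso\top P(x_{i,j+1})$ supplies, and without it your proof establishes only that the Nakayama permutation is $\si^{\pm 1}$.
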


\begin{proof}
The orbit category construction above is the classical one.
As explained in Remark \ref{computation-orbitcat}(2) we can use
Theorem \ref{quiver-presentation} to compute $A =\widehat{\k Q}\oorbit \ang{\nu\hat{\si}}$,
and we see that
$A$ has the following presentation by a quiver with relations:
The quiver $Q_A:=(Q'_0, Q'_1, t', h')$ of $A$ is defined as follows.
$Q_0=Q'_0$, $Q'_1=\{\al_{ijl}, \be_{ijl} \mid
1\le i \le m-1, 1 \le j \le t(i), 1\le l \le t(i+1)\}$
and the orientations of $\al_{ijl}, \be_{ijl}$ are defined
by \eqref{orientation}; and relations are given by zero relations and
commutativity relations below.

{\bf zero relations:}

$\al_{ijl}\al_{rst}=0; \be_{ijl}\be_{rst}=0,$
for $\forall i,j,l,r,s,t$;

$\be_{ijl}\al_{rst}=0$ unless $(r,s,t)=(i,j,l+1)$;
$\al_{ijl}\be_{rst}=0$ unless $(r,s,t)=(i,j+1,l)$;

{\bf commutativity relations:}

$\al_{i-1,p,j+1}\be_{i-1,p,j} = \be_{i,j+1,l}\al_{ijl}$ ($2 \le i \le m-1$);

$\be_{i,j+1,l}\la_{ijl}=\be_{i,j+1,p}\la_{ijp}$ ($1 \le i \le m-1$);

$\al_{i-1,l,j+1}\be_{i-1,l,j} = \al_{i-1,p,j+1}\be_{i-1,p,j}$ ($2 \le i \le m$).

This shows that the indecomposable projective modules
$P(x_{ij}):=Ae_{x_{ij}}$ have the following structures
for all $x_{ij}\in Q'_0$:
$$
\xymatrix{
&&& x_{ij}\ar[llld]\ar[ld]\ar[rd]\ar[rrrd]\\
x_{i-1,1}\ar[rrrd]&\cdots&x_{i-1,t(i-1)}\ar[rd] && x_{i+1,1}\ar[ld]&\cdots&x_{i+1,t(i+1)}, \ar[llld]\\
&&&x_{i,j+1}
}
$$
where for $i=1$ delete the left side part
$x_{i-1,1},\ldots,x_{i-1,t(i-1)}$ and
for $i=m$ delete the right side part
$x_{i+1,1},\ldots,x_{i+1,t(i+1)}$.
Therefore $A$ has the radical cube zero and
$\soc P(x_{ij})\iso \top P(x_{i,j+1})$, and hence
$A$ is a self-injective algebra with Nakayama permutation $\si$.
\end{proof}

For instance in the example above $Q_A$ has the form
$$
\xymatrix{
x_{11} \ar@<1ex>[r]^{\al_{111}} &x_{21} \ar[l]^{\be_{111}}\ar@<1ex>[r]^{\al_{211}}\ar[rd]^(.7){\al_{212}}
&\ar[l]^(.3){\be_{211}} x_{31}\\
&&\ar@<1ex>[lu]^{\be_{212}} x_{32}
}
\quad
\text{or more simply}
\quad
\xymatrix{
1 \ar@<1ex>[r]^{\al_{1}} &2 \ar[l]^{\be_{1}}\ar@<1ex>[r]^{\al_{2}}\ar[rd]^(.7){\al_{3}}
&\ar[l]^(.3){\be_{2}} 3\\
&&\ar@<1ex>[lu]^{\be_{3}} 4
}
$$
and the structure of projective indecomposables are as follows:
$$
\pmat{1\\2\\1} \pmat{&2&\\1&3&4\\&2&}
\pmat{3\\2\\4} \pmat{4\\2\\3}.
$$

\begin{exm}
For $\si=(1\ 2)(3\ 4)$, $Q_A$ and its projective indecomposables
are as follows:
$$
\vcenter{
\xymatrix{
1\ar@<.5ex>[r]\ar@<.5ex>[rd] & 3\ar@<.5ex>[l]\ar@<.5ex>[ld]\\
2\ar@<.5ex>[r]\ar@<.5ex>[ru]  & 4\ar@<.5ex>[l]\ar@<.5ex>[lu]
}};
\quad
\pmat{&1&\\3&&4\\&2&} \pmat{&2&\\3&&4\\&1&} \pmat{&3&\\1&&2\\&4&} \pmat{&4&\\1&&2\\&3&}.
$$
\end{exm}

\section*{Acknowledgments}
I would like to thank Bernhard Keller for answering my questions, in particular for giving me
an account of the construction of the orbit category $\C \sndorbit G$.
I would also like to thank Alex Dugas and Kiyoichi Oshiro for their questions, the former led me to smash products defined in \cite{C-M},
and the latter enabled me to include section \ref{Nak-perm}.
Finally, I would like to thank the referee for pointing out some careless mistakes
and for some suggestions to improve the paper.

\end{document}